\newtheorem{theorem}{Theorem}[section]
\newtheorem{lemma}{Lemma}[section]
\newtheorem{proposition}{Proposition}[section]
\newtheorem{condition}{Condition}[section]
\theoremstyle{definition}
\newtheorem{remark}{Remark}[section]
\newcommand{\R}{\mathbb{R}}
\newcommand{\Ep}{\mathbb{E}}
\renewcommand{\Pr}{\mathbb{P}}
\renewcommand{\tilde}{\widetilde}
\newcommand{\indep}{\mathop{\perp\!\!\!\!\perp}}
\renewcommand{\leq}{\leqslant}
\renewcommand{\geq}{\geqslant}
\begin{document}

\begin{frontmatter}

\title{Minimax Predictive Density\\ for Sparse Count Data}
\runtitle{Predictive density for sparse count data}

\begin{aug}
  \author{\fnms{Keisuke}  \snm{Yano}$^{1}$\corref{}\ead[label=e1]{yano@ism.ac.jp}},
  \author{\fnms{Ryoya} \snm{Kaneko}}$^{2}$\ead[label=e2]{ryykaneko@gmail.com}
  \and
  \author{\fnms{Fumiyasu} \snm{Komaki}}$^{3,4}$\ead[label=e3]{komaki@g.ecc.u-tokyo.ac.jp}

  \runauthor{K.~Yano, R.~Kaneko and F.~Komaki}

\affiliation{The Institute of Statistical Mathematics, The University of Tokyo, and RIKEN Center for Brain Science}
\address{
$^{1}$The Institute of Statistical Mathematics,\\
10-3 Midori cho, Tachikawa City, Tokyo, 190-8562, Japan\\
\printead{e1}}
\address{
$^{2}$Tokyo Marine Holdings, Inc.,\\
1-2-1 Marunouchi, Chiyoda-ku, Tokyo, 100-8050, Japan\\
\printead{e2}}
\address{
$^{3}$Department of Mathematical Informatics,\\
Graduate School of Information Science and Technology,\\
The University of Tokyo,\\
7-3-1 Hongo, Bunkyo-ku, Tokyo, 113-0033,Japan\\
\printead{e3}}

\address{
$^{4}$RIKEN Center for Brain Science,\\
2-1 Hirosawa, Wako City, Saitama,
351-0198, Japan}

\end{aug}

\begin{abstract}
This paper discusses predictive densities under the Kullback--Leibler loss for high-dimensional Poisson sequence models under sparsity constraints.
Sparsity in count data implies zero-inflation.
We present a class of Bayes predictive densities that attain asymptotic minimaxity in sparse Poisson sequence models.
We also show that our class with an estimator of unknown sparsity level plugged-in is adaptive in the asymptotically minimax sense.
For application,
we extend our results to settings with quasi-sparsity and with missing-completely-at-random observations.
The simulation studies as well as application to real data
illustrate the efficiency of the proposed Bayes predictive densities.
\end{abstract}

\begin{keyword}[class=MSC]
\kwd[Primary ]{62M20}
\kwd[; secondary ]{62C20, 62G20, 62P99}
\end{keyword}
\begin{keyword}
\kwd{Adaptation}
\kwd{High dimension}
\kwd{Kullback--Leibler divergence}
\kwd{Missing At Random}
\kwd{Poisson model}
\kwd{Zero inflation}
\end{keyword}

\end{frontmatter}

\maketitle

\section{Introduction}
\label{section: introduction}

Predictive density is a probability density of future observations on the basis of current observations.
It is used not only to estimate future observations but also to quantify their uncertainty.
It has a wide range of application in statistics, information theory, and machine learning.
The simplest class of predictive densities is the class of \textit{plug-in} predictive densities.
A plug-in predictive density is constructed by substituting an estimator into an unknown parameter of a statistical model.
Another class of predictive densities is the class of \textit{Bayes} predictive densities.
A Bayes predictive density is the posterior mixture of densities of future observations.
There is a vast literature on predictive density for statistical models in finite dimensions; 
see Subsection \ref{subsection: literature} for the literature review.
Conversely, little is known about predictive density for statistical models in high dimensions.
In prediction using sparse high-dimensional Gaussian models,
\cite{MukherjeeandJohnstone(2015),MukherjeeandJohnstone(2017)} construct
several predictive densities
(including a Bayes predictive density) superior to all plug-in predictive densities.

The aim of this paper is to construct an efficient predictive density for high-dimensional sparse count data.
The efficiency of a predictive density is measured by the supremum of the Kullback--Leibler risk under sparsity constraints.
Sparsity in count data means that there exhibits an excess of zeros.
See Subsection \ref{subsection: Problem setting} for the formulation.

The motivation for analyzing sparse count data is well-known.
In analyzing high-dimensional count data,
there often exhibits inflation of zeros.
Data with an overabundance of zeros include
samples from
agriculture \cite{Hall(2002)},
environmental sciences \cite{Agarwaletal(2002)},
manufacturing \cite{Lambert(1992)},
DNA sequencing \cite{DattaandDunson(2016)},
and terrorist attacks \cite{DattaandDunson(2016)}.
Another example (Japanese crime statistics) is presented in Section \ref{section: numerical experiments}.

\subsection{Problem setting and contributions}
\label{subsection: Problem setting}

We summarize main results with the problem formulation ahead.
Let $X_{i}$ ($i=1,2,\ldots,n$) be a current observation independently distributed according to $\mathrm{Po}(r\theta_{i})$,
and 
let $Y_{i}$ ($i=1,2,\ldots,n$) be a future observation independently distributed according to $\mathrm{Po}(\theta_{i})$,
where $\theta=(\theta_{1},\ldots,\theta_{n})$ is an unknown parameter and $r$ is a known constant.
Constant $r$ represents the ratio of the mean of the $i$-th ($i=1,\ldots,n$) current observation 
to that of the $i$-th future observation.
By sufficiency, this constant represents the ratio of sample sizes of current observations to those of future observations.
Suppose that $X=(X_{1},\ldots,X_{n})$ and $Y=(Y_{1},\ldots,Y_{n})$ are independent.
The densities of $X$ and $Y$ with parameter $\theta$ are denoted by $p(x\mid \theta)$ and $q(y\mid\theta)$, respectively:
\begin{align*}
p(x \mid \theta) = \prod_{i=1}^{n} \left\{ \frac{1}{x_{i}!}  \mathrm{e}^{-r\theta_{i}}(r\theta_{i})^{x_{i}}\right\} \text{ and }
q(y \mid \theta) = \prod_{i=1}^{n} \left\{\frac{1}{y_{i}!} \mathrm{e}^{- \theta_{i}} \theta_{i}^{y_{i}}\right\}.
\end{align*}
Our target parameter space is the exact sparse parameter space which is defined as follows.
Given $s \in (0,n) $,
$\Theta[s]:=\{\theta\in \R_{+}^{n}: \|\theta\|_{0} \leq s \}$, 
where $\|\cdot\|_{0}$ is the $\ell_{0}$-norm given by $\|\theta\|_{0}:=\#\{i: \theta_{i}>0\}$.

The performance of a predictive density $\hat{q}$ is evaluated by the Kullback--Leibler loss
\[
L(\theta, \hat{q}(\cdot ; x ) ) =
\sum_{y\in \mathbb{N}^{n}} q(y \mid \theta) \log \frac{q(y\mid \theta)}{\hat{q} (y ; x)}.
\]
The corresponding risk (expected loss) is denoted by 
\[
R(\theta,\hat{q}) = \sum_{x\in\mathbb{N}^{n}} \sum_{y\in \mathbb{N}^{n}} p(x\mid \theta) q(y \mid \theta) \log \frac{q(y\mid \theta)}{\hat{q} (y ; x)}.
\]
The minimax Kullback--Leibler risk over $\Theta[s]$ is defined as
\[
\mathcal{R}(\Theta[s]):= \mathcal{R}_{n}(\Theta[s]) =  \mathop{\inf}_{\hat{q}}\sup_{\theta\in\Theta[s]} R(\theta, \hat{q}).
\]

To express high-dimensional settings under sparsity constraints,
we employ the high dimensional asymptotics in which $n\to\infty$ and $\eta_{n}:=s/n=s_{n}/n\to 0$.
The value of $s$ possibly depends on $n$ and thus in what follows the dependence on $n$ is often expressed,
say, $s=s_{n}$.

Main theoretical contributions are summarized as follows:
\begin{itemize}
\item[(i)] In Theorem \ref{theorem: exact minimaxity within sparse Poisson models},
we identify the asymptotic minimax risk $\mathcal{R}(\Theta[s_{n}])$ 
and present a class of Bayes predictive densities attaining the asymptotic minimaxity;
\item[(ii)] In Theorem \ref{theorem: adaptive},
we present an asymptotically minimax predictive density that is adaptive to an unknown sparsity.
\end{itemize}
In Theorem \ref{theorem: exact minimaxity within sparse Poisson models}, we find that the sharp constant in the asymptotic minimax risk is controlled by the constant $r$.
This constant highlights the interesting parallel between Gaussian and Poisson decision theories as discussed in Subsection \ref{subsection: literature}.
In Theorem \ref{theorem: adaptive}, we show that a simple plug-in approach to choose the tuning parameter in the proposed class yields adaptive Bayes predictive densities.
In addition, we obtain the corresponding results for quasi sparse Poisson models and for settings where current observations are missing completely at random
in Section \ref{section: extensions}.
These extensions are important in applications.

The practical effectiveness of the proposed Bayes predictive densities is examined by both simulation studies and applications to real data in Section \ref{section: numerical experiments}. These studies show that the proposed Bayes predictive densities are effective in the sense of both predictive uncertainty quantification and point prediction.

The proposed class of predictive densities builds upon spike-and-slab prior distributions with improper slab priors.
Interestingly, spike-and-slab prior distributions with slab priors having exponential tails do not yield
asymptotically minimax predictive densities
as Proposition \ref{proposition: suboptimality of spike and slab} indicates.
The proposed predictive densities are not only asymptotically minimax 
but also easily implemented by exact sampling.

\subsection{Literature review}
\label{subsection: literature}

There is a rich literature on constructing predictive densities in fixed finite dimensions.
Bayes predictive densities have been shown to dominate plug-in predictive densities in several instances.
Studies of Bayes predictive densities date back to \cite{Aitchison(1975),Murray(1977),Akaike(1978),Ng(1980)}.
The first quantitative comparison of Bayes and plug-in predictive densities in a wide class of parametric models is \cite{Komaki(1996)}.
\cite{Komaki(1996)} showed that there exists a Bayes predictive density that 
dominates a plug-in predictive density under the Kullback--Leibler loss,
employing asymptotic expansions of Bayes predictive densities;
see also \cite{Hartigan(1998)} for asymptotic expansions of Bayes predictive densities.
Minimax Bayes predictive densities for unconstrained parameter spaces are studied in \cite{LiangandBarron(2004),Aslan(2006)}.
Minimax predictive densities under parametric constraints are studied in \cite{Fourdrinieretal(2011),Kubokawaetal(2013),Mouddenetal(2017)}.
Shrinkage priors for Bayes predictive densities under Gaussian models are investigated in
\cite{Komaki(2001),GeorgeLiangandXu(2006),KobayashiandKomaki(2008),MatsudaandKomaki(2015)};
see also \cite{Kato(2009),BoisbunonandMaruyama(2014),Fourdrinieretal(2019)} for the cases where the variances are unknown.
Shrinkage priors for Bayes predictive densities under Poisson models are developed in \cite{Komaki(2004),Komaki(2015)}.
The cases under $\alpha$-divergence losses are covered by \cite{CorcueraandGiummole(1999),SuzukiandKomaki(2010),MaruyamaandStrawderman(2012),Zhangetal(2018),LmouddenandMarchand(2019)}.

Relatively little is known about constructing predictive densities in high dimensions.
\cite{MukherjeeandJohnstone(2015),MukherjeeandJohnstone(2017)} construct an asymptotically minimax predictive density for sparse Gaussian models.
\cite{XuandLiang(2010)} obtained an asymptotically minimax predictive density for nonparametric Gaussian regression models under Sobolev constraints;
thereafter, \cite{YanoandKomaki(2017)} obtained an adaptive minimax predictive density for these models.
See also \cite{XuandZhou(2011)}.
All above results employ Gaussian likelihood and the corresponding results for count data have been not known.

Poisson models deserve study in their own right as prototypical count data modeling (\cite{Robbins(1956),ClevensonandZidek(1975),JohnstoneandMacGibbon(1992),Komaki(2004),BrownGreenshteinRitov(2013)}).
Poisson models exhibit several correspondences to Gaussian models.
\cite{ClevensonandZidek(1975),Johnstone(1984),JohnstoneandMacGibbon(1992),MacGibbon(2010)} find
the correspondence in estimation of means
using the re-scaled squared loss
defined as $\sum_{i=1}^{n}\theta_{i}^{-1}(\theta_{i}-\hat{\theta}_{i}(X))^{2}$.
\cite{GhoshandYang(1988),Komaki(2004),Komaki(2006JMVA),Komaki(2015)} find the correspondence in prediction using the Kullback--Leibler loss.
In particular, \cite{JohnstoneandMacGibbon(1992),MacGibbon(2010)} find the correspondence in
the asymptotic minimaxity under ellipsoidal and rectangle constraints in high-dimensional Poisson models using the re-scaled squared loss (the local Kullback--Leibler loss).
In spite of the interesting correspondence in \cite{JohnstoneandMacGibbon(1992),MacGibbon(2010)},
the re-scaled squared loss is not compatible with sparsity:
the loss diverges if $\theta_{i}=0$ and $\hat{\theta}_{i}(X)\neq 0$ for at least one index $i$.

Employing the Kullback--Leibler divergence,
this paper presents the results of asymptotic minimaxity in both estimation and prediction for sparse Poisson models, which are clearly parallel to the result for sparse Gaussian models by \cite{MukherjeeandJohnstone(2015)}; see Subsection \ref{subsection: discussion} for detailed discussions.
This paper also covers several new topics in predictive density under sparsity constraints: the adaptation to sparsity, quasi-sparsity, and missing completely at random.

Our strategy leverages spike-and-slab priors.
In the literature, it is known that the choice of slab priors impacts on the statistical optimality
\cite{JohnstoneandSilverman(2004),RockovaandGeorge(2018),CastilloandMismer(2018),CastilloandSzabo(2019)}.
But, the behavior has been studied only for (sub-)Gaussian models and the corresponding results for Poisson models have remained unavailable.
In Proposition \ref{proposition: suboptimality of spike and slab},
we show that slab priors with tails as heavy as the exponential distribution suffer from the minimax sub-optimality.
In Proposition \ref{proposition: optimality of Cauchy or Pareto},
we also show that polynomially decaying slabs can attain the minimax optimality.

Relatively scarce are theoretical studies of zero-inflated or quasi zero-inflated Poisson models in high dimensions in spite of their importance.
\cite{DattaandDunson(2016)} constructs global-local shrinkage priors for high-dimensional quasi zero-inflated Poisson models. The constructed priors have good theoretical properties of the shrinkage factors and of the multiple testing statistics. 
We confirm in Section \ref{section: numerical experiments} that our priors broadly outperform their priors in predictive density,
which indicates our priors are more suitable for prediction.
In contrast, we consider that their priors would be more suitable than our priors in multiple testing or in interpreting shrinkage factors.
We shall also mention that in this direction, interesting and powerful extensions of \cite{DattaandDunson(2016)} are now available in \cite{Hamuraetal}.
Appendix \ref{Appendix: supplemental experiments} in the supplementary material provides the comparison of our predictive density to the Bayes predictive density based on the prior in \cite{Hamuraetal}.

\subsection{Organization and notation}

The rest of the paper is organized as follows.
In Section \ref{section: predictive density estimation in sparse Poisson models},
we present an asymptotically minimax predictive density and an adaptive minimax predictive density for sparse Poisson models,
which is the main result in this paper.
In Section \ref{section: extensions},
we present several extensions of the main result.
In Section \ref{section: numerical experiments},
we conduct simulation studies and present application to real data.
In Section \ref{section: proofs for the main theorems},
we give proofs of main theorems (Theorems 
\ref{theorem: exact minimaxity within sparse Poisson models} and \ref{theorem: adaptive}).
In Section \ref{section: proofs for auxiliary lemmas},
we provide proofs of auxiliary lemmas used in Section \ref{section: proofs for the main theorems}.
All proofs of propositions in Section \ref{section: predictive density estimation in sparse Poisson models} are
given in Appendix \ref{section: proofs for propositions in section 2} of the supplementary material.
All proofs of propositions in Section \ref{section: extensions} are 
given in Appendix \ref{section: proofs for propositions in section 3} of the supplementary material.

Throughout the paper, we will use the following notations.
The notation $a_{n}\sim b_{n}$ signifies that
$a_{n}/b_{n}$ converges to 1 as $n$ goes to infinity.
The notation $O(a_{n})$ indicates a term of which the absolute value divided by $a_{n}$ is bounded for a large $n$.
The notation $o(a_{n})$ indicates a term of which the absolute value divided by $a_{n}$ goes to zero in $n$.
For a function $f:\mathbb{N}^{n}\times \mathbb{N}^{n}\to\R$,
the expectation $\Ep_{\theta}[f(X,Y)]$ indicates the expectation of $f(X,Y)$ with respect to $p(x\mid\theta)q(y\mid\theta)$.
Likewise,
for a function $g:\mathbb{N}\to\R$,
the expectation $\Ep_{\lambda}[g(X_{1})]$ indicates the expectation of $g(X_{1})$ with respect to $\mathrm{Po}(\lambda)$.
Constants  $c_{1},c_{2},\ldots$
and $C_{1},C_{2},\ldots$ do not depend on $n$. Their values may be different at each appearance.

\section{Predictive density for sparse Poisson models}
\label{section: predictive density estimation in sparse Poisson models}

\subsection{Main results}

This section presents main results for prediction using sparse Poisson models:
the precise description of the asymptotic minimax risk;
the construction of the class of asymptotically minimax predictive densities;
and
that of adaptive minimax predictive densities.
Detailed discussions are provided in the subsequent subsection.
Proofs of the theorems are presented in Section \ref{section: proofs for the main theorems}.

The first theorem describes the asymptotic minimax risk as well as the Bayes predictive density attaining the asymptotic minimaxity.
For $r\in (0,\infty)$,
let \[\mathcal{C}:=\mathcal{C}_{r}=\left(\frac{r}{r+1}\right)^{r}\left(\frac{1}{r+1}\right).\]
For $h>0$ and $\kappa>0$, let $\Pi[h,\kappa]$ be an improper prior of the form
\begin{align*}
    \Pi[h,\kappa](d\theta) = \prod_{i=1}^{n}\left\{\delta_{0}(d\theta_{i}) + h\theta_i^{\kappa-1}1_{(0,\infty)}(d\theta_{i})\right\},
\end{align*}
where $\delta_{0}$ is the Dirac measure centered at $0$.
\begin{theorem}
\label{theorem: exact minimaxity within sparse Poisson models}
Fix $r\in (0,\infty)$ and fix a sequence $s_{n}\in(0,n)$ such that $\eta_{n}=s_{n}/n=o(1)$.
Then, the following holds:
\begin{align*}
\mathcal{R}(\Theta[s_{n}])\sim \mathcal{C}s_{n}\log(\eta^{-1}_{n}) \text{ as $n\to\infty$.}
\end{align*}
Further,  the predictive density $q_{\Pi[\eta_{n},\kappa]}$ based on $\Pi[\eta_{n},\kappa]$ with $\kappa>0$ is
asymptotically minimax: i.e.,
\begin{align*}
\sup_{\theta\in\Theta[s_{n}]}R( \theta , q_{\Pi[ \eta_{n},\kappa]}) &\sim \mathcal{R}(\Theta[s_{n}])
 \text{ as $n\to\infty$.}
\end{align*}
\end{theorem}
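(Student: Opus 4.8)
The plan is to prove the two halves — the asymptotic value of $\mathcal{R}(\Theta[s_n])$ and the asymptotic minimaxity of $q_{\Pi[\eta_n,\kappa]}$ — by sandwiching the minimax risk between matching upper and lower bounds, with the upper bound supplied by the Bayes predictive density itself. This reduces both statements to a single chain of inequalities
\[
(1-o(1))\,\mathcal{C}s_n\log(\eta_n^{-1}) \;\le\; \mathcal{R}(\Theta[s_n]) \;\le\; \sup_{\theta\in\Theta[s_n]}R(\theta, q_{\Pi[\eta_n,\kappa]}) \;\le\; (1+o(1))\,\mathcal{C}s_n\log(\eta_n^{-1}).
\]

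\textbf{Upper bound (the main work).} First I would establish that the risk of $q_{\Pi[h,\kappa]}$ decouples over coordinates: because both the likelihood $p(x\mid\theta)$ and the prior $\Pi[h,\kappa]$ are product measures, the Bayes predictive density factorizes as $q_{\Pi[h,\kappa]}(y;x) = \prod_{i=1}^n q_{\pi[h,\kappa]}(y_i;x_i)$ with $\pi[h,\kappa](d\theta_i) = \delta_0(d\theta_i) + h\theta_i^{\kappa-1}1_{(0,\infty)}(d\theta_i)$, and the KL risk is additive: $R(\theta,q_{\Pi[h,\kappa]}) = \sum_{i=1}^n \rho(\theta_i)$ where $\rho(\lambda) := R_1(\lambda, q_{\pi[h,\kappa]})$ is the one-dimensional Poisson predictive risk. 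The supremum over $\Theta[s_n]$ is then at most $s_n \sup_{\lambda\ge 0}\rho(\lambda) - (s_n - \#\{\text{nonzero}\})\cdot(-\rho(0))$; more carefully, since $\Theta[s_n]$ allows at most $s_n$ nonzero coordinates, $\sup_{\theta\in\Theta[s_n]}R(\theta,q_{\Pi[h,\kappa]}) = (n-s_n)\rho(0) + s_n\sup_{\lambda> 0}\rho(\lambda)$ provided $\rho(0)\le\sup_\lambda\rho(\lambda)$, which I would check holds. With $h=\eta_n$, the crux is the two estimates: (a) $\rho(0) = o(\eta_n\log(\eta_n^{-1}))$, so the contribution of the $n-s_n$ null coordinates is $o(s_n\log(\eta_n^{-1}))$; and (b) $\sup_{\lambda>0}\rho(\lambda) \le (1+o(1))\,\mathcal{C}\log(\eta_n^{-1})$. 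For (a), at $\lambda=0$ the future observation $Y_i=0$ a.s.\ and $X_i=0$ a.s., so $\rho(0) = -\log q_{\pi[\eta_n,\kappa]}(0;0)$; I would show the marginal $m(x_i) = \sum_{y_i}$-type normalization gives $q_{\pi[\eta_n,\kappa]}(0;0) = 1 - O(\eta_n)$ because the slab contributes only an $O(\eta_n)$ perturbation, yielding $\rho(0) = O(\eta_n)$. For (b), the slab being improper with shape $\kappa$ means the posterior on a nonzero coordinate is a Gamma-type law, and $q_{\pi[\eta_n,\kappa]}(y_i;x_i)$ is a mixture of a point mass effect and a negative-binomial-type density; the worst-case risk over $\lambda$ is driven by moderately large $\lambda$, where the "cost of detection" scales like $\log(\eta_n^{-1})$ and the sharp constant $\mathcal{C}_r=(r/(r+1))^r(1/(r+1))$ emerges as the normalizing constant of the Poisson($r\theta$)/Poisson($\theta$) predictive pair — essentially $\mathcal{C}_r$ is the value of the relevant ratio at the mode. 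I would prove (b) by splitting $\lambda\in(0,\lambda^*]$ and $\lambda>\lambda^*$ with a threshold like $\lambda^* \asymp \log(\eta_n^{-1})$, bounding the small-$\lambda$ range directly (where the predictive density is close to the true one up to an $O(\log(\eta_n^{-1}))$ penalty for the spike mis-specification) and the large-$\lambda$ range by a Laplace/saddle-point estimate on the marginal likelihood. This is the main obstacle: getting the constant exactly $\mathcal{C}_r$ and not merely $O(\log(\eta_n^{-1}))$ requires a delicate asymptotic analysis of the one-dimensional predictive risk uniformly in $\lambda$, and I expect the paper's auxiliary lemmas in Section~\ref{section: proofs for auxiliary lemmas} to carry exactly this load.

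\textbf{Lower bound.} For $\mathcal{R}(\Theta[s_n]) \ge (1-o(1))\mathcal{C}s_n\log(\eta_n^{-1})$ I would use the standard Bayesian reduction: the minimax risk is bounded below by the Bayes risk against any prior supported on $\Theta[s_n]$. The natural choice is a product prior that makes each coordinate $0$ with probability $1-\eta_n$ and, with probability $\eta_n$, equal to a spike at a well-chosen nonzero value $\mu_n$ (or a small distribution near $\mu_n$) — a "sparse two-point prior" analogous to the Gaussian construction of \cite{MukherjeeandJohnstone(2015)}. One must ensure the prior is (asymptotically) supported on $\Theta[s_n]$, i.e.\ the number of nonzero coordinates is $s_n(1+o(1))$ with high probability, which follows from a binomial concentration argument; a small correction (conditioning on at most $s_n$ nonzeros, or inflating $s_n$ by a vanishing factor) handles the tail. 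The Bayes risk against this prior then equals $n$ times a one-dimensional mixture-predictive risk, and the informational lower bound comes from the fact that distinguishing $\theta_i=0$ from $\theta_i=\mu_n$ based on $X_i\sim\mathrm{Po}(r\theta_i)$ costs, in KL-predictive terms, $\mathcal{C}_r\log(\eta_n^{-1})(1+o(1))$ per active coordinate when $\mu_n$ is tuned near $\log(\eta_n^{-1})$. I would make this precise via a lower bound on the one-dimensional Bayes predictive risk by, e.g., restricting the KL loss to the event $\{Y_i=0\}$ or $\{X_i=0\}$ and relating it to a testing error, then optimizing over $\mu_n$. The matching of the two constants — the $\mathcal{C}_r$ from the upper-bound saddle-point and the $\mathcal{C}_r$ from the lower-bound two-point prior — is what pins down the sharp asymptotics, and I would present it as the culmination after the one-dimensional lemmas are in place.
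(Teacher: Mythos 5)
Your sandwich strategy and the coordinate-wise reduction for the upper bound match the paper's proof structure: $q_{\Pi[h,\kappa]}$ factorizes, the risk adds up, and $\sup_{\Theta[s_n]}R \le (n-s_n)\rho(0) + s_n\sup_{\lambda>0}\rho(\lambda)$ is exactly the split they use, with the two target bounds $\rho(0)=O(\eta_n)$ and $\sup_\lambda\rho(\lambda)\le(\mathcal{C}+o(1))\log\eta_n^{-1}$ stated correctly. Three points diverge from what the paper actually does, and two of them matter.

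First, for the lower bound the paper does not use an i.i.d.\ Bernoulli--Dirac mixture. It uses a \emph{block-independent} prior: $\{1,\ldots,n\}$ is cut into $s_n$ blocks of length $m_n=\lfloor\eta_n^{-1}\rfloor$, and each block carries exactly one spike at a uniformly random coordinate. This prior is supported \emph{exactly} in $\Theta[s_n]$, so there is no binomial concentration or truncation step to argue; your route (which mirrors Mukherjee--Johnstone's Gaussian construction) would also work but pays an extra cost that the block construction avoids for free.

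Second, your scaling intuition for the spike is wrong in the Poisson setting. You suggest tuning $\mu_n$ near $\log\eta_n^{-1}$, which is the Gaussian reflex ($\sqrt{2\log\eta_n^{-1}}$); in the Poisson problem the Bayes risk per active block comes out as $\{e^{-r\nu}-e^{-(r+1)\nu}\}\log m_n$, so the $\log\eta_n^{-1}$ factor is contributed entirely by the number of candidate positions $m_n$, while the \emph{optimal spike level is the constant} $\nu^\circ=\log(1+1/r)$, and maximizing the prefactor yields precisely $\mathcal{C}_r$. Since you say you would optimize over $\mu_n$, this would self-correct, but it is a genuine place where Gaussian intuition fails to transfer.

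Third, the technical device that carries the sharp constant in the upper bound is not a saddle-point/Laplace analysis of the marginal likelihood. It is the Komaki-type integral identity (Lemma~\ref{lemma: Kullback--Leibler formula})
\[
R(\theta,q_\Pi)=\int_r^{r+1}\frac{R_{\mathrm{e}}\bigl(t\theta,\,t\hat\theta_\Pi(\cdot;t)\bigr)}{t}\,dt,
\]
which converts the predictive KL risk into an integral of estimative KL risks computed from the posterior mean, which for $\Pi[\eta_n,\kappa]$ is available in closed form. The three-regime bounds on $\hat\theta_{\Pi}(X_i;t)$ (the paper's (P1)--(P3)) then isolate the $X_i=0$ contribution as the dominant term $\{e^{-r\lambda}-e^{-(r+1)\lambda}\}\log\eta_n^{-1}$, whose $\lambda$-maximum gives $\mathcal{C}_r\log\eta_n^{-1}$ directly. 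You correctly anticipate that auxiliary lemmas carry this load, but without this specific identity the route through a ``saddle-point on the marginal'' would be substantially harder to make uniform in $\lambda$.
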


The derivation of this theorem consists of (i) establishing a lower bound of $\mathcal{R}(\Theta[s_{n}])$ 
based on the Bayes risk maximization, and (ii) establishing an upper bound of it based on the Bayes predictive density $q_{\Pi[\eta_n , \kappa]}$.

The first theorem provides asymptotically minimax strategies, 
but the optimal strategies therein require the true value of $\eta_{n}$.
The second theorem presents the optimal strategies without requiring the true value of $s_{n}$, that is,
adaptive minimax predictive densities for sparse Poisson models.
Let $\hat{s}_{n}:=\max\{1,\#\{ i:X_{i}\geq 1 ,i=1,\ldots,n \}\}$, and let $\hat{\eta}_{n}:=\hat{s}_{n}/n$.
\begin{theorem}
\label{theorem: adaptive}
Fix $r\in(0,\infty)$ and $\kappa>0$.
Then, the predictive density $q_{\Pi[\hat{\eta}_{n},\kappa]}$ is adaptive in the asymptotically minimax sense on the class of exact sparse parameter spaces: i.e.,
for any sequence $s_{n}\in[1,n)$ such that $\sup_{n} s_{n} / n < 1$ and $\eta_{n} = s_{n}/n = o(1)$,
\begin{align*}
    \sup_{\theta\in\Theta[s_{n}]}R(\theta,q_{\Pi[\hat{\eta}_{n},\kappa]})
    &\sim \mathcal{R}(\Theta[s_{n}]) \text{  as }n\to\infty.
\end{align*}
\end{theorem}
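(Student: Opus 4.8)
Our plan is to reduce everything to Theorem~\ref{theorem: exact minimaxity within sparse Poisson models} by comparing the data-driven predictive density $q_{\Pi[\hat\eta_n,\kappa]}$ directly with the oracle $q_{\Pi[\eta_n,\kappa]}$. The lower bound $\sup_{\theta\in\Theta[s_n]}R(\theta,q_{\Pi[\hat\eta_n,\kappa]})\geq\mathcal R(\Theta[s_n])$ is automatic, so it suffices to prove $\sup_{\theta\in\Theta[s_n]}R(\theta,q_{\Pi[\hat\eta_n,\kappa]})\leq(1+o(1))\mathcal C s_n\log\eta_n^{-1}$. Since $\Pi[h,\kappa]$ is a product measure, for every $x$ the Bayes predictive density factorizes, $q_{\Pi[h,\kappa]}(y;x)=\prod_{i=1}^n q_{h,\kappa}(y_i;x_i)$ with $q_{h,\kappa}$ the one-dimensional Bayes predictive density, and substituting $h=\hat\eta_n(x)$ preserves this, so $R(\theta,q_{\Pi[\hat\eta_n,\kappa]})$ is a sum of $n$ one-dimensional terms. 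Two structural facts will be used repeatedly. First, $X_i=0$ almost surely when $\theta_i=0$, so $\hat s_n\leq\|\theta\|_0\leq s_n$ and hence $\hat\eta_n\leq\eta_n$ pointwise on $\Theta[s_n]$. Second, a short computation shows that for $x_i\geq1$ the atom of $\Pi[h,\kappa]$ at $0$ is annihilated and the factor $h$ cancels, so $q_{h,\kappa}(\cdot;x_i)$ is a fixed negative binomial law independent of $h$, whereas $q_{h,\kappa}(\cdot;0)=w_0(h)\,\delta_0+(1-w_0(h))\,\mathrm{NB}_\kappa$ with $w_0(h)=(1+h\Gamma(\kappa)r^{-\kappa})^{-1}$ and $\mathrm{NB}_\kappa$ a fixed negative binomial law; this makes the $h$-dependence completely explicit.

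Writing $A_0=\{i:\theta_i=0\}$, $A_1=\{i:\theta_i>0\}$, and $\ell(t,h):=\sum_{y}\mathrm{Po}(y;t)\log\{\mathrm{Po}(y;t)/q_{h,\kappa}(y;0)\}$, I would compare term by term. On $A_0$ the per-coordinate risk is $\Ep_\theta[-\log q_{\hat\eta_n,\kappa}(0;0)]$; since $h\mapsto-\log q_{h,\kappa}(0;0)$ is increasing and $\hat\eta_n\leq\eta_n$, the $A_0$-part of $R(\theta,q_{\Pi[\hat\eta_n,\kappa]})-R(\theta,q_{\Pi[\eta_n,\kappa]})$ is nonpositive. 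On $A_1$ the contribution of $\{X_i\geq1\}$ is identical for the two predictive densities by the cancellation noted above, so
\[
R(\theta,q_{\Pi[\hat\eta_n,\kappa]})-R(\theta,q_{\Pi[\eta_n,\kappa]})\leq T':=\sum_{i\in A_1}e^{-r\theta_i}\,\Ep_\theta\!\left[\ell(\theta_i,\hat\eta_n)-\ell(\theta_i,\eta_n)\mid X_i=0\right].
\]
From the explicit form of $q_{h,\kappa}(\cdot;0)$, using $\hat\eta_n\leq\eta_n$ and discarding the favorable contribution of the mass at $y=0$, one obtains the key inequality $\ell(t,\hat\eta_n)-\ell(t,\eta_n)\leq(1-e^{-t})\log(\eta_n/\hat\eta_n)$, whence $T'\leq\sum_{i\in A_1}g(\theta_i)\,\Ep_\theta[\log(\eta_n/\hat\eta_n)\mid X_i=0]$ with $g(t):=e^{-rt}(1-e^{-t})$.

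The crux is then to show $T'=o(s_n\log\eta_n^{-1})$. Since $\eta_n/\hat\eta_n=s_n/\hat s_n$ and, conditionally on $X_i=0$, $\hat s_n=\max\{1,N_{-i}\}$ with $N_{-i}=\#\{j\neq i:X_j\geq1\}$ independent of $X_i$ and $N_{-i}\geq N-1$ for $N=\#\{j:X_j\geq1\}$, one gets $\Ep_\theta[\log(\eta_n/\hat\eta_n)\mid X_i=0]\leq\Ep_\theta[\log(s_n/\hat s_n)]+\log2$. The weight satisfies $g(t)\leq C_r(1-e^{-rt})$ for a constant $C_r$ depending only on $r$ (the ratio $g(t)/(1-e^{-rt})$ has finite limits at $0$ and $\infty$), so $\sum_{i\in A_1}g(\theta_i)\leq C_r m_n$ with $m_n:=\Ep_\theta[N]=\sum_{i\in A_1}(1-e^{-r\theta_i})\leq s_n$, giving $T'\leq C_r m_n(\Ep_\theta[\log(s_n/\hat s_n)]+\log2)$. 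A Chernoff bound for the Bernoulli sum $N$ yields $\Pr_\theta(N\leq m_n/2)\leq e^{-m_n/8}$, hence $\Ep_\theta[\log(s_n/\hat s_n)]\leq\log(s_n/\max\{1,m_n/2\})+e^{-m_n/8}\log s_n$; combining this with $m_n\leq s_n$, the elementary bound $\sup_{0<u\leq1}u\log(2/u)=2/e$, and $\sup_{x\geq0}xe^{-x/8}<\infty$ gives $T'=O(s_n)$, which is $o(s_n\log\eta_n^{-1})$ because $\eta_n=o(1)$. Finally Theorem~\ref{theorem: exact minimaxity within sparse Poisson models} gives $R(\theta,q_{\Pi[\eta_n,\kappa]})\leq\sup_{\theta'\in\Theta[s_n]}R(\theta',q_{\Pi[\eta_n,\kappa]})=(1+o(1))\mathcal C s_n\log\eta_n^{-1}$ uniformly over $\theta\in\Theta[s_n]$, so $\sup_{\theta\in\Theta[s_n]}R(\theta,q_{\Pi[\hat\eta_n,\kappa]})\leq(1+o(1))\mathcal C s_n\log\eta_n^{-1}+O(s_n)=(1+o(1))\mathcal R(\Theta[s_n])$, as required.

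The step I expect to be the main obstacle is the bound on $T'$. The difficulty is that $\hat\eta_n$ can lie far below the oracle level $\eta_n$ when many signals are weak, and the predictive density then pays $\log(\eta_n/\hat\eta_n)$ at every $Y_i\geq1$; what rescues the argument is that this penalty is weighted by $g(\theta_i)=e^{-r\theta_i}(1-e^{-\theta_i})\lesssim 1-e^{-r\theta_i}=\Pr_\theta(X_i\geq1)$, which is small exactly in the regime ($\theta_i$ small) that also forces $\hat s_n$ to be small, and turning this heuristic cancellation into the clean estimate $T'=O(s_n)$ is precisely what the Chernoff bound and the concavity inequality $\sup_u u\log(2/u)=2/e$ accomplish. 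A minor additional point to verify is that $q_{\Pi[\hat\eta_n,\kappa]}$ is a genuine predictive density, i.e. for each $x$ the product $\prod_i q_{\hat\eta_n(x),\kappa}(\cdot;x_i)$ is a probability mass function on $\mathbb N^n$, which is immediate since $\hat\eta_n(x)>0$.
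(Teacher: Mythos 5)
Your proof is correct, and it reaches the conclusion by a route that is genuinely different from---and in some respects cleaner than---the paper's. The paper compares $q_{\Pi[\hat\eta_n,\kappa]}$ with $q_{\Pi[\eta_n,\kappa]}$ coordinate by coordinate exactly as you do, with the $\{X_i\ge1\}$ contribution vanishing for both, but it then bounds the crucial quantity $T_i=\Ep_\theta[1_{X_i=0,\,Y_i\ge1}\log(s_n/\hat s_n)]$ by \emph{dropping} the factor coming from $\Pr(X_i=0)$, keeping only $\Pr(Y_i\ge1)\le1-\mathrm e^{-\theta_{[1]}}$, and then splitting into a ``sparse'' regime $s_n=o(n^{1/2})$ (where the crude bound $\log(s_n/\hat s_n)\le\log s_n$ already suffices) and a ``dense'' regime, which in turn is split according to whether $\theta_{[1]}$ is below or above $1/\sqrt{\log s_n}$; the latter subcase needs the technically delicate property (c) of Lemma~\ref{lemma: properties of hats_n}, proved via a layer-cake representation and Hoeffding's inequality. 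You instead retain the full coordinate-wise weight $g(\theta_i)=\mathrm e^{-r\theta_i}(1-\mathrm e^{-\theta_i})$, observe the elementary but key inequality $g(\theta_i)\le C_r(1-\mathrm e^{-r\theta_i})$, so that $\sum_{i\in A_1}g(\theta_i)\le C_r m_n$ with $m_n=\Ep_\theta[N]$, and then exploit the cancellation between the weight $m_n$ and the penalty $\log(s_n/\hat s_n)$ via one Chernoff lower-tail bound and $\sup_u u\log(2/u)=2/\mathrm e$, getting $T'=O(s_n)$ uniformly over $\Theta[s_n]$ with no case split on $s_n$ or on $\theta_{[1]}$. This buys a shorter and more uniform argument that does not need Lemma~\ref{lemma: properties of hats_n}(c) at all; it also gives the noise coordinates for free (your monotonicity observation that $\hat\eta_n\le\eta_n$ a.s.\ and $h\mapsto-\log q_{h,\kappa}(0;0)$ is increasing makes that contribution nonpositive, whereas the paper settles for the $O(n\eta_n)$ bound from Lemma~\ref{lemma: properties of hats_n}(b)). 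A small side benefit is that you isolate the explicit inequality $\ell(t,\hat\eta_n)-\ell(t,\eta_n)\le(1-\mathrm e^{-t})\log(\eta_n/\hat\eta_n)$ by discarding the favorable $y=0$ mass, rather than the paper's four-case bookkeeping; both amount to the same information, but your presentation makes the role of each factor transparent. The one place where the paper's approach is somewhat more general is Lemma~\ref{lemma: properties of hats_n}, which is reused verbatim in the estimation analogue (Proposition~\ref{theorem: adaptive minimaxity for estimation}) and in the MCAR extension; your $m_n$-based argument would need to be redone there, though it would likely go through with minor changes.
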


The derivation builds upon evaluating the difference between two Kullback--Leibler risks 
$R ( \theta , q_{\Pi[\eta_{n},\kappa]} ) $ and $ R ( \theta , q_{\Pi[\hat{\eta}_{n},\kappa]} ) $.
We will show this difference is negligible uniformly in $\theta$ compared to the minimax risk.
To check this, we use three properties of $\hat{s}_{n}$:
\begin{itemize}
\item The estimate $\hat{s}_{n}$ is bounded below by an absolute constant;
\item The first and the second moments of $|\hat{s}_{n}/s_{n}-1|$ are bounded above by an absolute constant;
\item The estimate $\hat{s}_{n}$ can capture nearly the correct growth rate of $s_{n}$ in the relatively dense regime, whenever the true value of $\theta$ is outside a vicinity of 0. Specifically, we will see that 
\[\sup_{\theta : \max_{i} \theta_i > 1/\sqrt{\log s_{n}} }\Ep_{\theta}[\log s_{n}/\hat{s}_{n}] = O(\sqrt{\log s_{n}})\text{ as $s_{n}\to\infty$}.\]
\end{itemize}
The first and the second properties make the difference negligible in the relatively sparse regime.
The third property makes the difference negligible in the relatively dense regime.
See Subsection \ref{subsection: proof of theorem adaptive} for the detail.

\subsection{Discussions}
\label{subsection: discussion}

Several discussions are provided in order.

\subsubsection{Prediction and estimation, Poisson and Gaussian}
\label{subsubsection: prediction and estimation}

\textit{Prediction and estimation}:
For comparison, let us consider estimating $\theta$ under the Kullback--Leibler risk
$R_{\mathrm{e}}(\theta,\hat{\theta}) := R(\theta, q(\cdot \mid \hat{\theta}))$ as in \cite{Deledalle(2017)}.
The minimax risk $\mathcal{E}(\Theta[s_{n}])$ for estimation is defined in such a way that
$\mathcal{E}(\Theta[s]):=  \mathop{\inf}_{ \hat{\theta} }\sup_{\theta\in\Theta[s]} R_{\mathrm{e}}(\theta, \hat{\theta})$.
Since the minimax risk $\mathcal{E}(\Theta[s_{n}])$ for estimation can be viewed as the minimax risk for prediction when predictive densities are restricted to plug-in predictive densities,
we always have $\mathcal{E}(\Theta[s_{n}]) \ge \mathcal{R}(\Theta[s_{n}])$.

The first proposition describes the asymptotic minimax risk for estimation.
This proposition highlights a gap between $\mathcal{E}(\Theta[s_{n}])$ and $\mathcal{R}(\Theta[s_{n}])$.
The second proposition indicates that the same data-dependent prior as in Theorem \ref{theorem: adaptive} yields an adaptive minimax estimator.
\begin{proposition}
\label{proposition: exact minimaxity for estimation}
Fix $r\in (0,\infty)$ and fix a sequence $s_{n}\in (0,n)$ such that $\eta_{n}= s_{n} / n = o(1)$. 
Then, the following holds:
\begin{align*}
\mathcal{E}(\Theta[s_{n}]) \sim \mathrm{e}^{-1} r^{-1} s_{n}\log(\eta^{-1}_{n})
\text{ as }n\to\infty.
\end{align*}
\end{proposition}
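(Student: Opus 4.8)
\emph{Proof plan.} The proof follows the same two-step scheme as Theorem~\ref{theorem: exact minimaxity within sparse Poisson models}: a lower bound obtained by Bayes risk maximization, and a matching upper bound obtained from an explicit estimator. The starting point is that the estimation loss is additive across coordinates,
\[
R_{\mathrm{e}}(\theta,\hat\theta)=\sum_{i=1}^{n}\Ep_{\theta}\bigl[\theta_{i}\log(\theta_{i}/\hat\theta_{i}(X))-\theta_{i}+\hat\theta_{i}(X)\bigr],
\]
so, exactly as for the predictive loss, the whole problem reduces to the one-dimensional task of estimating $\lambda\geq0$ from $X_{1}\sim\mathrm{Po}(r\lambda)$ under the Poisson divergence $\lambda\log(\lambda/\hat\theta)-\lambda+\hat\theta$. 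The one substantive difference from Theorem~\ref{theorem: exact minimaxity within sparse Poisson models} is that, when the signal has height $\lambda$ and the estimator's guess on $\{X_{1}=0\}$ is of order $\eta_{n}$, the leading term of the one-dimensional risk is $\lambda\,\mathrm{e}^{-r\lambda}\log\eta_{n}^{-1}$ (the factor $\lambda$ coming from the coefficient of the logarithmic term in the Poisson divergence), whereas for prediction the corresponding object is governed by $\mathrm{e}^{-r\lambda}(1-\mathrm{e}^{-\lambda})$. Since $\sup_{\lambda>0}\lambda\,\mathrm{e}^{-r\lambda}=1/(\mathrm{e}r)$ is attained at the \emph{constant} value $\lambda=1/r$ (rather than at a value tracking $\log\eta_{n}^{-1}$, as in the Gaussian case), this produces the sharp constant $\mathrm{e}^{-1}r^{-1}$ in place of $\mathcal{C}_{r}$.

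For the upper bound I would take the plug-in estimator $\hat\theta_{i}=\eta_{n}$ if $X_{i}=0$ and $\hat\theta_{i}=X_{i}/r$ if $X_{i}\geq1$ (the posterior mean of $\Pi[\eta_{n},\kappa]$ would serve equally well). A direct computation of the one-dimensional risk at a generic $\lambda\geq0$, using $\Ep_{r\lambda}[X_{1}]=r\lambda$ and standard estimates for $\Ep_{r\lambda}[\log X_{1}\,;X_{1}\geq1]$ — which keep the ``$X_{1}\geq1$'' part of the risk bounded for every $\lambda$ and drive it to the finite limit $1/(2r)$ as $\lambda\to\infty$ — shows that this risk is at most $\mathrm{e}^{-r\lambda}\lambda\log\eta_{n}^{-1}+C$ for an absolute constant $C$, hence at most $(\mathrm{e}r)^{-1}\log\eta_{n}^{-1}+C'$ uniformly in $\lambda\geq0$; a coordinate with $\theta_{i}=0$ contributes exactly $\eta_{n}$. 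Summing over the at most $s_{n}$ nonzero coordinates and the $n$ zero ones and using $n\eta_{n}=s_{n}$ together with $\log\eta_{n}^{-1}\to\infty$ yields $\sup_{\theta\in\Theta[s_{n}]}R_{\mathrm{e}}(\theta,\hat\theta)\leq\mathrm{e}^{-1}r^{-1}s_{n}\log\eta_{n}^{-1}(1+o(1))$.

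For the lower bound I would bound $\mathcal{E}(\Theta[s_{n}])$ below by the Bayes risk of a prior concentrated on $\Theta[s_{n}]$: draw $S$ uniformly among the subsets of $\{1,\dots,n\}$ of size $\lfloor s_{n}\rfloor$, and set $\theta_{i}=1/r$ for $i\in S$ and $\theta_{i}=0$ otherwise. Because both the likelihood and the loss factorize, the Bayes act is, coordinatewise, the posterior mean $\Ep[\theta_{i}\mid X]$: it equals $1/r$ on $\{X_{i}\geq1\}$ (zero loss there), and on $\{X_{i}=0\}$ it equals $(1/r)\Pr(\theta_{i}=1/r\mid X)=(1/r)(\lfloor s_{n}\rfloor-|L|)/(n-|L|)$, where $|L|=\#\{j:X_{j}\geq1\}\sim\mathrm{Bin}(\lfloor s_{n}\rfloor,1-\mathrm{e}^{-1})$. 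Concentration of $|L|$ shows this guess is $(1+o(1))\eta_{n}\mathrm{e}^{-1}r^{-1}$ uniformly over the relevant $X$, so the per-coordinate Bayes risk is $O(\eta_{n})$ at a zero coordinate and $\eta_{n}\mathrm{e}^{-1}r^{-1}(\log\eta_{n}^{-1}+O(1))$ at a nonzero one; summing over the $n$ coordinates gives a Bayes risk, hence a lower bound for $\mathcal{E}(\Theta[s_{n}])$, equal to $(1+o(1))\mathrm{e}^{-1}r^{-1}s_{n}\log\eta_{n}^{-1}$. (An equivalent route uses an i.i.d.\ $\mathrm{Bernoulli}(\eta_{n}(1-\delta_{n}))$ prior on the signal locations with $\delta_{n}\to0$ slowly, truncated to $\Theta[s_{n}]$; the truncation error is then absorbed by a binomial tail bound after flooring $\hat\theta$ away from $0$.)

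I expect the main obstacle to be the lower-bound bookkeeping: making the approximation $\Pr(\theta_{i}=1/r\mid X_{i}=0)\approx\eta_{n}\mathrm{e}^{-1}$ precise and uniform despite the weak dependence introduced by fixing the support size exactly, and checking that every $O(1)$ per-coordinate remainder is genuinely $o(\log\eta_{n}^{-1})$ — which is exactly the place where the hypothesis $\eta_{n}\to0$, i.e.\ $\log\eta_{n}^{-1}\to\infty$, is used. The reduction to one dimension is immediate from additivity of the loss, and the uniform bound $(\mathrm{e}r)^{-1}\log\eta_{n}^{-1}+O(1)$ on the one-dimensional plug-in risk is routine once the elementary estimates for $\Ep_{r\lambda}[\log X_{1}]$ are recorded.
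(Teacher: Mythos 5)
Your proof is correct and follows the same two-step scheme as the paper, with the upper bound essentially identical (posterior mean of $\Pi[\eta_n,\kappa]$, analyzed exactly as in Step~2 of Theorem~\ref{theorem: exact minimaxity within sparse Poisson models}) and a genuine difference in the lower-bound prior. You draw the support uniformly among subsets of size $\lfloor s_n\rfloor$ and fix all spike heights at $1/r$; the paper instead reuses the block-independent prior $\Pi_{\mathrm{B},\nu}$ (one spike per contiguous block of length $m_n=\lfloor\eta_n^{-1}\rfloor$, with the height $\nu$ optimized afterwards). The block construction buys exactly what you flag as the obstacle in your route: because the blocks are independent, the posterior mean on $\{X_i=0\}$ is \emph{exactly} $\nu/m_n$, so the Bayes risk computes in closed form as $s_n\nu\,\mathrm{e}^{-r\nu}\log m_n$, no concentration of $|L|$ is needed, and maximizing over $\nu$ then produces $\nu=1/r$ and the constant $(\mathrm{e}r)^{-1}$ — the step you fold into the choice $\theta_i\equiv1/r$. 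Your route also works, and in fact your worry about making $\Pr(\theta_i=1/r\mid X_i=0)\approx\eta_n\mathrm{e}^{-1}$ precise is unnecessary: the deterministic one-sided bound $\hat\theta_i(X)=(1/r)(\lfloor s_n\rfloor-|L|)/(n-|L|)\leq\lfloor s_n\rfloor/(nr)\leq\eta_n/r$ on $\{X_i=0\}$ already forces a per-signal loss of at least $r^{-1}\log\eta_n^{-1}-O(1)$ on the probability-$\mathrm{e}^{-1}$ event $\{X_i=0\}$, which is all the lower bound needs. One small bookkeeping slip: the quantity $\eta_n\mathrm{e}^{-1}r^{-1}(\log\eta_n^{-1}+O(1))$ you describe as the per-coordinate risk ``at a nonzero one'' is the marginal per-coordinate Bayes risk averaged over the random support; summing it over all $n$ coordinates gives the right total, but summing it over only the $s_n$ signal coordinates, as the phrasing suggests, would lose a factor of $\eta_n$.
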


\begin{proposition}
\label{theorem: adaptive minimaxity for estimation}
Fix $r\in(0,\infty)$ and $\kappa>0$.
Then, the Bayes estimator $\hat{\theta}_{\Pi[\hat{\eta}_{n},\kappa]}$ is adaptive in the asymptotically minimax sense on the class of exact sparse parameter spaces:
for any sequence $s_{n}\in[1,n)$ such that $\sup_{n} s_{n} / n < 1$ and $\eta_{n}=s_{n}/n = o(1)$, we have
\begin{align*}
    \sup_{\theta\in\Theta[s_{n}]}R_{\mathrm{e}}(\theta,\hat{\theta}_{\Pi[\hat{\eta}_{n},\kappa]})
    &\sim \mathcal{E}(\Theta[s_{n}]) \text{  as }n\to\infty.
\end{align*}
\end{proposition}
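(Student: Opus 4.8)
The plan is to mirror the proof strategy announced for Theorem~\ref{theorem: adaptive}, substituting the plug-in predictive density $q(\cdot\mid\hat\theta_{\Pi[\eta,\kappa]})$ for the Bayes predictive density $q_{\Pi[\eta,\kappa]}$ throughout. The starting point is Proposition~\ref{proposition: exact minimaxity for estimation}, which supplies the target rate $\mathcal{E}(\Theta[s_n])\sim \mathrm e^{-1}r^{-1}s_n\log(\eta_n^{-1})$; since $\mathcal E(\Theta[s_n])\geq \mathcal R(\Theta[s_n])$ always, it suffices to show that plugging $\hat\eta_n$ into the prior produces an estimator whose worst-case risk is asymptotically no larger than $\mathcal E(\Theta[s_n])$. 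In parallel, one expects that the oracle estimator $\hat\theta_{\Pi[\eta_n,\kappa]}$ (with the true $\eta_n$) is itself asymptotically minimax for estimation; establishing that --- presumably a byproduct of the proof of Proposition~\ref{proposition: exact minimaxity for estimation}, or a short computation analogous to part~(ii) of Theorem~\ref{theorem: exact minimaxity within sparse Poisson models} --- is the first step. Then the whole problem reduces to bounding the risk gap
\begin{align*}
    \Delta_n(\theta) := R_{\mathrm e}\bigl(\theta,\hat\theta_{\Pi[\hat\eta_n,\kappa]}\bigr) - R_{\mathrm e}\bigl(\theta,\hat\theta_{\Pi[\eta_n,\kappa]}\bigr)
\end{align*}
uniformly in $\theta\in\Theta[s_n]$, and showing $\sup_\theta \Delta_n(\theta) = o\bigl(s_n\log(\eta_n^{-1})\bigr)$.

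The key steps would be, first, to obtain a tractable expression for the one-coordinate Bayes estimator $\hat\theta_{\Pi[\eta,\kappa]}(x)$ as a function of $x$ and of the slab weight $\eta$ (a posterior-mean computation against the spike-and-slab prior; the slab being $h\theta^{\kappa-1}$, the per-coordinate posterior is a mixture of a point mass at $0$ and a Gamma, so the estimator is explicit), and then to quantify its sensitivity to $\eta$, i.e.\ to bound $\partial_\eta \log$ of the relevant quantities or directly $|\hat\theta_{\Pi[\eta',\kappa]}(x)-\hat\theta_{\Pi[\eta,\kappa]}(x)|$ and the resulting change in $R_{\mathrm e}$ in terms of $|\eta'/\eta - 1|$ and $|\log(\eta'/\eta)|$. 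Second, I would split $\Theta[s_n]$ into the ``relatively sparse'' regime $\{\max_i\theta_i \leq 1/\sqrt{\log s_n}\}$ and the ``relatively dense'' regime $\{\max_i\theta_i > 1/\sqrt{\log s_n}\}$, exactly as flagged in the discussion after Theorem~\ref{theorem: adaptive}, and invoke the three stated properties of $\hat s_n$: its absolute lower bound, the boundedness of the first two moments of $|\hat s_n/s_n - 1|$, and the control $\sup_{\theta:\max_i\theta_i>1/\sqrt{\log s_n}}\Ep_\theta[\log s_n/\hat s_n]=O(\sqrt{\log s_n})$. In the sparse regime the moment bounds on $|\hat\eta_n/\eta_n-1|$ feed into the sensitivity estimate from step one to make $\Delta_n$ negligible; in the dense regime the logarithmic-ratio control does the same, because there the minimax rate $s_n\log(\eta_n^{-1})$ is itself large enough to absorb an $O(\sqrt{\log s_n})$-per-coordinate discrepancy.

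The main obstacle I anticipate is the estimator-sensitivity bound in the dense regime near coordinates where $x_i=0$: for small $\eta$ the spike-and-slab Bayes estimate at $x=0$ is of order $\eta$ times a constant, so its logarithm is $\log\eta + O(1)$, and the Kullback--Leibler estimation loss $R_{\mathrm e}$ has a term like $\theta_i\log(\theta_i/\hat\theta_i)$ that is linear in $\log\hat\theta_i$ and hence linear in $\log\hat\eta_n$. This is precisely why the third property of $\hat s_n$ is phrased as a bound on $\Ep_\theta[\log s_n/\hat s_n]$ rather than on a moment of $\hat s_n/s_n$: one needs to control the expected logarithmic underestimation of the sparsity level, and the $1/\sqrt{\log s_n}$ threshold is exactly what guarantees (via a Poisson tail / coupon-collector-type argument on the number of nonzero $X_i$) that $\hat s_n$ does not fall catastrophically below $s_n$ on an event of non-negligible probability. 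Carefully combining this with a matching upper-bound direction (overestimation of $\hat\eta_n$, which only mildly inflates the slab weight and costs at most $O(1)$ per coordinate in KL loss) to get a two-sided $o\bigl(s_n\log(\eta_n^{-1})\bigr)$ control over all of $\Theta[s_n]$ is the crux; the rest is bookkeeping parallel to Subsection~\ref{subsection: proof of theorem adaptive}.
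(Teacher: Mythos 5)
Your plan matches the paper's proof essentially point for point: reduce to the uniform risk gap $\Delta_n(\theta)=R_{\mathrm e}(\theta,\hat\theta_{\Pi[\hat\eta_n,\kappa]})-R_{\mathrm e}(\theta,\hat\theta_{\Pi[\eta_n,\kappa]})$ using the fact that the oracle estimator already achieves the minimax rate from Proposition~\ref{proposition: exact minimaxity for estimation}, exploit the explicit spike-and-slab posterior mean (and in particular that it is independent of $\eta$ once $x_i\ge 1$, so only the $\{X_i=0\}$ events matter), bound sensitivity through $|\hat s_n/s_n-1|$ and $\log(s_n/\hat s_n)$, and split $\Theta[s_n]$ at $\theta_{[1]}\gtrless 1/\sqrt{\log s_n}$ to invoke Lemma~\ref{lemma: properties of hats_n} parts (a)--(c). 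You also correctly identified the crux --- the linear-in-$\log\hat\eta_n$ term coming from $\theta_i\log(\theta_i/\hat\theta_i)$ at $x_i=0$ and why property (c) must be a bound on $\Ep_\theta[\log(s_n/\hat s_n)]$ --- so this is the same argument as Subsection~\ref{subsection: proof of adaptive minimax estimation}.
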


\begin{figure}[h]
\begin{minipage}{0.49\hsize}
\hspace{-0.4cm}
\includegraphics[width=6cm]{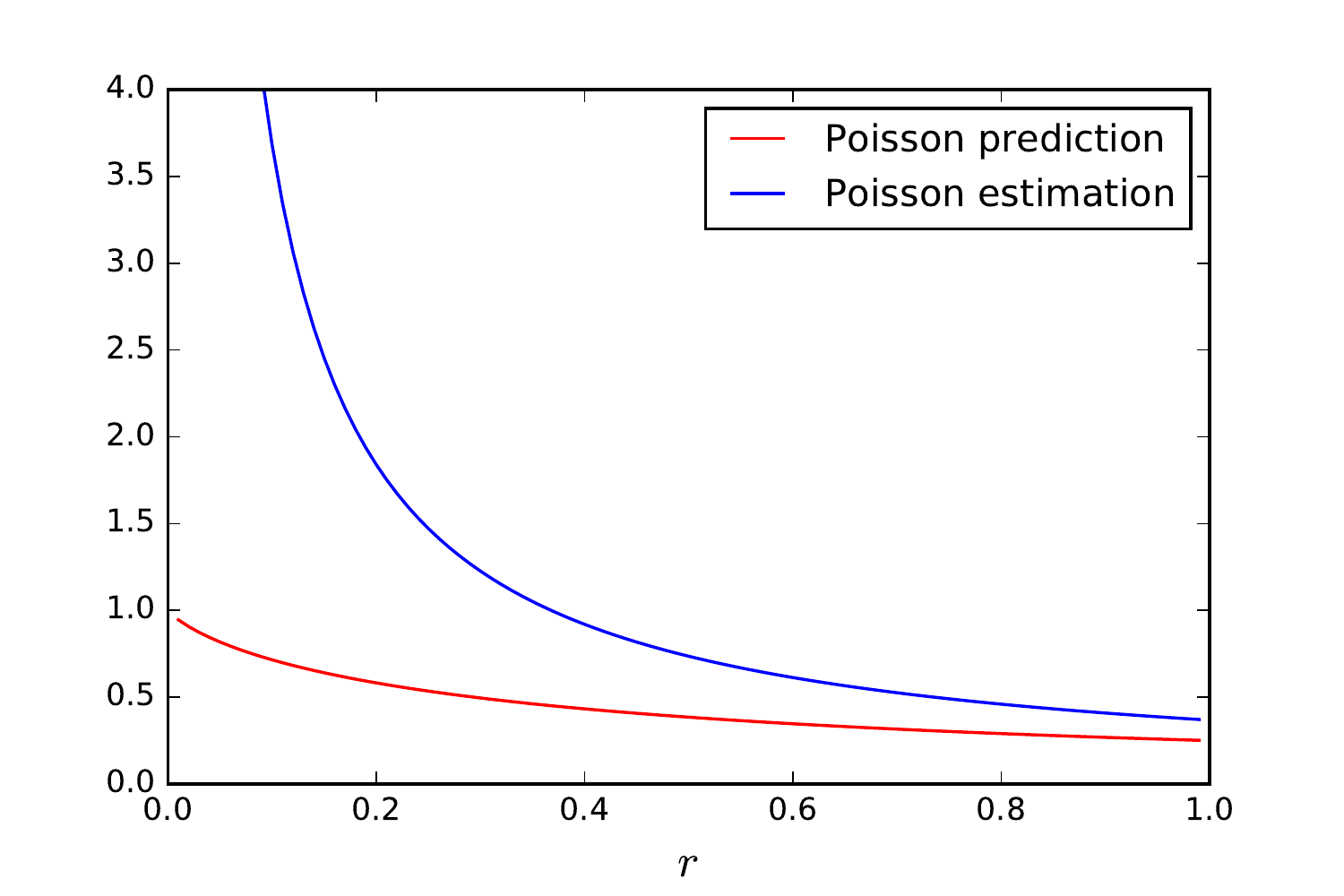}
\caption{Predictive and estimative \,\,\,\protect\linebreak minimax risks for sparse Poisson \,\,\,\protect\linebreak models: the holizontal axis represents $r$.}
\label{figure: exact constant comparison Poisson}
\end{minipage}
\begin{minipage}{0.49\hsize}
\hspace{-0.4cm} \includegraphics[width=6cm]{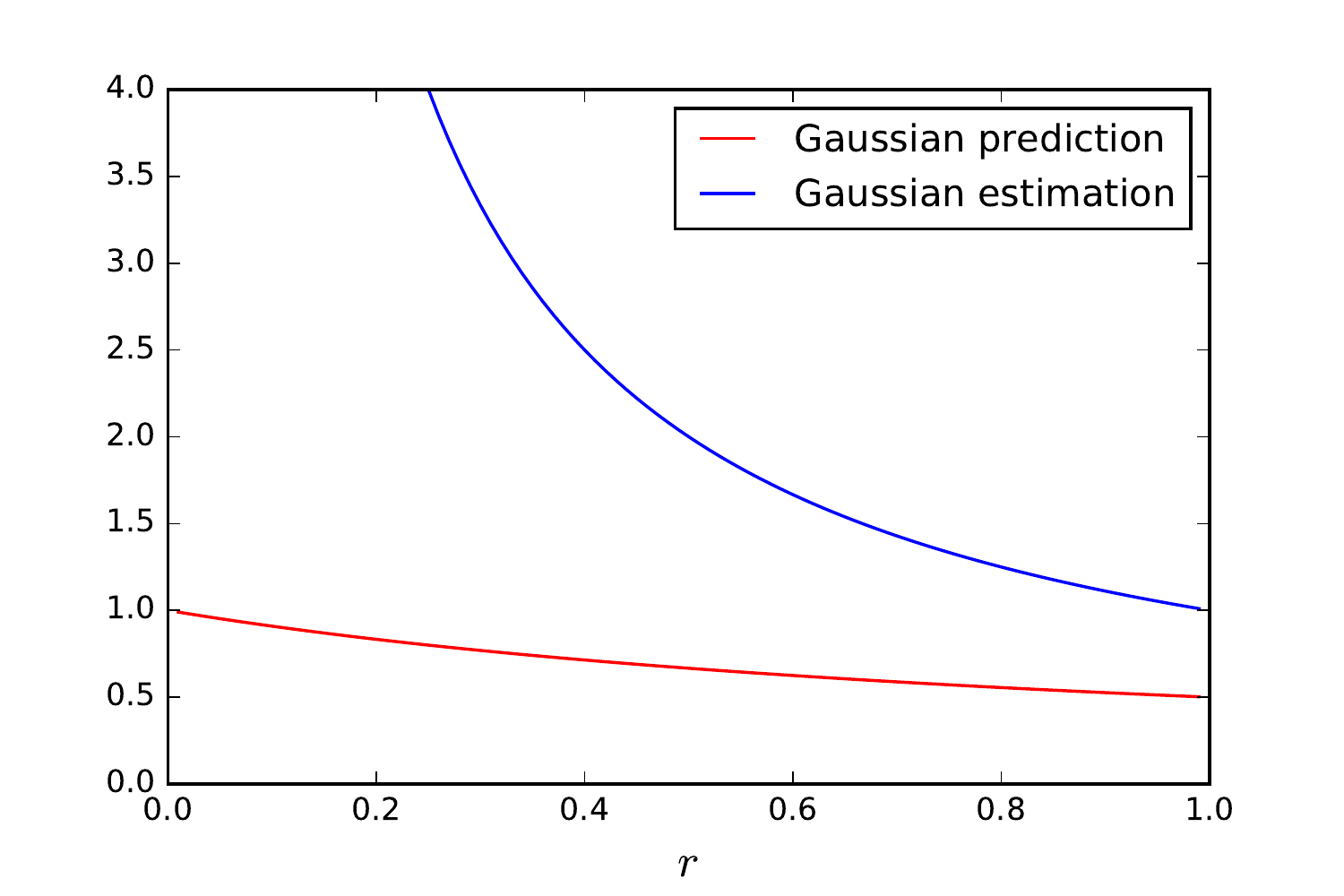}
\caption{Predictive and estimative \,\,\,\protect\linebreak minimax risks for sparse Gaussian \,\,\,\protect\linebreak models: the holizontal axis represents $r$.}
\label{figure: exact constant comparison Gaussian}
\end{minipage}
\end{figure}

According to Theorem \ref{theorem: exact minimaxity within sparse Poisson models} and Proposition \ref{proposition: exact minimaxity for estimation},
the rates (with respect to $n$) of minimax risks for estimation and for prediction are identical.
But, the sharp constants of these minimax risks are different with $r$.
The sharp constant of $\mathcal{R}(\Theta[s_{n}])$ (, i.e., $\mathcal{C}$,) increases as $r$ decreases but remains bounded above by $1$,
while that of $\mathcal{E}(\Theta[s_{n}])$ (, i.e., $\mathrm{e}^{-1}r^{-1}$) grows to infinity as $r$ decreases.
Further, $\mathcal{C}\sim \mathrm{e}^{-1}r^{-1}$ as $r$ increases.

\textit{Poisson and Gaussian}:
\cite{MukherjeeandJohnstone(2015)} shows the asymptotic minimax risk for prediction using sparse Gaussian models
is equal to $\{1/(1+r)\}s_{n}\log \eta_{n}^{-1}$ with $r$ the ratio of sample sizes of current observations to those of future observations.
Comparing our results with \cite{MukherjeeandJohnstone(2015)},
we find interesting similarities between sparse Gaussian and sparse Poisson models.
First, the rates with respect to $n$ of these two problems are identical to $s_{n}\log \eta_{n}^{-1}$.
Second, 
Figures \ref{figure: exact constant comparison Poisson} and \ref{figure: exact constant comparison Gaussian} show the comparisons of the exact constants of minimax risks for sparse Poisson and Gaussian models.
The vertical line indicates values of the risks and the horizontal line indicates values of $r$.
They show the similarity of the behavior with respect to $r$ of minimax risks in Poisson and Gaussian cases.
An interesting observation in comparison of Poisson and Gaussian cases is
that the exact constants of predictive minimax risks in both cases get closer to 1 as $r$ approaches to 0.

\subsubsection{Spike-and-slab priors}

\textit{Computation}:
Let us mention a computational advantage of using improper slab priors ahead.
Bayes predictive densities often suffer from computational intractability because they may involve several numerical integrations.
Using improper slab priors, we can avoid such a computational issue in our set-up.
In fact,
the Bayes predictive density based on $\Pi[h,\kappa]$ has the explicit form
\begin{align*}
&q_{\Pi[h,\kappa]}(y\mid x) \\
&= \prod_{i=1}^{n}
\left\{ \omega_{i} \delta_{0}(y_{i}) 
+ (1-\omega_{i}) \begin{pmatrix} x_{i}+y_{i} +\kappa -1 \\ y_{i} \end{pmatrix} \left(\frac{r}{r+1}\right)^{x_{i}+\kappa}\left(1-\frac{r}{r+1}\right)^{y_{i}} \right\},
\end{align*}
where
\begin{align*}
\omega_{i}:=
\begin{cases}
1\big{/}\big{\{}1+h \Gamma(\kappa)/r^{\kappa}\big{\}} & \text{ if } x_{i}=0, \\
0 & \text{ if }x_{i}\geq 1.
\end{cases}
\end{align*}
The coordinate-wise marginal distribution of $q_{\Pi[h,\kappa]}$ is a zero-inflated negative binomial distribution
and thus sampling from $q_{\Pi[h,\kappa]}$ is easy.


\textit{Condition on slab priors}:
Spike and slab priors has been recently well-investigated in sparse Gaussian models;
see \cite{JohnstoneandSilverman(2004),RockovaandGeorge(2018),CastilloandMismer(2018),CastilloandSzabo(2019)}.
The existing results for sparse Gaussian models are summarized as follows:
\begin{itemize}
\item For point estimation, a slab prior with its tail at least as heavy as Laplace distribution yields
a rate-optimal point estimator of the sparse mean (see \cite{JohnstoneandSilverman(2004)});
\item For uncertainty quantification, Laplace slab prior yields a rate sub-optimal posterior $\ell^{2}$-moment but Cauchy slab prior yields a rate-optimal one (see \cite{CastilloandMismer(2018)}).
\end{itemize}
Here we derive both necessary and sufficient conditions for the minimax prediction in sparse Poisson models.

Consider the following condition on a prior $\Pi$: For the posterior mean $\hat{\theta}_{\Pi}$, the asymptotic equation
\begin{align}
\frac{|\hat{\theta}_{\Pi,i}(x) - x_{i}/r |}{x_{i}/r} \to 0 \text{ as }x_{i}\to\infty
\label{weak tail robustness}
\end{align}
holds for all $i$.
This condition is a Poisson variant of tail-robustness of the posterior mean \cite{Carvalhoetal(2010)}.
\cite{Hamuraetal} studies this condition and a stronger condition in the context of the global-local shrinkage.
The next proposition implies Condition (\ref{weak tail robustness}) is necessary for the minimax prediction.

\begin{proposition}
\label{proposition: suboptimality of spike and slab}
Fix a sequence $s_{n}\in (0,n)$ such that $\eta_{n}= s_{n} / n = o(1)$.
If a spike-and-slab prior does not satisfy (\ref{weak tail robustness}), then we have
\begin{align*}
\sup_{\theta\in \Theta[s_{n}]} R(\theta, q_{\Pi}) \Big{/}
\mathcal{R}(\Theta[s_{n}]) \to\infty \text{ as $n\to\infty$}.
\end{align*}
\end{proposition}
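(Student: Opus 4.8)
The plan is to show that a spike-and-slab prior violating \eqref{weak tail robustness} produces a predictive density whose worst-case risk over $\Theta[s_n]$ is of strictly larger order than $\mathcal{C}s_n\log(\eta_n^{-1})$. The key observation is that failure of \eqref{weak tail robustness} means there exist $\varepsilon>0$ and a sequence $x_i\to\infty$ along which $|\hat\theta_{\Pi,i}(x)-x_i/r|\geq \varepsilon x_i/r$; equivalently, the coordinatewise posterior mean $\hat\theta_{\Pi,i}$ either over-shrinks large observations or inflates them by a constant factor. I would pick a single ``active'' coordinate (say $i=1$) and a parameter $\theta$ supported there with a large spike $\theta_1=t$, where $t=t_n\to\infty$ is chosen so that the typical value of $X_1\sim\mathrm{Po}(rt)$ lands in the bad region; the remaining $n-1$ coordinates are set to $0$, so that this $\theta$ lies in $\Theta[s_n]$ for every $s_n\geq 1$. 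By additivity of the Kullback--Leibler risk over coordinates, $R(\theta,q_\Pi)\geq R_1(\theta_1,q_{\Pi,1})$, the contribution of the first coordinate alone.

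The core of the argument is a lower bound on the single-coordinate predictive risk when the predictive density badly mis-centers. For a Poisson future observation $Y_1\sim\mathrm{Po}(\theta_1)$ with $\theta_1=t$ large, the Bayes predictive density $q_{\Pi,1}(\cdot\mid x_1)$ coming from the slab is (up to the spike weight, which is negligible once $x_1\geq 1$) a negative binomial centred at roughly $\hat\theta_{\Pi,1}(x_1)$ rather than at $t$. I would use the standard fact that, for $Y_1$ concentrated near its mean $t$, the local KL loss $L_1(\theta_1,\hat q_1)$ is bounded below by a term of order $(\hat\theta_{\Pi,1}(x_1)-t)^2/t$ plus a term growing like $\log$ of the variance mismatch; the linear-in-$x_1$ discrepancy guaranteed by the failure of \eqref{weak tail robustness} forces $|\hat\theta_{\Pi,1}(X_1)-t|$ to be of order $t$ with non-vanishing probability (since $X_1/r$ concentrates at $t$), hence $L_1\gtrsim t$ on an event of probability bounded away from zero. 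Taking expectations, $R_1(t,q_{\Pi,1})\gtrsim t = t_n$. Choosing $t_n$ to grow faster than $\log(\eta_n^{-1})$ — which is always possible since $t_n$ is at our disposal and $\log(\eta_n^{-1})$ can grow arbitrarily slowly relative to a freely chosen divergent sequence, while still keeping the bad-region hypothesis in force — gives $\sup_{\theta\in\Theta[s_n]}R(\theta,q_\Pi)\geq R_1(t_n,q_{\Pi,1})\gg \mathcal{C}s_n\log(\eta_n^{-1})\sim\mathcal{R}(\Theta[s_n])$, which is the claim; here I invoke Theorem \ref{theorem: exact minimaxity within sparse Poisson models} for the denominator.

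The main obstacle is making the single-coordinate KL lower bound quantitative and uniform: one must control the predictive density $q_{\Pi,1}(y_1\mid x_1)$ — a mixture over the slab posterior, not literally a negative binomial unless the slab is exactly $\Pi[h,\kappa]$ — well enough to show that a constant-factor misplacement of its center costs $\Omega(t)$ in KL loss against a $\mathrm{Po}(t)$ truth, and to do so using only the qualitative hypothesis \eqref{weak tail robustness} rather than a specific slab shape. A secondary subtlety is the interplay of the two free sequences: one needs that for every bad sequence $x_i\to\infty$ there is a companion $t_n\to\infty$ with $rt_n$ in the relevant range and with $t_n/\log(\eta_n^{-1})\to\infty$; since the proposition fixes $s_n$ (hence $\eta_n$) but leaves $\theta$ arbitrary in $\Theta[s_n]$, this is a matter of choosing $t_n$ carefully — for instance $t_n$ a slowly growing sequence tending to infinity faster than any prescribed $\log(\eta_n^{-1})$ is not automatic, so one instead argues that if the ratio failed to diverge the hypothesis could be used at a fixed large level to already beat the rate. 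I would handle this by first proving the clean statement ``$\sup_t R_1(t,q_{\Pi,1})=\infty$'' and then extracting the divergence of the ratio by a routine diagonal/subsequence argument.
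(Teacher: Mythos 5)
Your overall architecture matches the paper's: fix a single active coordinate, send $\theta_i\to\infty$, use the fact that failure of (\ref{weak tail robustness}) forces a constant-factor misalignment of the posterior mean, conclude that the single-coordinate risk grows linearly in $\theta_i$, and then diverge the ratio by taking $\theta_i$ arbitrarily large (which is permissible since $\Theta[s_n]$ has no magnitude cap). You also invoke Theorem~\ref{theorem: exact minimaxity within sparse Poisson models} to pin the denominator, as the paper implicitly does.

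The gap, which you correctly flag as the ``main obstacle'' but do not close, is precisely where the paper's argument pivots. You propose to show directly that a misplaced predictive density $q_{\Pi,1}(\cdot\mid x_1)$ incurs KL loss $\gtrsim(\hat\theta_{\Pi,1}(x_1)-t)^2/t$ against $\mathrm{Po}(t)$, reasoning by analogy with negative binomials. This is not available for a generic slab, and controlling a posterior-mixed Poisson well enough to make this rigorous, under only the qualitative hypothesis that (\ref{weak tail robustness}) fails, is genuinely hard and nowhere resolved in your sketch. The paper avoids this completely by invoking Lemma~\ref{lemma: Kullback--Leibler formula}, which converts the predictive KL risk into an exact integral of estimation-type risk,
\begin{align*}
R(\theta,q_\Pi)=\int_r^{r+1}\frac{R_{\mathrm{e}}(t\theta,t\hat\theta_\Pi(\cdot;t))}{t}\,dt
=\int_r^{r+1}\Ep_{t\theta}\sum_{j}\left[\theta_j\log\frac{\theta_j}{\hat\theta_{\Pi,j}(X;t)}-\theta_j+\hat\theta_{\Pi,j}(X;t)\right]\frac{dt}{t}.
\end{align*}
For the single active coordinate this reduces the integrand to $\theta_i\{\log(\theta_i/\hat\theta)+\hat\theta/\theta_i-1\}$, where the bracketed term $u-1-\log u$ is nonnegative and vanishes only at $u=1$. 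Failure of (\ref{weak tail robustness}) keeps $\hat\theta/\theta_i$ bounded away from $1$ along a subsequence with non-vanishing probability, so the bracket is bounded away from $0$ and $R(\theta,q_\Pi)/\theta_i$ is bounded below by a positive constant. This is the step your sketch is missing: without Lemma~\ref{lemma: Kullback--Leibler formula} (or an equivalent reduction of predictive KL to estimation KL), the required single-coordinate lower bound remains unproved. I would suggest reorganizing your proof to route through that lemma rather than attempting a direct analysis of $q_{\Pi,1}$.
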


Though Condition (\ref{weak tail robustness}) is not sufficient,
it becomes a useful criterion for checking the sub-optimality of a given predictive density.
For example, a spike-and-slab prior with an exponentially decaying slab (e.g., Laplace slab) does not satisfy (\ref{weak tail robustness}) and hence yields a sub-optimal predictive density.

Consider a spike-and-slab prior $\Pi_{\gamma}[\eta] := \prod_{i=1}^{n} \{ (1-\eta) \delta_{0}(d\theta_{i}) + \eta \gamma(\theta_{i})d\theta_{i} \}$
with a slab density $\gamma$.
Make the following conditions on $\gamma$:
\begin{align}
\sup_{\lambda>0}\Big| \lambda \frac{d}{d\lambda} \log \gamma(\lambda)  \Big| = \Lambda < \infty.
\label{condition on slab}
\end{align}
and
\begin{align}
\int_{0}^{\infty} \mathrm{e}^{-\lambda}\gamma(\lambda) d\lambda < \infty.
\label{condition integrability}
\end{align}
By the fundamental theorem of calculus,
Condition (\ref{condition on slab}) implies $c_{1}\lambda^{-\Lambda} \le \gamma(\lambda) \le C_{1} \lambda^{\Lambda}$ with some $c_{1},C_{1}>0$.
Condition (\ref{condition integrability}) is a posterior integrability condition and is satisfied by all proper priors as well as ours.
A class of slabs satisfying (\ref{condition on slab}) and (\ref{condition integrability}) includes
half-Cauchy, Pareto, and regularly varying priors with index -1 discussed in \cite{MaruyamaandTakemura(2008),Hamuraetal}.
We can easily check that spike-and-slab priors with slabs satisfying (\ref{condition on slab}) and (\ref{condition integrability})
meet Condition (\ref{weak tail robustness}) and 
can show they yield the optimal predictive densities.

\begin{proposition}
\label{proposition: optimality of Cauchy or Pareto}
Fix $r\in(0,\infty)$ and fix a sequence $s_{n}\in (0,n)$ such that $\eta_{n}= s_{n} / n = o(1)$.
Let $\Pi_{\gamma}[\eta_{n}]$ be a spike-and-slab prior with $\gamma$ satisfying (\ref{condition on slab}) and (\ref{condition integrability})
Then, the predictive density $q_{\Pi_{\gamma}[\eta_{n}]}$ based on $\Pi_{\gamma}[\eta_{n}]$ is asymptotically minimax:
i.e.,
\begin{align*}
    \sup_{\theta\in\Theta[s_{n}]}R(\theta,q_{\Pi_{\gamma}[\eta_{n}]})
    &\sim \mathcal{R}(\Theta[s_{n}]) \text{  as }n\to\infty.
\end{align*}
\end{proposition}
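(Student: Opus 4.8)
By Theorem~\ref{theorem: exact minimaxity within sparse Poisson models} we have $\mathcal{R}(\Theta[s_{n}])\sim\mathcal{C}s_{n}\log(\eta_{n}^{-1})$, and trivially $\sup_{\theta\in\Theta[s_{n}]}R(\theta,q_{\Pi_{\gamma}[\eta_{n}]})\geq\mathcal{R}(\Theta[s_{n}])$, so it suffices to prove the matching upper bound, and the plan is to follow the upper-bound part of the proof of Theorem~\ref{theorem: exact minimaxity within sparse Poisson models}, keeping track of where the general slab $\gamma$ enters. Since the prior $\Pi_{\gamma}[\eta_{n}]$ and the sampling model both factorize over coordinates, $q_{\Pi_{\gamma}[\eta_{n}]}(y\mid x)=\prod_{i=1}^{n}q_{\gamma,\eta_{n}}(y_{i}\mid x_{i})$, where $q_{\gamma,\eta_{n}}$ is the one-coordinate Bayes predictive density attached to the prior $(1-\eta_{n})\delta_{0}+\eta_{n}\gamma$; Condition~\eqref{condition integrability}, together with the polynomial growth of $\gamma$ implied by \eqref{condition on slab}, makes all the relevant normalizing constants finite, so this density is well defined even when $\gamma$ is improper. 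Since the Kullback--Leibler divergence between product distributions is the sum of the coordinatewise divergences, and since $\|\theta\|_{0}\leq s_{n}$ for $\theta\in\Theta[s_{n}]$,
\[
\sup_{\theta\in\Theta[s_{n}]}R(\theta,q_{\Pi_{\gamma}[\eta_{n}]})\leq (n-s_{n})R_{1}(0;\eta_{n})+s_{n}\sup_{\lambda>0}R_{1}(\lambda;\eta_{n}),
\]
where $R_{1}(\lambda;\eta):=\Ep_{\lambda}[D_{\mathrm{KL}}(q(\cdot\mid\lambda)\,\|\,q_{\gamma,\eta}(\cdot\mid X_{1}))]$. Hence it remains to establish (a) $R_{1}(0;\eta_{n})=O(\eta_{n})$, whose contribution to the display is then $O(n\eta_{n})=O(s_{n})=o(s_{n}\log(\eta_{n}^{-1}))$, and (b) $\sup_{\lambda>0}R_{1}(\lambda;\eta_{n})\leq(1+o(1))\mathcal{C}\log(\eta_{n}^{-1})$.

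For (a), the value $\lambda=0$ forces $X_{1}=0$ and $q(\cdot\mid0)=\delta_{0}$, so $R_{1}(0;\eta_{n})=-\log q_{\gamma,\eta_{n}}(0\mid0)=\log\{((1-\eta_{n})+\eta_{n}A)/((1-\eta_{n})+\eta_{n}B_{0})\}$ with $A=\int_{0}^{\infty}\mathrm{e}^{-r\lambda}\gamma(\lambda)d\lambda$ and $B_{0}=\int_{0}^{\infty}\mathrm{e}^{-(r+1)\lambda}\gamma(\lambda)d\lambda$ finite and satisfying $B_{0}\leq A$; this is $O(\eta_{n})$. For (b) I would split $R_{1}(\lambda;\eta_{n})$ according to $\{X_{1}=0\}$ and $\{X_{1}\geq1\}$. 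On $\{X_{1}=0\}$ one has $q_{\gamma,\eta_{n}}(y\mid0)=\{(1-\eta_{n})\delta_{0}(y)+\eta_{n}B_{y}\}/\{(1-\eta_{n})+\eta_{n}A\}$ with $B_{y}=\int_{0}^{\infty}\mathrm{e}^{-(r+1)\lambda}\lambda^{y}\gamma(\lambda)d\lambda/y!$, so for $y\geq1$ one finds $\log\{1/q_{\gamma,\eta_{n}}(y\mid0)\}=\log(\eta_{n}^{-1})+O(1)-\log B_{y}$; summing against $q(\cdot\mid\lambda)$ isolates the leading term $(1-\mathrm{e}^{-\lambda})\log(\eta_{n}^{-1})$, while the rest equals $\sum_{y\geq1}q(y\mid\lambda)\log q(y\mid\lambda)\leq0$ plus $\sum_{y\geq1}q(y\mid\lambda)(-\log B_{y})$, the latter being $O(\lambda+1)$ thanks to the polynomial lower bound $\gamma(\lambda)\geq c_{1}\lambda^{-\Lambda}$ for $\lambda\geq1$ from \eqref{condition on slab} (which gives $-\log B_{y}\leq C(y+\log(y+1)+1)$ via a Laplace-type estimate). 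Hence the $\{X_{1}=0\}$ contribution is at most $\mathrm{e}^{-r\lambda}(1-\mathrm{e}^{-\lambda})\log(\eta_{n}^{-1})+O(1)$, and since the explicit function $\lambda\mapsto\mathrm{e}^{-r\lambda}(1-\mathrm{e}^{-\lambda})$ attains its supremum $(r/(r+1))^{r}/(r+1)=\mathcal{C}$ at $\lambda=\log\{(r+1)/r\}$, this part is at most $\mathcal{C}\log(\eta_{n}^{-1})+O(1)$ uniformly in $\lambda$.

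On $\{X_{1}\geq1\}$ the spike disappears from the posterior, so $q_{\gamma,\eta_{n}}(\cdot\mid k)=q_{\gamma}(\cdot\mid k)$ does not depend on $\eta_{n}$ for $k\geq1$, and it remains to show $\sup_{\lambda>0}\sum_{k\geq1}(\mathrm{e}^{-r\lambda}(r\lambda)^{k}/k!)D_{\mathrm{KL}}(q(\cdot\mid\lambda)\,\|\,q_{\gamma}(\cdot\mid k))=O(1)$, which is then $o(\log(\eta_{n}^{-1}))$. This is the estimation-type part of the risk, bounded as for the improper slab in the proof of Theorem~\ref{theorem: exact minimaxity within sparse Poisson models}: by convexity of $D_{\mathrm{KL}}$ in its second argument, $D_{\mathrm{KL}}(q(\cdot\mid\lambda)\,\|\,q_{\gamma}(\cdot\mid k))\leq\Ep[D_{\mathrm{KL}}(q(\cdot\mid\lambda)\,\|\,q(\cdot\mid\lambda'))\mid X_{1}=k]$; for $k$ in the bulk of $\mathrm{Po}(r\lambda)$, Condition~\eqref{condition on slab} makes $\gamma$ nearly scale-invariant on the posterior scale, the posterior mean is $(k/r)(1+o(1))$ (this is the tail-robustness~\eqref{weak tail robustness}), and $D_{\mathrm{KL}}(q(\cdot\mid\lambda)\,\|\,q(\cdot\mid k/r))=\lambda\{k/(r\lambda)-1-\log(k/(r\lambda))\}$ has Poisson expectation of order $1/r$; for $k$ away from the bulk, or for very large $\lambda$, a crude polynomial bound on the divergence combined with Poisson tail estimates makes the contribution negligible. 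Combining the two parts gives $R_{1}(\lambda;\eta_{n})\leq\mathcal{C}\log(\eta_{n}^{-1})+O(1)=(1+o(1))\mathcal{C}\log(\eta_{n}^{-1})$ uniformly in $\lambda$, which is (b); with (a) and the reduction above this yields $\sup_{\theta\in\Theta[s_{n}]}R(\theta,q_{\Pi_{\gamma}[\eta_{n}]})\leq O(s_{n})+(1+o(1))\mathcal{C}s_{n}\log(\eta_{n}^{-1})=(1+o(1))\mathcal{R}(\Theta[s_{n}])$, and with the trivial lower bound this is the claimed asymptotic equivalence.

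I expect the main obstacle to be the $\{X_{1}\geq1\}$ estimate: one must convert the single scale-only constraint \eqref{condition on slab} into a quantitative comparison of $q_{\gamma}(\cdot\mid k)$ with a negative-binomial predictive density whose remainder is summable against the Poisson weights (rather than merely finite for each $k$), and handle separately small values of $k$, where the Laplace-type approximation near the posterior mode is poor, and very large $\lambda$, where any bound growing like $\log\lambda$ has to be absorbed by the super-exponential decay of the Poisson probabilities.
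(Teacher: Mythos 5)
Your overall skeleton is correct: the lower bound is immediate from the definition of the minimax risk together with Theorem~\ref{theorem: exact minimaxity within sparse Poisson models}, and the content is the upper bound, obtained coordinate-wise by controlling the single-coordinate risk separately at $\lambda=0$ and uniformly over $\lambda>0$. Your handling of the $\{X_{1}=0\}$ branch is essentially sound, and the identification of $\sup_{\lambda}\mathrm{e}^{-r\lambda}(1-\mathrm{e}^{-\lambda})=\mathcal{C}$ is the right constant. But the route you take diverges from the paper's in an instructive way, and the divergence is exactly where your sketch runs into the obstacle you flag.

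The paper does not re-estimate the predictive Kullback--Leibler divergence at all. It proves Proposition~\ref{proposition: optimality of Cauchy or Pareto} by showing that $\Pi_{\gamma}[\eta_{n}]$ satisfies the abstract Condition~\ref{posterior condition} on the posterior mean $\hat{\theta}_{\Pi,i}(X_{i};t)$, and then simply re-uses Step~2 of the proof of Theorem~\ref{theorem: exact minimaxity within sparse Poisson models}. The ingredient that makes this possible is Lemma~\ref{lemma: Kullback--Leibler formula}: the predictive risk equals an integral over $t\in(r,r+1)$ of estimation-type risks $R_{\mathrm{e}}(t\theta,t\hat{\theta}_{\Pi}(\cdot;t))/t$. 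In these risks the logarithm is $\log\{\lambda/\hat{\theta}_{\Pi,1}(X_{1};t)\}$, i.e.\ the log sits \emph{outside} the posterior expectation that defines $\hat{\theta}_{\Pi,1}$. Verifying Condition~\ref{posterior condition} for a general slab is then a short algebraic exercise: write $\hat{\theta}_{\Pi,i}(X_{i};t)=I_{\gamma}(X_{i}+2;t)/I_{\gamma}(X_{i}+1;t)$ with $I_{\gamma}(s;t)=\int_{0}^{\infty}\lambda^{s-1}\mathrm{e}^{-t\lambda}\gamma(\lambda)\,d\lambda$, use Condition~(\ref{condition integrability}) for finiteness, and integrate by parts to convert $|\lambda\,d\log\gamma/d\lambda|\le\Lambda$ (Condition~(\ref{condition on slab})) into the two-sided bound $(X_{i}+1-\Lambda)/t\le\hat{\theta}_{\Pi,i}(X_{i};t)\le(X_{i}+1+\Lambda)/t$ for large $X_{i}$. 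That is the whole proof.

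Your proposed substitute for Lemma~\ref{lemma: Kullback--Leibler formula} is the convexity bound $D_{\mathrm{KL}}(q(\cdot\mid\lambda)\,\|\,q_{\gamma}(\cdot\mid k))\le\Ep\bigl[D_{\mathrm{KL}}(q(\cdot\mid\lambda)\,\|\,q(\cdot\mid\lambda'))\mid X_{1}=k\bigr]$. This is a valid inequality, but it trades $\log\Ep[\lambda'\mid k]$ for $\Ep[\log\lambda'\mid k]$, which by Jensen is the unfavorable direction for an upper bound: you now need a \emph{lower} bound on $\Ep[\log\lambda'\mid k]$, i.e.\ quantitative control of how much posterior mass $\gamma$ can place near zero, which Conditions~(\ref{condition on slab})--(\ref{condition integrability}) supply only after a nontrivial argument (and the low-$k$ cases, where $\lambda^{k}\gamma(\lambda)$ may blow up as $\lambda\to 0$, need separate treatment). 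By contrast, the paper only ever needs to control the posterior \emph{mean}, which is exactly what (P1)--(P3) do and what the integration-by-parts argument delivers directly. So the gap in your proof is precisely the step you identify as the main obstacle, and the lesson is that Lemma~\ref{lemma: Kullback--Leibler formula} is not a convenience but the device that keeps the log outside the posterior expectation and so avoids that obstacle entirely.
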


\textit{Optimal scaling}:
Our approach uses an improper prior within their mixture and therefore the scaling of the improper slab prior impacts on the resulting predictive density: $\Pi[L\eta_{n},\kappa]$ for arbitrary $L>0$ produces a different predictive density that is asymptotically minimax.
This arbitrariness of the scale is well-known in the objective Bayesian literature and worrisome in practice;
see \cite{Hartigan(2002),LeungandBarron(2006),DawidandMusio(2015)}.

The next proposition provides a guideline for choosing $L$ and removes this arbitrariness.
Let
\[
L^{*}:=L^{*}_{r,\kappa}= \mathcal{C} / \mathcal{K} \quad \text{ with } \mathcal{K} := \Gamma(\kappa+1)\frac{r^{-\kappa}-(r+1)^{-\kappa}}{\kappa}.
\]

\begin{proposition}
\label{proposition: optimal scaling}
Fix $r\in(0,\infty)$ and $\kappa>0$.
Fix also a sequence $s_{n}\in(0,n)$ such that $\eta_{n}=s_{n}/n =o(1)$.
Then, the predictive density $q_{\Pi[L\eta_{n},\kappa]}$ with $L>0$ and $\kappa>0$ satisfies
\begin{equation}
\sup_{\theta\in\Theta[s_{n}]} R(\theta, q_{\Pi[L\eta_{n},\kappa]}) 
\leq \mathcal{C}s_{n}\log (\eta_{n}^{-1})
-\mathcal{C}s_{n}\log L + \mathcal{K}s_{n}L+\Upsilon
\label{eq: upper bound on maximum risk}
\end{equation}
with $\Upsilon$ terms that are independent of $L$ or that are $O(s_{n}\eta_{n})$,
and $L^{*}$ minimizes the right hand sides in (\ref{eq: upper bound on maximum risk}) with respect to $L$.
\end{proposition}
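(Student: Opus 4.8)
The plan is to establish the upper bound \eqref{eq: upper bound on maximum risk} by a direct coordinate-wise risk decomposition, and then to optimize the resulting explicit scalar function of $L$.

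\textbf{Step 1: Coordinate-wise decomposition.}
Since the likelihood $p(x\mid\theta)q(y\mid\theta)$ factorizes over $i$ and the prior $\Pi[L\eta_{n},\kappa]$ is a product, the Bayes predictive density $q_{\Pi[L\eta_{n},\kappa]}$ factorizes, and hence $R(\theta,q_{\Pi[L\eta_n,\kappa]}) = \sum_{i=1}^{n} \rho(\theta_i)$, where $\rho(\theta_i)$ is the one-dimensional Kullback--Leibler risk of predicting $Y_i\sim\mathrm{Po}(\theta_i)$ from $X_i\sim\mathrm{Po}(r\theta_i)$ using the scalar spike-and-slab Bayes rule with slab weight parameter $h=L\eta_n$ and slab $\propto\theta^{\kappa-1}$. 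The task reduces to bounding $\sup_{\theta\in\Theta[s_n]}\sum_i\rho(\theta_i)$; because at most $s_n$ coordinates are nonzero, this is $\le s_n\sup_{\theta_i>0}\rho(\theta_i) + n\,\rho(0)$, but more carefully one separates the contribution of the null coordinates ($\theta_i=0$) from the active ones, since the former already contributes at $O(s_n\eta_n)$-type terms (see below).

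\textbf{Step 2: Expanding the scalar risk in $h=L\eta_n$.}
Using the explicit form of $q_{\Pi[h,\kappa]}$ displayed just before Proposition~\ref{proposition: suboptimality of spike and slab}, I would write $\log q_{\Pi[h,\kappa]}(y_i\mid x_i)$ and take expectations. On the event $x_i=0$ the weight is $\omega_i = 1/\{1+h\,\Gamma(\kappa)/r^{\kappa}\}$, so $\log\omega_i = -\log(1+h\Gamma(\kappa)/r^\kappa)$ and $\log(1-\omega_i)=\log(h\Gamma(\kappa)/r^\kappa) - \log(1+h\Gamma(\kappa)/r^\kappa)$; since $h=L\eta_n\to0$, $\log(1+h\Gamma(\kappa)/r^\kappa) = h\Gamma(\kappa)/r^\kappa + O(h^2)$, which is absorbed into $\Upsilon=O(s_n\eta_n)$. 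The leading behavior comes from two sources: (a) the term $-\log h = \log(\eta_n^{-1}) - \log L$ appearing (with the factor $\mathcal{C}$ after one carries out the Poisson expectation, exactly as in the proof of the upper bound in Theorem~\ref{theorem: exact minimaxity within sparse Poisson models}) whenever an active coordinate has $x_i=0$; and (b) a linear-in-$h$ penalty $\mathcal{K}s_n L\eta_n$ — wait, one must be careful with the $\eta_n$ bookkeeping: the coefficient is arranged so that the active-coordinate maximum of $\rho$ is $\mathcal{C}\log(\eta_n^{-1}) - \mathcal{C}\log L + \mathcal{K}L + o(1)$ uniformly, and multiplying by $s_n$ gives \eqref{eq: upper bound on maximum risk}. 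The constant $\mathcal{K} = \Gamma(\kappa+1)(r^{-\kappa}-(r+1)^{-\kappa})/\kappa$ emerges as the expectation, under the least-favorable small-$\theta_i$ configuration, of the ratio $(1-\omega_i)/\omega_i$ divided by $h$ times the appropriate log-likelihood factor; identifying this precisely is the delicate computation. I would lean on the scalar lemmas already used for Theorem~\ref{theorem: exact minimaxity within sparse Poisson models} — in particular the near-least-favorable prior analysis that produces $\mathcal{C}$ — and simply track the additional $L$-dependence, which enters only through $-\log h$ and the $O(h)$ correction.

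\textbf{Step 3: Minimizing over $L$.}
The right-hand side of \eqref{eq: upper bound on maximum risk}, as a function of $L>0$, is $\mathrm{const} - \mathcal{C}s_n\log L + \mathcal{K}s_n L$; differentiating, $-\mathcal{C}s_n/L + \mathcal{K}s_n = 0$ gives $L = \mathcal{C}/\mathcal{K} = L^*$, and the second derivative $\mathcal{C}s_n/L^2>0$ confirms this is the minimizer. This last step is immediate calculus.

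\textbf{Main obstacle.}
The genuine difficulty is Step~2: obtaining the bound with the \emph{correct} constants $\mathcal{C}$ and $\mathcal{K}$ and with all remainder terms genuinely $O(s_n\eta_n)$ or $L$-independent, uniformly over $\theta\in\Theta[s_n]$. The uniformity requires controlling the scalar risk $\rho(\theta_i)$ over the full range $\theta_i\in(0,\infty)$, not just small $\theta_i$; for moderate and large $\theta_i$ one uses that the slab is improper and polynomially behaved so that $\rho(\theta_i)$ stays bounded (this is where Condition~\eqref{condition on slab}-type control, specialized to the explicit $\theta^{\kappa-1}$ slab, is used), while the supremum is attained near a specific small value of $\theta_i$ scaling with $\eta_n$, mirroring the least-favorable analysis behind Theorem~\ref{theorem: exact minimaxity within sparse Poisson models}. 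Everything else is either a factorization argument (Step~1) or one-line calculus (Step~3).
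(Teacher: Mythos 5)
Your Steps 1 and 3 are sound and match the paper's route: coordinate-wise decomposition into $s_n\sup_{\lambda>0}\rho(\lambda)+(n-s_n)\rho(0)$, expansion in small $h=L\eta_n$, then one-line calculus giving $L^*=\mathcal{C}/\mathcal{K}$. But Step 2 contains a genuine error in bookkeeping: you attribute the $\mathcal{K}s_nL$ term to the \emph{active}-coordinate supremum, asserting that $\sup_{\lambda>0}\rho(\lambda)=\mathcal{C}\log\eta_n^{-1}-\mathcal{C}\log L+\mathcal{K}L+o(1)$ and then multiplying by $s_n$. That is not where the $\mathcal{K}L$ term lives. Retracing the $A_1$ bound from the proof of Theorem~\ref{theorem: exact minimaxity within sparse Poisson models} with $h=L\eta_n$ gives
$\sup_{\lambda>0}\rho(\lambda)\le \mathcal{C}\{\log\eta_n^{-1}-\log L\}+O(1)+O(L\eta_n)$:
the $L$-linear contribution on an active coordinate is $O(L\eta_n)$, not $O(L)$, and it is absorbed into $\Upsilon$. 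The $\mathcal{K}s_nL$ term actually comes from the $n-s_n$ \emph{null} coordinates. Indeed
$\rho(0)=-\log q_{\Pi[h,\kappa]}(0\mid 0)
=\log\bigl\{1+h\Gamma(\kappa)/r^{\kappa}\bigr\}-\log\bigl\{1+h\Gamma(\kappa)/(r+1)^{\kappa}\bigr\}
=\mathcal{K}h+O(h^2)$,
so $(n-s_n)\rho(0)=\mathcal{K}L\,n\eta_n(1+o(1))=\mathcal{K}Ls_n(1+o(1))$. Your Step~1 remark that the null coordinates contribute only ``$O(s_n\eta_n)$-type terms'' is therefore false, and is exactly what leads to the misattribution in Step~2; with that reasoning you would not be able to identify the constant $\mathcal{K}$ (you essentially assert it rather than derive it). The fix is to carry out the $h$-linear expansion of $\rho(0)$ as above (this is precisely where the paper replaces the generic $C_2$ in its bound on $A_1$ by the explicit constant $\mathcal{K}$) and then note $n\eta_n=s_n$.
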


This result shows that the scale of improper slab priors can be specified by the predictive setting (characterized by $r$). Our idea here is relevant to \cite{Hartigan(2002)}: In \cite{Hartigan(2002)}, the scale of improper priors within their mixture is determined to yield log-posterior probabilities that coincide with log maximum likelihood plus an Akaike factor; see also the appendix of \cite{LeungandBarron(2006)}. In this light, \cite{Hartigan(2002)} and this paper indicate that the specifications of the scale of improper priors within their mixture can be done from a predictive viewpoint.

\section{Extensions}
\label{section: extensions}

We present two extensions of our results: one is quasi-sparsity; the other is missing completely at random.
These extensions are important in practice.

\subsection{Quasi-sparsity}

We introduce the notion of the quasi sparse parameter space.
Given $s \in (0,n)$ and a threshold $\varepsilon>0$,
the quasi sparse parameter space is defined as $\Theta[s,\varepsilon] := \{ \theta\in \R_{+}^{n}: N(\theta,\varepsilon)\leq s \}$,
where $N(\theta,\varepsilon):=\#\{i: \theta_{i} > \varepsilon\}$, $\varepsilon>0$.
A threshold value $\varepsilon$ determines whether the parameter value of each coordinate is near-zero or not.

The next two propositions specifies the minimax risk over the quasi-sparse parameter space and presents an adaptive minimax predictive density.
\begin{proposition}
\label{proposition: quasi-sparse minimax} 
Fix $r\in(0,\infty)$ and fix a sequence $s_{n}\in(0,\infty)$ such that $\eta_{n} = s_{n}/n = o(1)$. Fix also a shrinking sequence $\varepsilon_{n}>0$ such that $\varepsilon_{n} =o( \eta_{n} )$.
Then,
for the quasi sparse parameter space $\Theta[s_{n},\varepsilon_{n}]$,
the following holds:
\begin{align*}
\mathcal{R}(\Theta[s_{n},\varepsilon_{n}]):= \inf_{\hat{q}}\sup_{\theta\in \Theta[s_{n},\varepsilon_{n}]} R(\theta, \hat{q}) \sim \mathcal{C} s_{n} \log (\eta^{-1}_{n}) \text{ as $n\to\infty$.}
\end{align*}
Further, the predictive density $q_{\Pi[\eta_{n},\kappa]}$ based on $\Pi[\eta_{n},\kappa]$ with $\kappa>0$ is asymptotically minimax: i.e.,
\begin{align*}
\sup_{\theta\in\Theta[s_{n},\varepsilon_{n}]}R( \theta , q_{\Pi[ \eta_{n},\kappa]}) &\sim \mathcal{R}(\Theta[s_{n},\varepsilon_{n}]) \text{ as }n\to\infty.
\end{align*}
\end{proposition}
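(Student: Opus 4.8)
The plan is to deduce the proposition from Theorem~\ref{theorem: exact minimaxity within sparse Poisson models} by squeezing $\mathcal{R}(\Theta[s_{n},\varepsilon_{n}])$ between the exact-sparse minimax risk and the worst-case risk of $q_{\Pi[\eta_{n},\kappa]}$ over the larger class. The lower bound is immediate: any $\theta$ with $\|\theta\|_{0}\le s_{n}$ satisfies $N(\theta,\varepsilon_{n})\le\|\theta\|_{0}\le s_{n}$, so $\Theta[s_{n}]\subseteq\Theta[s_{n},\varepsilon_{n}]$ and hence $\mathcal{R}(\Theta[s_{n},\varepsilon_{n}])\ge\mathcal{R}(\Theta[s_{n}])\sim\mathcal{C}s_{n}\log(\eta_{n}^{-1})$. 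It therefore remains only to prove the single upper bound $\sup_{\theta\in\Theta[s_{n},\varepsilon_{n}]}R(\theta,q_{\Pi[\eta_{n},\kappa]})\le\mathcal{C}s_{n}\log(\eta_{n}^{-1})(1+o(1))$, which yields both the matching upper bound for the minimax risk and the asymptotic minimaxity of $q_{\Pi[\eta_{n},\kappa]}$ at once.

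For the upper bound I would exploit the product structure. Since $p(\cdot\mid\theta)$, $q(\cdot\mid\theta)$ and $\Pi[\eta_{n},\kappa]$ all factorize over coordinates, the risk splits as $R(\theta,q_{\Pi[\eta_{n},\kappa]})=\sum_{i=1}^{n}R_{1}(\theta_{i})$, where $R_{1}(\lambda)$ denotes the one-dimensional Kullback--Leibler risk of the zero-inflated negative binomial marginal at parameter $\lambda\ge0$; this is precisely the per-coordinate quantity analysed in the proof of Theorem~\ref{theorem: exact minimaxity within sparse Poisson models}. Given $\theta\in\Theta[s_{n},\varepsilon_{n}]$, I would partition the indices into $A:=\{i:\theta_{i}>\varepsilon_{n}\}$, with $|A|\le s_{n}$ by the definition of the quasi-sparse class, and its complement. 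On $A$ I bound each term by $\sup_{\lambda\ge0}R_{1}(\lambda)$, which the proof of Theorem~\ref{theorem: exact minimaxity within sparse Poisson models} shows is $\sim\mathcal{C}\log(\eta_{n}^{-1})$ (attained near the fixed value $\lambda=\log((r+1)/r)$); so the contribution of $A$ is at most $s_{n}\mathcal{C}\log(\eta_{n}^{-1})(1+o(1))$, uniformly in $\theta$.

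The near-zero coordinates must then be shown to be negligible. For $0\le\lambda\le\varepsilon_{n}$ one estimates $R_{1}(\lambda)$ directly from the explicit form of the marginal predictive density: the value at $\lambda=0$ is $R_{1}(0)=-\log\{\omega+(1-\omega)(r/(r+1))^{\kappa}\}\sim\mathcal{K}\eta_{n}$ with $\omega=1/\{1+\eta_{n}\Gamma(\kappa)/r^{\kappa}\}$, and moving $\lambda$ up to $\varepsilon_{n}$ adds only a term of order $\varepsilon_{n}\log(1/\varepsilon_{n})$, coming from the small-probability event $\{X_{i}\ge1\}$ and from the mass the Poisson law $\mathrm{Po}(\lambda)$ places on $\{y\ge1\}$. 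Hence $\sum_{i\notin A}R_{1}(\theta_{i})=O\!\left(n\eta_{n}+n\varepsilon_{n}\log(1/\varepsilon_{n})\right)=O(s_{n})+o(s_{n}\log(\eta_{n}^{-1}))=o(s_{n}\log(\eta_{n}^{-1}))$; writing $\varepsilon_{n}=\delta_{n}\eta_{n}$ with $\delta_{n}\to0$, the hypothesis $\varepsilon_{n}=o(\eta_{n})$ is exactly what forces $n\varepsilon_{n}\log(1/\varepsilon_{n})=s_{n}\delta_{n}\log(\eta_{n}^{-1})+o(s_{n})=o(s_{n}\log(\eta_{n}^{-1}))$. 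Adding the two pieces gives the required upper bound.

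I expect the one genuinely delicate step to be this uniform control of $R_{1}(\lambda)$ on the shrinking window $0\le\lambda\le\varepsilon_{n}$: one must verify that perturbing the parameter of a ``noise'' coordinate away from exactly $0$ cannot inflate its risk past the $O(\eta_{n})$-plus-$o(\eta_{n}\log\eta_{n}^{-1})$ budget, and this is precisely where $\varepsilon_{n}=o(\eta_{n})$ is used. Everything else is bookkeeping layered on the one-dimensional estimates already established for Theorem~\ref{theorem: exact minimaxity within sparse Poisson models} together with the inclusion $\Theta[s_{n}]\subseteq\Theta[s_{n},\varepsilon_{n}]$.
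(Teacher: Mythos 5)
Your proposal is correct and follows essentially the same route as the paper: the lower bound via the inclusion $\Theta[s_{n}]\subseteq\Theta[s_{n},\varepsilon_{n}]$ (the paper re-derives the Bayes-risk bound over an exact-sparse subset, which is equivalent but less streamlined), and the upper bound via the split into the at most $s_{n}$ coordinates with $\theta_{i}>\varepsilon_{n}$, each controlled by $\sup_{\lambda>0}\rho(\lambda)\sim\mathcal{C}\log\eta_{n}^{-1}$, and the remaining coordinates controlled uniformly on $[0,\varepsilon_{n}]$. The only cosmetic difference is that you track the per-coordinate contribution of the near-zero coordinates as $O(\eta_{n}+\varepsilon_{n}\log(1/\varepsilon_{n}))$, while the paper records it as $O(\varepsilon_{n}\log\eta_{n}^{-1})$; both yield $o(s_{n}\log\eta_{n}^{-1})$ after multiplying by $n$ under $\varepsilon_{n}=o(\eta_{n})$, so either bookkeeping closes the argument.
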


\begin{proposition} 
\label{proposition: adaptive in quasi-sparse}
Fix $r\in(0,\infty)$ and $\kappa>0$.
Then, the predictive density $q_{\Pi[\hat{\eta}_{n},\kappa]}$ is adaptive in the asymptotically minimax sense on the class of quasi sparse parameter spaces:
i.e., for any sequence $s_{n}\in[1,n)$ such that $\sup_{n} s_{n} / n < 1$ and $\eta_{n}=s_{n}/n=o(1)$
and for any sequence $\varepsilon_{n}>0$ such that $\varepsilon_{n}=o(\eta_{n})$, we have
\begin{align*}
    \sup_{\theta\in\Theta[s_{n},\varepsilon_{n}]}R(\theta,q_{\Pi[\hat{\eta}_{n},\kappa]})
    &\sim \mathcal{R}(\Theta[s_{n},\varepsilon_{n}]) \text{  as }n\to\infty.
\end{align*}
\end{proposition}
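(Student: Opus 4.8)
\emph{Strategy.} The plan is to obtain the matching upper bound by combining Proposition~\ref{proposition: quasi-sparse minimax} with a bound on the difference between the Kullback--Leibler risks of $q_{\Pi[\hat{\eta}_{n},\kappa]}$ and $q_{\Pi[\eta_{n},\kappa]}$, exactly as in the proof of Theorem~\ref{theorem: adaptive}. The lower bound $\mathcal{R}(\Theta[s_{n},\varepsilon_{n}])\leq\sup_{\theta\in\Theta[s_{n},\varepsilon_{n}]}R(\theta,q_{\Pi[\hat{\eta}_{n},\kappa]})$ is immediate. For the upper bound I would write, for $\theta\in\Theta[s_{n},\varepsilon_{n}]$,
\begin{align*}
R(\theta,q_{\Pi[\hat{\eta}_{n},\kappa]}) &= R(\theta,q_{\Pi[\eta_{n},\kappa]}) + \Delta_{n}(\theta),\\
\Delta_{n}(\theta)&:=\Ep_{\theta}\Big[\textstyle\sum_{y\in\mathbb{N}^{n}}q(y\mid\theta)\log\frac{q_{\Pi[\eta_{n},\kappa]}(y;X)}{q_{\Pi[\hat{\eta}_{n},\kappa]}(y;X)}\Big].
\end{align*}
By Proposition~\ref{proposition: quasi-sparse minimax}, $R(\theta,q_{\Pi[\eta_{n},\kappa]})\leq\sup_{\theta'\in\Theta[s_{n},\varepsilon_{n}]}R(\theta',q_{\Pi[\eta_{n},\kappa]})=(1+o(1))\mathcal{C}s_{n}\log(\eta_{n}^{-1})$ for every $\theta\in\Theta[s_{n},\varepsilon_{n}]$, so it suffices to prove $\sup_{\theta\in\Theta[s_{n},\varepsilon_{n}]}\Delta_{n}(\theta)=o(s_{n}\log(\eta_{n}^{-1}))$.

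\emph{Decomposition of the risk difference.} Next I would use the explicit form of $q_{\Pi[h,\kappa]}$ recorded in Section~\ref{section: predictive density estimation in sparse Poisson models}: the predictive density depends on $h$ only through the weight $\omega(h):=1/\{1+h\Gamma(\kappa)/r^{\kappa}\}$, and only at coordinates $i$ with $x_{i}=0$ (there the negative-binomial part is $h$-free). Integrating out $Y_{i}\sim\mathrm{Po}(\theta_{i})$ coordinatewise, and noting that the two log-ratios thereby produced are common to all coordinates (they depend on $X$ only through $\hat{\eta}_{n}$), a short computation gives $\Delta_{n}(\theta)=\mathrm{I}_{n}(\theta)+\mathrm{II}_{n}(\theta)$ with, writing $g_{0}:=(r/(r+1))^{\kappa}$,
\begin{align*}
\mathrm{I}_{n}(\theta)&=\Ep_{\theta}\Big[\log\tfrac{1-(1-\omega(\eta_{n}))(1-g_{0})}{1-(1-\omega(\hat{\eta}_{n}))(1-g_{0})}\,\textstyle\sum_{i:X_{i}=0}\mathrm{e}^{-\theta_{i}}\Big],\\
\mathrm{II}_{n}(\theta)&=\Ep_{\theta}\Big[\log\tfrac{1-\omega(\eta_{n})}{1-\omega(\hat{\eta}_{n})}\,\textstyle\sum_{i:X_{i}=0}(1-\mathrm{e}^{-\theta_{i}})\Big].
\end{align*}

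\emph{Estimates that close the argument.} I would then establish, uniformly over $\theta\in\Theta[s_{n},\varepsilon_{n}]$: (a) $\Ep_{\theta}[\hat{s}_{n}]=O(s_{n})$, since $\Ep_{\theta}[\#\{i:X_{i}\geq1\}]\leq s_{n}+rn\varepsilon_{n}=s_{n}(1+o(1))$ because $\varepsilon_{n}=o(\eta_{n})$; (b) the deterministic bounds $\sum_{i}(1-\mathrm{e}^{-\theta_{i}})\leq s_{n}+n\varepsilon_{n}=s_{n}(1+o(1))$ and $\mu(\theta):=\sum_{i}(1-\mathrm{e}^{-r\theta_{i}})\leq s_{n}(1+o(1))$, together with $\sum_{i}(1-\mathrm{e}^{-\theta_{i}})\leq\max\{1,r^{-1}\}\mu(\theta)$; and (c), from a one-sided Chernoff bound for $\hat{s}_{n}=\max\{1,\#\{i:X_{i}\geq1\}\}$ about $\mu(\theta)$, $\Ep_{\theta}[(\log(s_{n}/\hat{s}_{n}))^{+}]\leq\log(s_{n}/\max\{1,\mu(\theta)/2\})+\mathrm{e}^{-\mu(\theta)/8}\log s_{n}$. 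For $\mathrm{I}_{n}$, the inequality $\log(a/b)\leq(a-b)/b$ with $b=1-(1-\omega(\hat{\eta}_{n}))(1-g_{0})\geq g_{0}$ and $\omega(\eta_{n})-\omega(\hat{\eta}_{n})\leq\hat{\eta}_{n}\Gamma(\kappa)/r^{\kappa}$ on $\{\hat{\eta}_{n}>\eta_{n}\}$ bound the positive part of its log-ratio by a constant multiple of $\hat{\eta}_{n}$, so $\mathrm{I}_{n}(\theta)\leq C\,\Ep_{\theta}[\hat{\eta}_{n}n]=C\,\Ep_{\theta}[\hat{s}_{n}]=O(s_{n})$ by (a). For $\mathrm{II}_{n}$, the identity $\log\{(1-\omega(\eta_{n}))/(1-\omega(\hat{\eta}_{n}))\}=\log(s_{n}/\hat{s}_{n})+\log\{(1+\hat{\eta}_{n}a)/(1+\eta_{n}a)\}$ with $a=\Gamma(\kappa)/r^{\kappa}$ and $\log\{(1+\hat{\eta}_{n}a)/(1+\eta_{n}a)\}\leq\hat{\eta}_{n}a$, combined with $\sum_{i:X_{i}=0}(1-\mathrm{e}^{-\theta_{i}})\leq\sum_{i}(1-\mathrm{e}^{-\theta_{i}})$ and (b)--(c), gives $\mathrm{II}_{n}(\theta)\leq\max\{1,r^{-1}\}\mu(\theta)\log(s_{n}/\max\{1,\mu(\theta)/2\})+O(s_{n}\eta_{n})+O(\log s_{n})$. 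Since the map $t\mapsto t\log(s_{n}/\max\{1,t/2\})$ is $O(s_{n})$ on $(0,s_{n}(1+o(1))]$, we get $\mathrm{II}_{n}(\theta)=O(s_{n})$. As $O(s_{n})=o(s_{n}\log(\eta_{n}^{-1}))$, this yields $\sup_{\theta}\Delta_{n}(\theta)=o(s_{n}\log(\eta_{n}^{-1}))$ and hence the proposition.

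\emph{Main obstacle.} The difficulty absent in the exact-sparse setting is that $\Theta[s_{n},\varepsilon_{n}]$ may contain up to $n-1$ near-zero coordinates $0<\theta_{i}\leq\varepsilon_{n}$: these are false positives for $\hat{s}_{n}$ and feed into both sums defining $\Delta_{n}$, and the per-coordinate log-ratio in $\mathrm{II}_{n}$ can be of order $\log n$, which dominates $\log(\eta_{n}^{-1})$ when $s_{n}$ is close to $n$. The resolution, which I expect to be the technical heart of the proof, is that the random multiplier $\sum_{i:X_{i}=0}(1-\mathrm{e}^{-\theta_{i}})$ and the magnitude of $\hat{s}_{n}$ are jointly governed by $\mu(\theta)$: a large near-zero mass forces $\hat{s}_{n}$ to concentrate around $\mu(\theta)\gtrsim_{r}\sum_{i:\theta_{i}\leq\varepsilon_{n}}\theta_{i}$, so $\log(s_{n}/\hat{s}_{n})$ cannot simultaneously be large --- a trade-off captured quantitatively by $\mu\log(s_{n}/\mu)=O(s_{n})$ for $\mu\leq s_{n}(1+o(1))$. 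Because $\Delta_{n}$ is analysed directly rather than coordinatewise, the three properties of $\hat{s}_{n}$ invoked for Theorem~\ref{theorem: adaptive} need not be re-derived over $\Theta[s_{n},\varepsilon_{n}]$; only Proposition~\ref{proposition: quasi-sparse minimax} and the estimates (a)--(c) are used.
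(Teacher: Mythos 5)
Your proof is correct, and it takes a genuinely different route from the paper's. The paper proves Proposition~\ref{proposition: adaptive in quasi-sparse} by reusing the coordinate-wise decomposition and the case analysis (\ref{Case i})--(\ref{Case iv}) from the proof of Theorem~\ref{theorem: adaptive} almost verbatim: with $\mathcal{A}=\{i:\theta_{i}>\varepsilon_{n}\}$, it applies (\ref{Case i in A}) unchanged for $i\in\mathcal{A}$ and, for $i\notin\mathcal{A}$, weights the $Y_{i}\geq1$ branch by $\Pr(Y_{i}\geq1\mid\theta_{i})\leq1-\mathrm{e}^{-\varepsilon_{n}}$ so that the $\log(s_{n}/\hat{s}_{n})$ contribution across the $\sim n$ near-zero coordinates scales by $n\varepsilon_{n}=o(s_{n})$; it then closes by invoking Lemma~\ref{lemma: properties of hats_n}(a)--(c) and the same split on $s_{n}$ and $\theta_{[1]}$ used in (\ref{eq: eval T1 1})--(\ref{eq: eval T1 3}). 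You instead collapse the coordinate index immediately, exploiting the fact that the log-ratio at $X_{i}=0$ depends on $X$ only through $\hat{\eta}_{n}$ and on $Y_{i}$ only through $1_{Y_{i}\geq1}$, to write $\Delta_{n}=\mathrm{I}_{n}+\mathrm{II}_{n}$ with explicit random multipliers $\sum_{i:X_{i}=0}\mathrm{e}^{-\theta_{i}}$ and $\sum_{i:X_{i}=0}(1-\mathrm{e}^{-\theta_{i}})$. You then replace Lemma~\ref{lemma: properties of hats_n} by a one-sided Chernoff bound for $\hat{s}_{n}$ about $\mu(\theta)$ and the elementary bound $\mu\log(s_{n}/\max\{1,\mu/2\})=O(s_{n})$ on $(0,s_{n}(1+o(1))]$. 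What your route buys is that the false-positive/trade-off phenomenon --- near-zero coordinates inflate $\hat{s}_{n}$ exactly when they also enter the risk multiplier --- is handled in one step, eliminating the case split on $s_{n}$ versus $n^{1/2}$ and on $\theta_{[1]}$ that the paper's argument needs; the price is that you must first carry out the explicit integration over $Y_{i}$ to produce $\mathrm{I}_{n},\mathrm{II}_{n}$, which the paper sidesteps by staying coordinate-wise and quoting ready-made displays. Both are complete proofs; yours is arguably more transparent about \emph{why} $n-s_{n}$ near-zero coordinates cause no harm, whereas the paper's is shorter because it outsources the hard part to Lemma~\ref{lemma: properties of hats_n} and Theorem~\ref{theorem: adaptive}.
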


\subsection{Missing completely at random}
\label{subsection: MCAR}
We describe prediction using sparse Poisson models when the current observation is missing completely at random (MCAR).
Let $r_i$'s ($i = 1,2,\ldots$) be positive random variables.
Given $r_{i}$ ($i=1,\ldots,n$),
let $X_i$ $(i = 1, 2, \ldots, n)$ be a current observation independently distributed according to Po$(r_i\theta_i)$, 
and 
let $Y_i$ $(i = 1, 2, \ldots, n)$ be a future observation independently distributed according to Po$(\theta_i)$,
where $\theta_{i}$ $(i=1,\ldots,n)$ is an unknown parameter.
Suppose that $X = (X_1,\ldots,X_n)$ and $Y = (Y_1,\ldots, Y_n)$ are independent.
We denote by $R(\theta,\hat{q}\mid \{r_{i}\})$ the Kullback--Leibler risk conditioned on $r_{i}$s.
We also denote by $\overline{\mathcal{R}}(\Theta[s_{n}] \mid \{r_{i}\})$ the minimax  Kullback--Leibler risk over $\Theta[s_{n}]$ conditioned on $r_{i}$s.

To present mathematically unblemished results,
we assume that $r_{i}$s are independent and identically distributed according to a sampling distribution $G$,
and
make the following condition on $G$.
Let $\mathbb{E}_G$ be the expectation with respect to $G$.
\begin{condition}
\label{Condition: tail and edge}
A sampling distribution $G$ satisfies the following:
(i) $\mathbb{E}_{G}[r_{1}^{2}]<\infty$;
(ii) $\mathbb{E}_{G}[r_{1}^{-2}]<\infty$.
\end{condition}
Condition \ref{Condition: tail and edge} (i) is usual.
Condition \ref{Condition: tail and edge} (ii) excludes any distribution $G$ highly concentrated around $0$ and is not stringent.
Consider a longitudinal situation in which $X_{i}$ ($i=1,\ldots,n$) is obtained as the sum of $\{X_{i,j}:j=1,\ldots,r_{i}\}$,
where $r_{i}$ ($i=1,\ldots,n$) represents the sample size in the $i$-th coordinate,
and
for each $i$,
$X_{i,j}$ $(j=1,\ldots,r_{i})$ follows Po$(\theta_{i})$.
Condition \ref{Condition: tail and edge} implies that for each coordinate there exists at least one observation:
$r_{i}\geq 1$.
Note that in our real data applications (Subsection \ref{subsection: applications to real data}), Condition \ref{Condition: tail and edge} (ii) is satisfied.


The following propositions describe the asymptotic minimax risk and present an adaptive minimax predictive density.
Fix an infinite sequence $\{r_i\in(0, \infty):i\in\mathbb{N}\}$ such that $0 < \inf_i r_i 
\leq \sup_i r_i <\infty$.
For any $i\in\mathbb{N}$, let  \[\mathcal{C}_{i}:=\mathcal{C}_{r_{i}}=\left(\frac{r_i}{r_i+1}\right)^{r_i}\left(\frac{1}{r_i+1}\right).\]
Let $\overline{\mathcal{C}}:=\overline{\mathcal{C}_{n}}=\sum_{i=1}^{n}\mathcal{C}_{i} /n .$

\begin{proposition}
\label{proposition: minimax risk in random design}
Fix a sequence $s_{n}\in(0,n)$ such that $\eta_{n}=s_{n}/n=o(1)$.
Under Condition \ref{Condition: tail and edge},
we have
\[
 \underset{n\to\infty}{\operatorname{plim}} \text{ }  
	\overline{\mathcal{R}}(\Theta[s_{n}] \mid \{r_{i}\}) 
	\Big{/} \{\mathbb{E}_G [\overline{\mathcal{C}}] s_{n} \log (\eta_{n}^{-1})\} 
= 1.
\]
Further, the predictive density $q_{\Pi[\eta_{n},\kappa]}$ based on $\Pi[\eta_{n},\kappa]$ with $0< \kappa \le 1$ is asymptotically minimax:
\[
 \underset{n\to\infty}{\operatorname{plim}} \text{ }  
	\overline{\mathcal{R}}(\Theta[s_{n}]\mid \{r_{i}\}) 
	\Big{/} R(\theta, q_{\Pi[\eta_{n},\kappa]}\mid \{r_{i}\})
= 1.
\]
\end{proposition}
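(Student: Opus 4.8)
The plan is to reduce Proposition~\ref{proposition: minimax risk in random design} to the fixed-$r$ result of Theorem~\ref{theorem: exact minimaxity within sparse Poisson models} by conditioning on the design sequence $\{r_i\}$ and then averaging. For the lower bound, I would fix a realization of $\{r_i\}$ and run the Bayes-risk argument coordinatewise: a least-favorable prior that places mass $\eta_n$ at a well-chosen nonzero value in each coordinate contributes, in coordinate $i$, asymptotically $\mathcal{C}_i\,\eta_n\log(\eta_n^{-1})$ to the Bayes risk (this is exactly the per-coordinate calculation already carried out in the proof of Theorem~\ref{theorem: exact minimaxity within sparse Poisson models}, with $r$ replaced by $r_i$). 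Summing over $i$ and dividing by $n$ gives a conditional lower bound $\sim \overline{\mathcal{C}}_n s_n\log(\eta_n^{-1})$. Because $\mathcal{C}_{r_i}$ is bounded (it lies in $(0,1)$) and the $r_i$ are i.i.d., the law of large numbers gives $\overline{\mathcal{C}}_n \to \mathbb{E}_G[\mathcal{C}_{r_1}] = \mathbb{E}_G[\overline{\mathcal{C}}]$ almost surely, hence in probability, which upgrades the conditional statement to the stated $\operatorname{plim}$.

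For the upper bound I would analyze $R(\theta,q_{\Pi[\eta_n,\kappa]}\mid\{r_i\})$ directly. Since both the prior $\Pi[\eta_n,\kappa]$ and the likelihood factor across coordinates, the risk decomposes as a sum of per-coordinate risks, and the explicit form of $q_{\Pi[h,\kappa]}$ displayed in the excerpt (a zero-inflated negative binomial) lets me reuse the single-coordinate risk bounds from the proof of Theorem~\ref{theorem: exact minimaxity within sparse Poisson models} verbatim, now with the coordinate's own $r_i$ in place of $r$. The key point is that these single-coordinate bounds are uniform in $r$ over any compact subset of $(0,\infty)$; combined with the hypothesis $0<\inf_i r_i\le\sup_i r_i<\infty$, this yields
\begin{align*}
\sup_{\theta\in\Theta[s_n]} R(\theta,q_{\Pi[\eta_n,\kappa]}\mid\{r_i\}) \le \Big(\textstyle\sum_{i=1}^n \mathcal{C}_{r_i}\Big)\eta_n\log(\eta_n^{-1})\,(1+o(1)) = \overline{\mathcal{C}}_n s_n\log(\eta_n^{-1})\,(1+o(1)),
\end{align*}
where the $o(1)$ is deterministic because it only depends on $n$, $\eta_n$, and the fixed bounds on $r_i$. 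Then $\overline{\mathcal{C}}_n\to\mathbb{E}_G[\overline{\mathcal{C}}]$ in probability closes the argument, and since the displayed upper bound matches the lower bound to leading order, $q_{\Pi[\eta_n,\kappa]}$ is asymptotically minimax in the $\operatorname{plim}$ sense.

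I expect two steps to require genuine care. The first is making the per-coordinate risk estimates uniform in $r_i$ on the compact interval $[\inf_i r_i,\sup_i r_i]$: the proof of Theorem~\ref{theorem: exact minimaxity within sparse Poisson models} is written for a single fixed $r$, and I need to verify that the implied constants and the $o(1)$ remainders can be taken to depend only on the endpoints of that interval, not on the individual $r_i$. The restriction $0<\kappa\le 1$ in the statement (versus $\kappa>0$ in Theorem~\ref{theorem: exact minimaxity within sparse Poisson models}) is presumably what makes one of these uniform bounds go through — I would track where $\kappa\le 1$ is used, likely in controlling the negative-binomial tail term $\binom{x_i+y_i+\kappa-1}{y_i}$ uniformly. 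The second, and main, obstacle is the role of Condition~\ref{Condition: tail and edge}: although the conditional analysis only uses the deterministic bounds $0<\inf_i r_i\le\sup_i r_i<\infty$, establishing that such bounds hold (or hold with probability tending to one, which suffices for a $\operatorname{plim}$ statement) from i.i.d.\ draws requires the moment conditions $\mathbb{E}_G[r_1^2]<\infty$ and $\mathbb{E}_G[r_1^{-2}]<\infty$ to control $\sup_{i\le n} r_i$ and $\inf_{i\le n} r_i$; more precisely, I would show that the contribution of coordinates with very large or very small $r_i$ to the total risk is negligible relative to $s_n\log(\eta_n^{-1})$, using the second-moment conditions via a truncation-and-Markov argument, and that the law of large numbers for $\overline{\mathcal{C}}_n$ needs only $\mathcal{C}_{r_1}$ bounded (automatic) but the matching control of the risk remainder needs the moment bounds. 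Reconciling "fix an infinite sequence with $0<\inf_i r_i\le\sup_i r_i<\infty$" in the proposition's preamble with the i.i.d.\ sampling model under Condition~\ref{Condition: tail and edge} is the delicate bookkeeping point, and I would handle it by first proving the deterministic conditional statement under the sup/inf bounds and then invoking Condition~\ref{Condition: tail and edge} together with the truncation estimate to pass to the probabilistic conclusion.
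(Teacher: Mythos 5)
Your overall route matches the paper's: establish a conditional (deterministic) asymptotic equivalence of $\overline{\mathcal{R}}(\Theta[s_n]\mid\{r_i\})$ with $\overline{\mathcal{C}}\,s_n\log(\eta_n^{-1})$, then upgrade to a $\operatorname{plim}$ statement via concentration of the random average $\overline{\mathcal{C}}=\sum_i \mathcal{C}_{r_i}/n$. The paper uses Hoeffding with the explicit choice $\delta_n = \mathrm{e}^{-n/\log n}$ rather than the LLN, but since $\mathcal{C}_{r_i}\in(0,1)$ either device serves.

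Where your sketch is too loose --- and where the paper does the real work --- is the conditional lower bound. You describe an i.i.d.\ spike-and-slab prior (``places mass $\eta_n$ at a well-chosen nonzero value in each coordinate'') and assert this is ``exactly the per-coordinate calculation already carried out'' in Theorem~\ref{theorem: exact minimaxity within sparse Poisson models}. That is not what the Theorem~\ref{theorem: exact minimaxity within sparse Poisson models} proof does: it uses a \emph{block-independent} prior with exactly one spike per block of size $m_n=\lfloor\eta_n^{-1}\rfloor$, precisely so the prior's support lies inside $\Theta[s_n]$ and its Bayes risk lower-bounds the minimax risk with no truncation step. In the MCAR setting, the within-block heterogeneity of the $r_i$'s creates a genuinely new difficulty: the posterior over spike locations given all-zero observations is no longer uniform but weighted by $\exp(-r_k\nu_k)$, and the spike magnitude must be tuned coordinate-wise ($\nu^\circ_i=\log(1+1/r_i)$) to extract the sharp constant. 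The paper handles this with a \emph{varied-spike block-independent} prior $\Pi_{\mathrm{VB},\nu}$, an explicit analysis of the resulting weights $w_k^{(j)}$, and an additional regularity condition on the realized sequence (needed so that block averages of $\mathcal{C}_{r_i}$ approximate $\overline{\mathcal{C}}$). None of this appears in your sketch; to make the lower bound rigorous you would need either to reproduce this block construction or to add a truncation argument showing your i.i.d.\ prior's mass escaping $\Theta[s_n]$ is negligible.

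On the delicate bookkeeping point you flag --- reconciling the deterministic $0<\inf_i r_i\le\sup_i r_i<\infty$ hypothesis with i.i.d.\ sampling under Condition~\ref{Condition: tail and edge} --- you are right that the paper does not spell this out fully; it asserts the remainder $\overline{\mathcal{R}}(\Theta[s_n]\mid\{r_i\}) - \overline{\mathcal{C}}s_n\log(\eta_n^{-1})=O_P(1)$ by invoking the upper-bound display and the moment conditions without an explicit truncation. Your truncation-and-Markov plan is a reasonable way to fill that gap, provided you track how extreme $r_i$'s affect both the $\mathrm{e}^{-r_i\nu}$ factors in the lower bound and the $1/t_i$ posterior scaling in the upper bound.
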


\begin{proposition} 
\label{proposition: adaptive in random design}
Fix $\kappa \in(0,1]$ and assume that Condition \ref{Condition: tail and edge} holds.
Then, the predictive density $q_{\Pi[\hat{\eta}_{n},\kappa]}$ is adaptive in the asymptotically minimax sense on the class of exact sparse parameter spaces:
for any sequence $s_{n}\in[1,n)$ such that $\sup_{n} s_{n} / n < 1$ and $\eta_{n}=s_{n}/n=o(1)$, 
\begin{align*}
 \underset{n\to\infty}{\operatorname{plim}} \text{ }  
	\overline{\mathcal{R}}(\Theta[s_{n}]\mid \{r_{i}\}) 
	\Big{/} R(\theta, q_{\Pi[\eta_{n},\kappa]} \mid \{r_{i}\})
= 1.
\end{align*}
\end{proposition}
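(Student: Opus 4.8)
The plan is to leverage Proposition~\ref{proposition: minimax risk in random design} together with a perturbation estimate showing that, conditionally on $\{r_i\}$, replacing the oracle slab scale $\eta_n$ by the plug-in estimate $\hat\eta_n$ changes the Kullback--Leibler risk by a vanishing fraction of $\overline{\mathcal R}(\Theta[s_n]\mid\{r_i\})$, uniformly over $\theta\in\Theta[s_n]$. By Proposition~\ref{proposition: minimax risk in random design}, with $G^{\otimes\infty}$-probability tending to one both $\overline{\mathcal R}(\Theta[s_n]\mid\{r_i\})$ and $\sup_{\theta\in\Theta[s_n]}R(\theta,q_{\Pi[\eta_n,\kappa]}\mid\{r_i\})$ are asymptotically equivalent to $\mathbb{E}_{G}[\overline{\mathcal{C}_{n}}]\,s_n\log(\eta_n^{-1})$. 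Since trivially $\overline{\mathcal R}(\Theta[s_n]\mid\{r_i\})\le\sup_{\theta\in\Theta[s_n]}R(\theta,q_{\Pi[\hat\eta_n,\kappa]}\mid\{r_i\})$ while the latter is also at least $\overline{\mathcal R}(\Theta[s_n]\mid\{r_i\})$, it suffices to establish the one-sided bound
\[
\sup_{\theta\in\Theta[s_n]}\Bigl\{R(\theta,q_{\Pi[\hat\eta_n,\kappa]}\mid\{r_i\})-R(\theta,q_{\Pi[\eta_n,\kappa]}\mid\{r_i\})\Bigr\}
= o_{p}\bigl(s_n\log(\eta_n^{-1})\bigr),
\]
after which the displayed equivalences close the argument.

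To prove this one-sided bound the idea is to mirror the proof of Theorem~\ref{theorem: adaptive} coordinate by coordinate, replacing the common ratio $r$ by the coordinate-wise ratio $r_i$. Using the closed form of $q_{\Pi[h,\kappa]}$, the $h$-dependence of the coordinate-$i$ factor enters only through the spike weight $\omega_i=\{1+h\Gamma(\kappa)r_i^{-\kappa}\}^{-1}$ and only on $\{x_i=0\}$; moreover $\hat s_n\le s_n$ always (only nonzero coordinates can produce $X_i\ge1$ and the floor is $1\le s_n$), so $\hat\eta_n\le\eta_n$ and both vanish, which makes the $\log(1+h\Gamma(\kappa)r_i^{-\kappa})$-type corrections $O(h)$; and on $\{X_i=0\}$ the count $\hat s_n$ equals its leave-one-out version $\hat s_{n}^{(-i)}:=\max\{1,\#\{j\ne i:X_j\ge1\}\}$, which is independent of $X_i$, so the relevant expectation decouples. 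Carrying this through, the contribution of the coordinates with $\theta_i=0$ is, uniformly in $\theta$, at most $C\eta_n\sum_{i=1}^{n}\Gamma(\kappa)r_i^{-\kappa}=C\Gamma(\kappa)s_n\cdot n^{-1}\sum_{i=1}^{n}r_i^{-\kappa}$, which is $O_{p}(s_n)=o_{p}(s_n\log\eta_n^{-1})$ because $n^{-1}\sum_{i=1}^{n}r_i^{-\kappa}\to\mathbb{E}_{G}[r_1^{-\kappa}]<\infty$ (finite for $\kappa\le1$ by Condition~\ref{Condition: tail and edge}(ii)); and the contribution of the coordinates with $\theta_i>0$ is, up to an analogous $O_p(s_n)$ remainder, bounded by $\sum_{i:\theta_i>0}\mathrm{e}^{-r_i\theta_i}(1-\mathrm{e}^{-\theta_i})\,\Ep_{\theta}[\log(s_n/\hat s_{n}^{(-i)})]$, for which one invokes the three listed properties of $\hat s_n$ exactly as in Theorem~\ref{theorem: adaptive}: the first two (valid here verbatim, and free of the $r_i$, since $1\le\hat s_n\le s_n$) handle the regime $\max_i\theta_i\le(\log s_n)^{-1/2}$, where the prefactors are self-limiting and a Chernoff lower-deviation bound on $\#\{j\ne i:X_j\ge1\}$ applies; the third, re-derived with the rates $r_i$ in place of $r$, handles the regime $\max_i\theta_i>(\log s_n)^{-1/2}$.

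The main obstacle is the randomness and possible unboundedness of $\{r_i\}$. In Theorem~\ref{theorem: adaptive} the error estimates carry absolute constants depending only on the fixed $r$; here the analogous constants involve $\min_{i\le n}r_i$ and $\max_{i\le n}r_i$ and are not uniform over a typical draw $\{r_i\}\sim G^{\otimes\infty}$. The remedy, as in the proof of Proposition~\ref{proposition: minimax risk in random design}, is Condition~\ref{Condition: tail and edge}: by Markov's inequality $\mathbb{P}_{G}(r_1<\varepsilon)\le\varepsilon^{2}\mathbb{E}_{G}[r_1^{-2}]$ and $\mathbb{P}_{G}(r_1>M)\le M^{-2}\mathbb{E}_{G}[r_1^{2}]$, so for suitable $\varepsilon_n\to0$ and $M_n\to\infty$ the number of coordinates with $r_i\notin[\varepsilon_n,M_n]$ is $o_{p}(s_n)$ and those coordinates are absorbed into the remainder terms, while the deterministic averages of $r_i^{\pm\kappa}$ and of $\mathcal{C}_{r_i}$ that appear in the bounds are replaced by their almost-sure limits via the strong law of large numbers; the restriction $\kappa\in(0,1]$, as in Proposition~\ref{proposition: minimax risk in random design}, is precisely what keeps $\mathbb{E}_{G}[r_1^{-\kappa}]$ finite under Condition~\ref{Condition: tail and edge}(ii). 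A secondary delicate point, already present in Theorem~\ref{theorem: adaptive}, is the uniform-in-$\theta$ control of $\Ep_{\theta}[\log(s_n/\hat s_{n}^{(-i)})]$ near the origin, where this expectation can be of order $\log s_n$ but is then multiplied by the vanishing prefactor $\mathrm{e}^{-r_i\theta_i}(1-\mathrm{e}^{-\theta_i})$ (bounded by $\mathcal{C}_{r_i}$) and constrained by the sparsity budget $\sum_{i:\theta_i>0}\theta_i$; reproducing this balancing is what ties the two regimes together.
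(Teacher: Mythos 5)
Your proposal takes essentially the same route as the paper's own (extremely terse) proof: the paper simply asserts that Lemma~\ref{lemma: properties of hats_n} still holds with $\mathrm{e}^{-r\theta_{[j]}}$ replaced by $\mathrm{e}^{-r_{[j]}\theta_{[j]}}$ in the bias and variance bounds, and then instructs the reader to ``follow the same line as in the proof of Theorem~\ref{theorem: adaptive}.'' Your write-up makes explicit what that one sentence is hiding: the coordinate-wise decomposition of the risk difference, the leave-one-out independence of $\hat s_n$ from $X_i$ on $\{X_i=0\}$, the $O_p(s_n)$ bound on the $\theta_i=0$ contribution via $n^{-1}\sum_i r_i^{-\kappa}\to\mathbb{E}_G[r_1^{-\kappa}]$, and the two-regime argument in $\theta_{[1]}$ from Theorem~\ref{theorem: adaptive}.

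The one genuine added value in your proposal is that you flag a uniformity issue the paper glosses over: when $G$ has unbounded support (or mass near $0$), $\max_{i\le n}r_i$ and $\min_{i\le n}r_i$ are not bounded in $n$, so the absolute constants in the Theorem~\ref{theorem: adaptive} argument (which depend on a fixed $r$) do not lift directly to a typical draw of $\{r_i\}$. Your remedy --- truncating the $r_i$'s to $[\varepsilon_n,M_n]$, bounding the number of truncated coordinates using Markov's inequality under Condition~\ref{Condition: tail and edge}, and passing to SLLN limits for the averages $n^{-1}\sum_i\mathcal{C}_{r_i}$ and $n^{-1}\sum_i r_i^{\pm\kappa}$ --- is the right way to close that gap, and matches the spirit of the concentration step (Hoeffding on $\overline{\mathcal{C}}$) already used in the paper's proof of Proposition~\ref{proposition: minimax risk in random design}. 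Note, though, that $\mathbb{E}_G[r_1^{-\kappa}]<\infty$ already follows from Condition~\ref{Condition: tail and edge}(ii) for all $\kappa\le 2$, so $\kappa\le 1$ is sufficient but not the sharp cutoff for that particular moment; the paper's restriction to $\kappa\in(0,1]$ is inherited from Lemma~\ref{lemma: exact minimaxity within sparse Poisson models in inhomogeneous case} rather than from moment considerations alone.
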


Detailed discussions for the results herein are given in Appendix \ref{subsection: Discussion: MCAR}.

\begin{remark}
\label{remark: optimal choice}
We remark that the optimal scaling (in the sense of Proposition \ref{proposition: optimal scaling}) for this set-up is $\overline{L}$, where
\begin{align*}
\overline{L} = \overline{\mathcal{C}} /  \overline{\mathcal{K}} \quad \text{ with }  \overline{\mathcal{K}}= \Gamma(\kappa+1)\sum_{i=1}^{n}\Big{\{}r_{i}^{-\kappa}-(r_{i}+1)^{-\kappa}\Big{\}}\Big{/}(n\kappa).
\end{align*}
The derivation follows almost the same line as in the proof of Proposition \ref{proposition: optimal scaling} and is omitted.
\end{remark}

\section{Simulation studies and application to real data}
\label{section: numerical experiments}

\subsection{Simulation studies}
\label{subsection: simulation studies}

This subsection presents simulation studies to compare the performance of various predictive densities. 
The codes for implementing the proposed method are available at \url{https://github.com/kyanostat/sparsepoisson}.

Consider a sparse Poisson model described as follows.
Parameter $\theta$ and observations $X$ and $Y$ are drawn from
\begin{align*}
\theta_{i} &\sim \nu_{i} e_{S,i} \ (i=1,\dots,n),\\
 X \mid \theta &\sim \otimes_{i=1}^{n}\mathrm{Po}(r\theta_{i}),\
 Y \mid \theta \sim \otimes_{i=1}^{n}\mathrm{Po}(\theta_{i}),
 \text{ and }
 X \indep Y \mid \theta,
\end{align*}
respectively. Here, 
\begin{itemize}
    \item $\nu_{1},\ldots,\nu_{n}$ are independent samples from the Gamma distribution 
with a shape parameter $10$ and a scale parameter $1$;
\item $S$ is drawn from the uniform distribution on all subsets having exactly $s$ elements;
\item $\nu_{1},\ldots,\nu_{n}$ and $S$ are independent.
\end{itemize}
Here for a subset $J\subset \{1,\ldots,n\}$, $e_{J}$ indicates the vector
whose $i$-th component is 1 if $i\in J$ and 0 otherwise.
We examine two cases for $(n, s ,r)$,
and generate 500 current observations $X$'s
and 500 future observations $Y$'s.
See Appendix \ref{Appendix: supplemental experiments} in the supplementary material
for the results with different choices of $(n,s,r)$.

We compare the following four predictive densities:
\begin{itemize}
\item The Bayes predictive density based on $\Pi[L^{*}\hat{\eta}_{n}, \kappa]$ with $L^{*}$ in  Proposition \ref{proposition: optimal scaling};
\item The Bayes predictive density based on the shrinkage prior in \cite{Komaki(2004)};
\item The Bayes predictive density based on the Gauss hypergeometric prior  in \cite{DattaandDunson(2016)};
\item The plug-in predictive density based on an $\ell_1$-penalized estimator.
\end{itemize}
The second predictive density is shown in \cite{Komaki(2004)} to dominate the Bayes predictive density based on the Jeffreys prior.
This predictive density has a hyper-parameter $\beta$ and in simulation studies it is fixed to be 1.
The third predictive density employs the global-local prior proposed in \cite{DattaandDunson(2016)} and the specification of the hyper-parameters follows the online support pages the authors provide.

The performance of predictive densities is evaluated by the following three measures:
\begin{itemize}
\item the mean of the $\ell_1$ distance ($\sum_{i=1}^{n}|u_{i}-v_{i}|$ for $u,v\in\R^{n}$) between the mean of a predictive density and a future observation,
\item the predictive log likelihood, that is, the log of the value of a predictive density at sampled $Y$ and $X$, and
\item  the (empirical) coverage probability of $Y$ on the basis of the joint 90\%-prediction set constructed by a predictive density.
\end{itemize}

Tables 
\ref{table: simulation under exact sparsity  in homogeneity, (n,s,r)=(200,5,1)} 
and
\ref{table: simulation under exact sparsity  in homogeneity, (n,s,r)=(200,5,20)}
show 
the results of the comparison.
The following abbreviations are used in the tables.
The Bayes predictive density proposed in \cite{DattaandDunson(2016)} is
abbreviated to GH.
The Bayes predictive density proposed in \cite{Komaki(2004)} is abbreviated to K04.
The plug-in density based on an $\ell_{1}$-penalized estimator with regularization parameter $r\lambda$ is abbreviated to $\ell_{1}$ ($\lambda$).
The abbreviation $\ell_{1}$ distance represents a mean $\ell_{1}$ distance.
The abbreviation PLL represents a predictive log likelihood.
The abbreviation $90\%$CP represents the empirical coverage probability based on
a $90\%$-prediction set.

\begin{table}[h]
\caption{Comparison of predictive densities with $(n,s,r)=(200,5,1)$: the $\ell_1$ distance, PLL, and $90\%$CP represent the mean $\ell_1$ distance, the predictive log likelihood, and the empirical coverage probability based on a $90\%$-prediction set, respectively. 
For each result, the averaged value is followed by the corresponding standard deviation. Underlines indicate the best performance.}
\label{table: simulation under exact sparsity  in homogeneity, (n,s,r)=(200,5,1)}

\centering

\begin{tabular}{|c||c|c|c|c|c|} \hline
      &$\Pi[L^{*}\hat{\eta}_{n},0.1]$
      &$\Pi[L^{*}\hat{\eta}_{n},1.0]$
      & GH & K04 & $\ell_{1}$ ($\lambda=0.1$) 
      \\ \hline\hline
      $\ell_{1}$ distance & 
      \underline{18.8} (5.8)& 21.9 (6.8)& 104 (4.9)  & 96.5 (8.1) & 22.1 (7.8)
      \\
      PLL &
      \underline{-15.4} (1.8)& -16.1 (1.6) &-66.3 (3.3) & -86.2 (8.8) & -Inf
      \\
      90\%CP (\%) &  92.6 (0.1)& 95.8 (0.1)& \underline{92.0} (1.5) & 40.5 (24.4) & 49.4 (21.6)
      \\
      \hline
\end{tabular}
\end{table}
\begin{table}[h]
\caption{Comparison of predictive densities with $(n,s,r)=(200,5,20)$: the $\ell_1$ distance, PLL, and $90\%$CP represent the mean $\ell_1$ distance, the predictive log likelihood, and the empirical coverage probability based on a $90\%$-prediction set, respectively. 
For each result, the averaged value is followed by the corresponding standard deviation. Underlines indicate the best performance.
}
\label{table: simulation under exact sparsity  in homogeneity, (n,s,r)=(200,5,20)}
\centering
\begin{tabular}{|c||c|c|c|c|c|c|} \hline
      &
      $\Pi[L^{*}\hat{\eta}_{n},0.1]$&
      $\Pi[L^{*}\hat{\eta}_{n},1.0]$&
      GH & K04 & $\ell_{1}$ ($\lambda=0.1$)
      \\ \hline\hline
      $\ell_{1}$ distance & 
      \underline{14.0} (4.9)&   14.5 (4.5)&   15.7 (1.7)  & 22.5 (5.2) & 14.1 (4.5)
      \\
      PLL &
      \underline{-13.3} (1.6)&  -13.5 (1.5) & -15.6 (1.5) &  -21.6 (2.2) & -Inf
      \\
      90\%CP (\%) &
      \underline{90.0} (0.0)& 89.4 (0.0) & 97.6 (0.7) &  97.5 (1.4) &  86.3 (3.9)
      \\
      \hline
\end{tabular}

\end{table}

The results have been summarized as follows.
In regard to the $\ell_1$ distances, 
samples from the predictive density based on $\Pi[L^{*}\hat{\eta}_{n},0.1]$ 
are closer to future observations than those of three other classes of predictive densities.
In regard to the empirical coverage probabilities,
the  predictive densities based on $\Pi[L^{*}\hat{\eta}_{n},0.1]$ and the Gauss hypergeometric prior
give the empirical coverage probabilities of $Y$ that are relatively close to the nominal level.
The prediction set of the plug-in predictive density based on the $\ell_{1}$-penalized estimator is too narrow to cover future observations.
This is mainly because 
for this plug-in predictive density,
an $\ell_1$-penalized estimator
returns zero for a coordinate at which the current observation is zero
and most of the marginal predictive intervals degenerate into zero.
This degeneracy also induces the divergence of a predictive log likelihood value of the plug-in predictive density based on an $\ell_1$-penalized estimator.

Supplemental material provides additional numerical experiments including quasi-sparse, MCAR, large $s_{n}$ setings, and the comparison with the recently developed prior distribution \cite{Hamuraetal}.
These show that the proposed predictive density with $\kappa=0.1$ has stable predictive performance and 
this value of $\kappa$ is suggested as a good default choice.

\subsection{Application to real data}
\label{subsection: applications to real data}
We apply our methods to Japanese crime data
from an official database called \textit{the number of crimes in Tokyo by type and town} \cite{Metropolitan}.
This database reports the total numbers of crimes in Tokyo Prefecture.
They are classified by town and also by the type of crimes.
A motivation for this analysis comes from the importance of taking measures against future crimes by utilizing past crime data.

We use pickpocket data from 2012 to the first half of 2018 at 978 towns in eight wards (Bunkyo Ward, Chiyoda Ward, Chuo Ward, Edogawa Ward, Koto Ward, Minato Ward, Sumida Ward, and Taito Ward). 
Figure \ref{figure: Japanese crime} shows total counts of pickpockets from 2012 to 2017 for all towns in the wards.
The scale of the pickpocket occurrences in each town is expressed by a gradation of colors:
there have occurred more pickpockets in a deeper-colored town over 6 years.
There have not occurred any pickpocket in white-colored towns.
As seen from Figure \ref{figure: Japanese crime}, 
the data have zero or near-zero counts at a vast majority of locations, while having relatively large counts at certain locations.

\begin{figure}[h]
\centering\includegraphics[width=14cm]{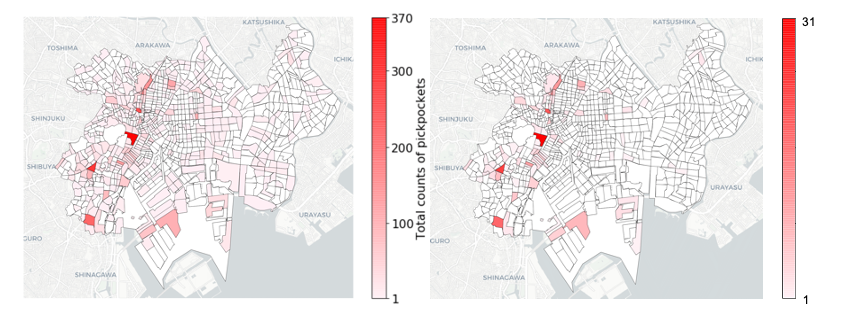}
\caption{Pickpocket data: (Left) Total numbers of pickpockets from 2012 to 2017 in eight wards (Bunkyo Ward, Chiyoda Ward, Chuo Ward, Edogawa Ward, Koto Ward, Minato Ward, Sumida Ward, and Taito Ward). There have occurred more pickpockets in a deeper-colored town over 6 years.
There have occurred no pickpockets in white-colored towns. (Right) Coordinate-wise medians of the proposed predictive density $q_{\Pi[\overline{L}\hat{\eta}_{n},0.1]}$ for the pickpockets in the first half of 2018.}
\label{figure: Japanese crime}
\end{figure}

The experimental settings are as follows.
The data at the 978 towns from 2012 to 2017 are used as current observations.
The data in the first half of 2018 are used as future observations.
Since the counts in the first half of 2018 would be considered as the half of the total counts in 2018, in general,
the ratio $r$ of sample sizes is set as $r=12$.
However, some observations are missing
because several towns, though in rare cases, did not report the counts.

As in Subsection \ref{subsection: simulation studies}, 
we compare the proposed predictive density $q_{\Pi[\overline{L}\hat{\eta}_n,\kappa]}$ (with $\overline{L}$ in Remark \ref{remark: optimal choice})
to the three existing predictive densities, that is, 
the Bayes predictive density GH based on a Gauss hypergeometric prior,
the Bayes predictive density K04 based on the shrinkage prior,
and the plug-in predictive density based on an $\ell_{1}$-regularized estimator.
An estimator $\hat{s}_{n}$ used in $q_{\Pi[\overline{L}\hat{\eta}_n,\kappa]}$ is set as the simple estimator described before Theorem \ref{theorem: adaptive}
with a slight modification: we use the mean of the numbers of values greater than 1 in each year as $\hat{s}_{n}$.
The value of $\kappa$ is fixed to be 0.1 as the numerical simulations suggest.
We evaluate these predictive densities on the basis of the following two measures:
\begin{itemize}
\item  The weighted $\ell_{1}$ distance with the weight proportional to $r$ between the mean vector of a predictive density and the data obtained in the first half of 2018, and
\item the predictive log likelihood at the data obtained in the first half of 2018. 
\end{itemize}
%
\begin{table}[h]
\caption{
Comparison of predictive densities in pickpocket data by the weighted $\ell_{1}$ distance (W-$\ell_{1}$ distance) and
the predictive log likelihood (PLL):
underlines indicate the best performances. 
}
\label{table: pickpocket}
\centering
\begin{tabular}{|c||c|c|c|c|c|c|} \hline
      & $\Pi[\overline{L}\hat{\eta}_{n},0.1]$ & 
      GH & K04 & $\ell_{1}$ ($\lambda=0.1$)  \\ \hline\hline
      W-$\ell_{1}$ distance & \underline{273} & 
      293  & \underline{273} & 297 \\
      PLL & \underline{-399} & 
      \underline{-399} & -429 & -Inf \\ \hline
\end{tabular}

\end{table}
%

Table \ref{table: pickpocket} shows a summary of comparisons.
In all measures, the proposed predictive density $q_{\Pi[\overline{L}\hat{\eta}_{n},0.1]}$
has the best scores.
The predictive density provides not only the mean but also the other statistics.
The figure to the right of Figure \ref{figure: Japanese crime} displays the coordinate-wise medians of the proposed predictive density. It highlights crime spots at which many pickpockets have occurred over the past 6 year, and at the same time shows potential crime spots that the spike-and-slab prior structure suggests.

\section{Proofs of main theorems}
\label{section: proofs for the main theorems}

This section presents proofs for the main theorems in Section \ref{section: predictive density estimation in sparse Poisson models}.

\subsection{Supporting lemmas}
\label{subsection: supporting lemmas for main theorems}

We begin with stating the supporting lemmas.
For an estimator $\hat{\theta}$,
let 
\begin{align*}
R_{\mathrm{e}}(\theta,\hat{\theta}):=R(\theta,q(\cdot\mid\hat{\theta})) 
=  \Ep_{\theta} \sum_{i=1}^{n}\left[\theta_{i}\log\frac{\theta_{i}}{\hat{\theta}_{i}(X)} - \theta_{i} + \hat{\theta}_{i}(X)\right].
\end{align*}
For a prior $\Pi$ of $\theta$, let
\begin{align*}
\hat{\theta}_{\Pi,i}(x;t) = \int \theta_{i} p(x\mid t \theta) d\Pi(\theta) \bigg{/} \int p(x \mid t\theta) d\Pi(\theta), \ i=1,2,\ldots,n,
\end{align*}
and let $\hat{\theta}_{\Pi}(x;t):=( \hat{\theta}_{\Pi}(x;t),\ldots, \hat{\theta}_{\Pi,n}(x;t))$.

The first lemma reduces bounding $R(\theta, q_{\Pi})$ to bounding $R_{\mathrm{e}}(\theta,\hat{\theta}_{\Pi})$.
The proof is given in Section \ref{section: proofs for auxiliary lemmas}.
\begin{lemma}
\label{lemma: Kullback--Leibler formula}
Fix a prior $\Pi$ of $\theta$.
If $\hat{\theta}_{\Pi}(x;t)$ based on $\Pi$ is strictly larger than $0$ for any $x\in\mathbb{N}^{n}$ and any $t\in(r,1+r)$,
then, we have
\begin{align*}
R(\theta,q_{\Pi})=\int_{r}^{r+1} \frac{R_{\mathrm{e}}(t\theta,t\hat{\theta}_{\Pi}(\cdot;t))}{t} dt.
\end{align*}
\end{lemma}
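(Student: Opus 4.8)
The plan is to turn the predictive Kullback--Leibler risk into a difference of two divergences between the true sampling law and the Bayes marginal, and then recover the stated integral by the fundamental theorem of calculus. Throughout I write $p_{t}(z\mid\theta):=\prod_{i=1}^{n}\mathrm{e}^{-t\theta_{i}}(t\theta_{i})^{z_{i}}/z_{i}!$ for the $\mathrm{Po}(t\theta)$ mass function, so that $p(\cdot\mid\theta)=p_{r}(\cdot\mid\theta)$ and $q(\cdot\mid\theta)=p_{1}(\cdot\mid\theta)$; I put $m_{t}(z):=\int p_{t}(z\mid\vartheta)\,d\Pi(\vartheta)$, and note that $\hat{\theta}_{\Pi,i}(z;t)=m_{t}(z)^{-1}\int\vartheta_{i}p_{t}(z\mid\vartheta)\,d\Pi(\vartheta)$ is the posterior mean of $\vartheta_{i}$ given a $\mathrm{Po}(t\vartheta)$ observation. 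The first step exploits sufficiency: with $w=x+y$, a direct computation gives $p_{r}(x\mid\theta)\,q(y\mid\theta)=b(x\mid w)\,p_{r+1}(w\mid\theta)$, where $b(x\mid w)=\prod_{i=1}^{n}\binom{w_{i}}{x_{i}}(r/(r+1))^{x_{i}}(1/(r+1))^{y_{i}}$ does not involve $\theta$; integrating against $\Pi$ yields $\int p_{r}(x\mid\vartheta)q(y\mid\vartheta)\,d\Pi(\vartheta)=b(x\mid w)\,m_{r+1}(w)$. Dividing, the factors $b(x\mid w)$ cancel and
\begin{align*}
\log\frac{q(y\mid\theta)}{q_{\Pi}(y\mid x)}
=\log\frac{p_{r+1}(w\mid\theta)}{m_{r+1}(w)}-\log\frac{p_{r}(x\mid\theta)}{m_{r}(x)}.
\end{align*}
Taking the expectation under $(X,Y)\sim p(\cdot\mid\theta)q(\cdot\mid\theta)$ and using that $W=X+Y\sim\mathrm{Po}((r+1)\theta)$ while $X\sim\mathrm{Po}(r\theta)$, I obtain $R(\theta,q_{\Pi})=D(r+1)-D(r)$, where $D(t):=\Ep_{Z\sim\mathrm{Po}(t\theta)}[\log\{p_{t}(Z\mid\theta)/m_{t}(Z)\}]$. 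The positivity hypothesis on $\hat{\theta}_{\Pi}$ forces $m_{t}>0$, so all these quantities are well defined.

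It then remains to show $D'(t)=R_{\mathrm{e}}(t\theta,t\hat{\theta}_{\Pi}(\cdot;t))/t=\Ep_{Z\sim\mathrm{Po}(t\theta)}\big[\sum_{i}\{\theta_{i}\log(\theta_{i}/\hat{\theta}_{\Pi,i}(Z;t))-\theta_{i}+\hat{\theta}_{\Pi,i}(Z;t)\}\big]$, after which $R(\theta,q_{\Pi})=\int_{r}^{r+1}D'(t)\,dt$ finishes the proof. Writing $D(t)=\Ep_{Z\sim\mathrm{Po}(t\theta)}[\ell_{t}(Z)]$ with $\ell_{t}:=\log(p_{t}(\cdot\mid\theta)/m_{t})$, I would split $\frac{d}{dt}$ into the contribution of $\ell_{t}$ varying and that of the law of $Z$ varying, the latter handled by the Poisson ``integration by parts'' identity $\frac{d}{dt}\Ep_{Z\sim\mathrm{Po}(t\theta)}[g(Z)]=\sum_{i}\theta_{i}\Ep[g(Z+e_{i})-g(Z)]$ ($e_{i}$ the $i$-th unit vector). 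Two elementary identities then do the rest: differentiating under the integral sign, $\partial_{t}\log m_{t}(z)=\sum_{i}(z_{i}/t-\hat{\theta}_{\Pi,i}(z;t))$, so that $\partial_{t}\ell_{t}(z)=\sum_{i}(\hat{\theta}_{\Pi,i}(z;t)-\theta_{i})$; and from $p_{t}(z+e_{i}\mid\theta)/p_{t}(z\mid\theta)=t\theta_{i}/(z_{i}+1)$ together with $m_{t}(z+e_{i})/m_{t}(z)=t\hat{\theta}_{\Pi,i}(z;t)/(z_{i}+1)$ one gets $\ell_{t}(z+e_{i})-\ell_{t}(z)=\log(\theta_{i}/\hat{\theta}_{\Pi,i}(z;t))$. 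Substituting, the first contribution is $\sum_{i}\Ep[\hat{\theta}_{\Pi,i}(Z;t)-\theta_{i}]$ and the second is $\sum_{i}\theta_{i}\Ep[\log(\theta_{i}/\hat{\theta}_{\Pi,i}(Z;t))]$, whose sum is exactly the required expression.

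The main obstacle is analytic bookkeeping rather than a new idea: one must justify differentiating the series $D(t)=\sum_{z}p_{t}(z\mid\theta)\ell_{t}(z)$ term by term and applying the Poisson integration-by-parts, which calls for a dominated-convergence argument with domination supplied by Poisson tail bounds together with the hypothesis $\hat{\theta}_{\Pi}(\cdot;t)>0$ (this keeps $\ell_{t}(z)$ and $\log\hat{\theta}_{\Pi,i}(z;t)$ under control, locally uniformly in $t\in(r,r+1)$). One also needs $m_{r}(x)>0$ and $m_{r+1}(w)>0$, so that $q_{\Pi}$ is a genuine density and $D(r),D(r+1)$ are finite; this follows from the hypothesis extended to the endpoints by continuity of $t\mapsto m_{t}(z)$, and if $R(\theta,q_{\Pi})=+\infty$ the identity holds with both sides infinite, the same computation going through with inequalities. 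An alternative derivation that avoids explicit differentiation partitions $[r,r+1]$, uses additivity of the Kullback--Leibler divergence along the increments of the pooled Poisson process, and observes that, conditionally on the pooled count, the allocation of counts across sub-intervals is free of $\theta$, so that no information is lost in the reduction to $Y$; letting the mesh tend to zero turns the resulting sum into $\int_{r}^{r+1}D'(t)\,dt$.
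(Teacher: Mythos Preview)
Your argument is correct and follows essentially the same route as the paper: reduce via sufficiency of $X+Y$ to $R(\theta,q_{\Pi})=D(r+1)-D(r)$, then differentiate $D(t)$ using the Poisson Stein-type identity (what the paper cites as ``Hudson's lemma'') together with the relations $m_{t}(z+e_{i})/m_{t}(z)=t\hat{\theta}_{\Pi,i}(z;t)/(z_{i}+1)$ and $\partial_{t}\log m_{t}(z)=\sum_{i}(z_{i}/t-\hat{\theta}_{\Pi,i}(z;t))$. Your split of $D'(t)$ into the $\partial_{t}\ell_{t}$ contribution and the distributional-shift contribution is exactly the paper's decomposition (their equation leading to identities (\ref{identity 1}) and (\ref{identity 2})), only organized a bit more transparently.
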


The second and third lemma display useful formulae for Poisson random variables. The proofs are easy and we omit them.
\begin{lemma}
\label{lemma: exp of inverse}
Let $X_{1}$ be a random variable from the Poisson distribution with mean $\lambda$. Then, we have
\[\Ep_{\lambda}\left[\frac{1}{X_{1}+1}\right] = \frac{1- \mathrm{e}^{-\lambda}}{\lambda}.\]
\end{lemma}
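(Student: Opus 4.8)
The plan is to evaluate the expectation directly from the series definition of the Poisson law and to recognize the resulting sum as a reindexed exponential series. First I would write
\[
\Ep_{\lambda}\left[\frac{1}{X_{1}+1}\right] = \sum_{k=0}^{\infty} \frac{1}{k+1}\,\mathrm{e}^{-\lambda}\frac{\lambda^{k}}{k!}.
\]
The crucial observation is the elementary identity $\frac{1}{k+1}\cdot\frac{\lambda^{k}}{k!} = \frac{\lambda^{k}}{(k+1)!} = \frac{1}{\lambda}\cdot\frac{\lambda^{k+1}}{(k+1)!}$, valid for $\lambda>0$. Pulling the factor $\mathrm{e}^{-\lambda}/\lambda$ out of the sum and substituting $j=k+1$ turns the expression into $\frac{\mathrm{e}^{-\lambda}}{\lambda}\sum_{j\geq 1}\lambda^{j}/j! = \frac{\mathrm{e}^{-\lambda}}{\lambda}(\mathrm{e}^{\lambda}-1)$, which simplifies to $(1-\mathrm{e}^{-\lambda})/\lambda$, the asserted formula.

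I expect essentially no obstacle here; the single point deserving a sentence of justification is that the term-by-term manipulation and reindexing are legitimate, which is immediate since every summand is nonnegative and $\sum_{j}\lambda^{j}/j!$ converges (so Tonelli applies trivially). As a consistency check one can let $\lambda\downarrow 0$: the right-hand side tends to $1$, matching the fact that $1/(X_{1}+1)=1$ almost surely when $X_{1}\equiv 0$; and for $\lambda\to\infty$ both sides vanish like $\lambda^{-1}$, as expected.
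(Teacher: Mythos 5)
Your proof is correct. The paper explicitly omits the proof of this lemma (``The proofs are easy and we omit them''), so there is no paper argument to compare against, but the direct series computation you give — writing out the Poisson expectation, absorbing $1/(k+1)$ into the factorial, reindexing, and recognizing the exponential series — is exactly the standard elementary argument the authors had in mind when declaring it easy. Your remark on Tonelli and the two limiting sanity checks are fine but more than is needed.
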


\begin{lemma}
\label{lemma: Bennett}
Let $X_{1}$ be a random variable from the Poisson distribution with mean $\lambda$. Then, we have
\[
\Pr ( X_{1} - \lambda  \leq - x ) \leq \exp\left(-\frac{x^{2}}{2\lambda}\right), \ 0 \leq x \leq \lambda.
\]
\end{lemma}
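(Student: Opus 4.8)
The plan is to apply the exponential (Chernoff) bound to the left tail of $X_{1}$ and then replace the Poisson cumulant generating function by a quadratic upper bound, which delivers the Gaussian-type estimate directly without any delicate optimization.

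First I would fix $t>0$ and note that, since $X_{1}\geq 0$, the event $\{X_{1}-\lambda\leq -x\}$ equals $\{e^{-tX_{1}}\geq e^{t(\lambda-x)}\}$; Markov's inequality then gives
\[
\Pr(X_{1}-\lambda\leq -x)\leq e^{t(\lambda-x)}\,\Ep_{\lambda}\!\big[e^{-tX_{1}}\big].
\]
The moment generating function is explicit, $\Ep_{\lambda}[e^{-tX_{1}}]=\exp(\lambda(e^{-t}-1))$, by summing the Poisson series, so
\[
\Pr(X_{1}-\lambda\leq -x)\leq \exp\!\big(\lambda(e^{-t}-1)+t(\lambda-x)\big).
\]

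Next I would invoke the elementary inequality $e^{-t}\leq 1-t+\tfrac{1}{2}t^{2}$ for $t\geq 0$, which is immediate since $g(t):=1-t+\tfrac{1}{2}t^{2}-e^{-t}$ satisfies $g(0)=g'(0)=0$ and $g''(t)=1-e^{-t}\geq 0$ on $[0,\infty)$. Substituting this into the exponent leaves a quadratic in $t$,
\[
\Pr(X_{1}-\lambda\leq -x)\leq \exp\!\big(\tfrac{\lambda}{2}t^{2}-tx\big),
\]
and choosing $t=x/\lambda$ (which lies in $[0,1]$ under the hypothesis $0\leq x\leq\lambda$, though any $t\geq 0$ would serve) makes the exponent equal to $-x^{2}/(2\lambda)$, which is exactly the claimed bound.

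There is no genuine obstacle in this argument; the only step that requires any checking is the quadratic bound on $e^{-t}$, which is the left-tail specialization of Bennett's inequality and is verified in one line by the convexity computation above. I note also that the restriction $x\leq\lambda$ is needed only to keep the statement in its customary form: for $x>\lambda$ the left-hand side vanishes since $X_{1}$ is nonnegative, so the bound holds trivially there as well.
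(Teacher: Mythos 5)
Your proof is correct. The paper does not supply a proof of this lemma (it is stated with the remark that the proofs of Lemmas 5.2 and 5.3 are easy and omitted), and the Chernoff argument you give — bounding the left tail by the Poisson moment generating function, replacing $e^{-t}$ by the quadratic majorant $1-t+\tfrac12 t^2$ valid for $t\geq 0$, and optimizing at $t=x/\lambda$ — is exactly the standard way to obtain this one-sided Gaussian-type bound. Your side remark is also right: the hypothesis $0\leq x\leq\lambda$ is not needed for the argument since any $t\geq 0$ is admissible and the event has probability zero when $x>\lambda$.
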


\subsection{Proof of Theorem \ref{theorem: exact minimaxity within sparse Poisson models}}
\label{subsection: Proof of theorem exact minimaxity within sparse Poisson models}

\subsubsection*{Step 1: Lower bound on $\mathcal{R}(\Theta[s_{n}])$}

The Bayes risk minimization with respect to \textit{block-independent} priors
will give a lower bound on $\mathcal{R}(\Theta[s_{n}])$.
Let $\Pi_{\mathrm{B},\nu}(d\theta)$ with $\nu>0$ be a \textit{block-independent} prior  built as follows:
divide $\{1,2,\ldots,n\}$ into contiguous blocks $\{B_{j}: j = 1 , 2 , \ldots , s_{n}\} $ with each length $m_{n}:=\lfloor \eta_{n}^{-1} \rfloor$.
In each block $B_{j}$,
draw $(\theta_{1+m_{n}(j-1)},\allowbreak \ldots,\theta_{m_{n}j})$ independently according to a single spike prior with spike strength $\nu>0$,
where a single spike prior with spike strength $\nu>0$ is the distribution of $\nu e_{I}$
with a uniformly random index $I\in\{1,\ldots,m_{n}\}$ and a unit length vector $e_{i}$ in the $i$-th coordinate direction.
Finally, set $\theta_{i}=0$ for the remaining $n-m_{n}s_{n}$ components.

Start with deriving the explicit form of $\hat{\theta}_{\Pi_{\mathrm{B},\nu}}$.
Let $\mathcal{X}_{j}:= \{ x^{(j)} =(x_{1},\ldots,x_{m_{n}}) : \|x^{(j)} \|_{0}\leq 1 \} $ ($j=1,2,\ldots,s_{n}$).
Observe that the Bayes formula yields, for $j=1,\ldots,s_{n}-1$ and for $i=1+m_{n}(j-1), \ldots, m_{n}j$,
\begin{align}
\hat{\theta}_{\Pi_{\mathrm{B},\nu}, i}(x^{(j)})
&=\frac{\sum_{k=1}^{m_{n}} \int \prod_{l\neq i}\{\theta_l^{x_{l}}\} \theta_i^{x_{i}+1} d\delta_{0}(\theta_1)\cdots d\delta_{\nu}(\theta_k) \cdots d\delta_{0}(\theta_{m_n} ) }{ \sum_{k=1}^{m_{n}} \int \prod_{l=1}^{m_{n}} \theta_l^{x_{l}} d\delta_{0}(\theta_1)\cdots d\delta_{\nu}(\theta_k) \cdots d\delta_{0}(\theta_{m_n} ) }.
\label{Bayes estimator}
\end{align}
This implies that
for each $j=1,\ldots,s_{n}$ and for each $x^{(j)} \in \mathcal{X}_{j}$,
\begin{align}
\hat{\theta}_{\Pi_{\mathrm{B},\nu}, i}(x^{(j)})=
\begin{cases} 
\nu / m_{n} & \text{if $\|x^{(j)}\|_{0}=0$,} \\ 
\nu & \text{if $x^{(j)}_{i}\neq 0$ and $x^{(j)}_{k}=0$ for $k\neq i$,} \\
0   & \text{if otherwise,}
\end{cases}
\label{explicit form}
\end{align}
as well as that for $j=1,\ldots,s_{n}$, 
\begin{align}
\sum_{i=1+m_{n}(j-1)}^{m_{n}j} \hat{\theta}_{\Pi_{\mathrm{B},\nu}, i}(x^{(j)})= \nu
\text{ for } x^{(j)} \text{ such that }\|x^{(j)} \|_{0} \le 1.
\label{sum explicit form}
\end{align}

Fix $\theta$ in the support of $\Pi_{\mathrm{B},\nu}$. 
Then, we have
\begin{align*}
R_{\mathrm{e}}(t\theta,t\hat{\theta}_{\Pi_{\mathrm{B},\nu}})
&=
\Ep_{t\theta}\sum_{i=1}^{n}t\theta_{i}\log \frac{\theta_{i}}{\hat{\theta}_{\Pi_{\mathrm{B},\nu},i}(X)}
-t\sum_{i=1}^{n}(\theta_{i} - \Ep_{t\theta}[\hat{\theta}_{\Pi_{\mathrm{B},\nu},i}(X)])
\\
&=\sum_{j=1}^{s_{n}}\sum_{i=1+m_{n}(j-1)}^{m_{n}j}
\Ep_{t\theta}t\theta_{i}\log \frac{\theta_{i}}{\hat{\theta}_{\Pi_{\mathrm{B},\nu},i}(X)},
\end{align*}
where the second equality holds from (\ref{sum explicit form}).
Letting $i(j)\in \{1+m_{n}(j-1) , \ldots , m_{n}j \}$ denote the index with $\theta_{i(j)}=\nu$,
we further have
\begin{align}
R_{\mathrm{e}}(t\theta,t\hat{\theta}_{\Pi_{\mathrm{B},\nu}})
\ge \sum_{j=1}^{s_{n}}\Pr(X_{i(j)}=0)t\nu\log \frac{\nu}{\nu / m_{n}}
= s_{n} \mathrm{e}^{-t\nu}t\nu\log m_{n}.\nonumber
\end{align}
This, together with Lemma \ref{lemma: Kullback--Leibler formula}, gives
\begin{align*}
R(\theta,q_{\Pi_{\mathrm{B},\nu}})
= \int_{r}^{r+1} \frac{R_{\mathrm{e}}(t\theta,t\hat{\theta}_{\Pi_{\mathrm{B},\nu}})}{t}dt
\geq \{\mathrm{e}^{-r\nu}-\mathrm{e}^{-(r+1)\nu}\}s_{n}\log  m_{n}.
\end{align*}
Taking expectation of $R(\theta,q_{\Pi_{\mathrm{B},\nu}})$ with respect to $\Pi_{\mathrm{B},\nu}$
yields
\begin{align*}
\mathcal{R}(\Theta[s_{n}]) 
&\geq \inf_{\hat{q}}\int R(\theta,\hat{q}) d\Pi_{\mathrm{B},\nu}(\theta)
= \int R(\theta,q_{\Pi_{\mathrm{B},\nu}})d\Pi_{\mathrm{B},\nu}(\theta)\\
&\geq \{\mathrm{e}^{-r\nu}-\mathrm{e}^{-(r+1)\nu}\}s_{n}\log  m_{n}.
\end{align*}
Maximizing the rightmost hand side in the above inequality with respect to $\nu$
presents the desired lower bound
$\mathcal{R}(\Theta[s_{n}]) \geq \mathcal{C} s_{n}\log  m_{n}$, which completes Step 1.

\subsubsection*{Step 2: Upper bound on $\mathcal{R}(\Theta[s_{n}])$}

Let $\Pi$ be an i.i.d.~prior $\Pi$ and consider the coordinate-wise Kullback--Leibler risk of the Bayes predictive density
$q_{\Pi}$:
\begin{align*}
\rho(\lambda) :=
\Ep_{\lambda} \log \left[\frac{\exp(-\lambda)\lambda^{Y_{1}}/Y_{1}!}{q_{\Pi^{*}}(Y_{1}\mid X_{1})}\right],\
\lambda>0,
\end{align*}
where
$q_{\Pi}(y_{i}\mid x_{i})$ is the marginal distribution of $q_{\Pi}$.
Consider the following high-level condition on $\Pi$:
\begin{condition}\label{posterior condition}
There exist constants $K\ge1$, $C_{1},C_{2},C_{3},C_{4}>0$, $C_{5}<K$, $C_{6}>0$ not depending on $n$ for which we have
\begin{enumerate}
\item[(P1)] $C_{1}\eta_{n} \le \hat{\theta}_{\Pi,i}(X_{i};t) \le C_{2}\eta_{n}$ 
for $X_{i}=0$;
\item[(P2)] $C_{3}\le \hat{\theta}_{\Pi,i}(X_{i};t) \le C_{4}$ 
for $1 \le X_{i} \le K$;
\item[(P3)] $0< (X_{i}-C_{5})/t \le \hat{\theta}_{\Pi,i}(X_{i};t) \le (X_{i}+C_{6})/t$ for $K<X_{i}$.
\end{enumerate}
\end{condition}
Under this condition, it will be shown that
\begin{itemize}
\item $\rho(0)= O(\eta_{n})$;
\item $\sup_{\lambda>0}\rho(\lambda) \leq ( \mathcal{C} +o(1))s_{n}\log \eta_{n}^{-1}$,
\end{itemize}
from which we will conclude
\begin{align*}
\mathcal{R}(\Theta[s_{n}]) 
\le
\sup_{\theta\in\Theta[s_{n}]}R(\theta,q_{\Pi})
&= (n-s_{n})\rho(0) + s_{n}\sup_{\lambda>0}\rho(\lambda) \\
&\leq (n-s_{n})O(\eta_{n}) + (\mathcal{C} + o(1)) s_{n} \log \eta_{n}^{-1}.
\end{align*}
Note that for $\Pi[\eta_{n},\kappa]$ with $\kappa>0$,
the Bayes formula gives
\begin{align*}
\hat{\theta}_{\Pi[\eta_{n},\kappa],1}(x_{1};t) 
=
\frac{ 0^{x_{1}+1} + \eta_{n}\Gamma(x_{1}+\kappa+1)/t^{x_{1}+\kappa+1} }
{ 0^{x_{1}} + \eta_{n}\Gamma(x_{1}+\kappa)/t^{x_{1}+\kappa}  }
=
\begin{cases}
\frac{\eta_{n}\Gamma(\kappa+1)/t^{\kappa+1}}{1+\eta_{n}\Gamma(\kappa)/t^{\kappa}} ,
&\text{$x_{1}=0$},\\
 \frac{x_{1}+\kappa}{t},
&\text{$x_{1}\geq 1$},
\end{cases}
\end{align*}
thereby implying $\Pi[\eta_n,\kappa]$ satisfies Condition \ref{posterior condition}.
For $\lambda>0$ and $t\in (r,r+1)$,
let $ \hat{\rho}(\lambda,x_{1};t) :=  t\lambda \log \{\lambda / \hat{\theta}_{\Pi,1}(x_{1};t)\}
- t\lambda + t\hat{\theta}_{\Pi,1}(x_{1};t).$
Lemma \ref{lemma: Kullback--Leibler formula} gives
\begin{align}
\begin{split}
\rho(\lambda)
&\leq \underbrace{\int_{r}^{r+1}
\mathrm{E}_{t\lambda}1_{X_{1}=0}
\Big{\{}
\lambda\log\frac{\lambda}{\hat{\theta}_{\Pi,1}(X_{1};t)}
-\lambda + \hat{\theta}_{\Pi,1}(X_{1};t)
\Big{\}}
dt}_{=:A_{1}}
\\
&\quad
+
\underbrace{\int_{r}^{r+1}
\mathrm{E}_{t\lambda}1_{1\le X_{1}\le K}
\Big{\{}
\lambda\log\frac{\lambda}{\hat{\theta}_{\Pi,1}(X_{1};t)}
-\lambda + \hat{\theta}_{\Pi,1}(X_{1};t)
\Big{\}}
dt}_{=:A_{2}}
\\
&\quad + \underbrace{\int_{r}^{r+1} 
\mathrm{E}_{t\lambda}1_{K< X_{1}}
\Big{\{}
\lambda\log\frac{\lambda}{\hat{\theta}_{\Pi,1}(X_{1};t)}
-\lambda + \hat{\theta}_{\Pi,1}(X_{1};t)
\Big{\}}
dt}_{=:A_{3}}.
\end{split}
\label{bounding rho}
\end{align}
From (P1) in Condition \ref{posterior condition},
we have
\begin{align}
A_{1} \le \{\mathrm{e}^{-r\lambda}-\mathrm{e}^{-(r+1)}\}
\{\log \eta_{n}^{-1}+\log C_{1}^{-1}+\log \lambda\} + C_{2}\eta_{n}.
\label{bounding the first integral}
\end{align}
From (P2) in Condition \ref{posterior condition},
we get
\begin{align}
A_{2} \le \sum_{k=1}^{K}\frac{(r+1)^{k}\lambda^{k}}{k!}\mathrm{e}^{-r\lambda}
\{ \lambda\log \lambda + \lambda \log C_{3}^{-1} + C_{4} \}.
\label{bounding the second integral}
\end{align}
To bound $A_{3}$, for $r\in (r,r+1)$,
we write the integrand in $A_{3}$ as
\begin{align*}
&\mathrm{E}_{t\lambda}1_{K< X_{1}}
\Big{\{}
\lambda\log\frac{\lambda}{\hat{\theta}_{\Pi,1}(X_{1};t)}
-\lambda + \hat{\theta}_{\Pi,1}(X_{1};t)
\Big{\}}\\
&=
\underbrace{ 
\mathrm{E}_{t\lambda}1_{K< X_{1}}
\lambda\log \{\lambda / \hat{\theta}_{\Pi,1}(X_{1};t) \}
}_{=:A_{3,1}}
+
\underbrace{ 
\mathrm{E}_{t\lambda}1_{K< X_{1}}
[-\lambda + \hat{\theta}_{\Pi,1}(X_{1};t)]
}_{=:A_{3,2}}.
\end{align*}
Lemma \ref{lemma: exp of inverse}, together with Jensen's inequality, yields 
\begin{align*}
A_{3,1} &= 
\mathrm{E}_{t\lambda}1_{K< X_{1}} \left[ \lambda \log \frac{t\lambda}{X_{1}+1}\right]
+
\mathrm{E}_{t\lambda}1_{K< X_{1}} \left[ \lambda \log \frac{X_{1}+1}{X_{1}-C_{5}} \right]\\
&\le \lambda (1-\mathrm{e}^{-t\lambda}) 
+ \sum_{k=0}^{K}\frac{(r+1)^{k}\lambda^{k} }{k!}\mathrm{e}^{-r\lambda} \lambda |\log t\lambda|
+ \mathrm{E}_{t\lambda}1_{K< X_{1}} \left[ \lambda \log \frac{X_{1}+1}{X_{1}-C_{5}} \right]\\
&\le  \underbrace{\sum_{k=0}^{K}\frac{(r+1)^{k}\lambda^{k+1} }{k!}\mathrm{e}^{-r\lambda} 
\{\max\{|\log r\lambda|,| \log (r+1)\lambda|\} \}}_{=:F_{1}(\lambda,r)}
 +  \underbrace{\mathrm{E}_{t\lambda}1_{K< X_{1}} \left[ \lambda \log \frac{X_{1}+1}{X_{1}-C_{5}} \right]}_{=:A_{3,1,1}}.
\end{align*}
If $C_{5}\le -1$, then $A_{3,1,1}$ is bounded above by 0.
So, we can assume $C_{5}>-1$.
Observe that there exists some $c_{1}>1$ depending only on $K$ and $C_{5}$ such that we have $(X_{1}+1) / (X_{1}-C_{5}) < c_{1}$ for $X_{1}>K$.
This gives
\begin{align*}
A_{3,1,1} \le \lambda \log c_{1}.
\end{align*}
This bound is crude for large $\lambda$, and we consider another bound for large $\lambda$.
Take $\lambda^{\circ}$ in such a way that 
$(r\lambda^{\circ})-(r\lambda^{\circ})^{3/4}-C_{5}>0$ and $r\lambda^{\circ}>1$.
Then Lemma \ref{lemma: Bennett} yields, for $\lambda>\lambda^{\circ}$,
\begin{align*}
A_{3,1,1} \le \mathrm{e}^{-(r\lambda)^{1/2}/2}\lambda\log c_{1}
+\lambda \log \left\{1+\frac{1+C_{5}}{(r\lambda)-(r\lambda)^{3/4}-C_{5}}\right\}.
\end{align*}
Thus we obtain $A_{3,1,1} \le F_{2}(\lambda,r)$ with
\begin{align*}
F_{2}(\lambda,r) := 
\begin{cases} 
\lambda \log c_{1}, & \lambda \le \lambda^{\circ}, \\ 
\mathrm{e}^{-(r\lambda)^{1/2}/2}\lambda\log c_{1}
+\lambda \log \left\{1+\frac{1+C_{5}}{(r\lambda)-(r\lambda)^{3/4}-C_{5}}\right\}, 
& \lambda > \lambda^{\circ},
\end{cases}
\end{align*}
which, together with simple bounds on $A_{3,1,1}$ and $A_{3,2}$, gives
\begin{align}
A_{3} \le F_{1}(\lambda,r) +  F_{2}(\lambda,r)
+ \sum_{k>K}\frac{(t\lambda)^{k}\mathrm{e}^{-t\lambda}(k+C_{6})}{r( k!)}.
\label{bounding the third integral}
\end{align}

Taking the limit as $\lambda\to 0$ in the right hand sides of (\ref{bounding the first integral}), (\ref{bounding the second integral}),
and (\ref{bounding the third integral}) gives $\rho(0) = O(\eta_{n})$.
Maximizing upper bounds in (\ref{bounding the first integral}), (\ref{bounding the second integral}),
and (\ref{bounding the third integral})
with respect to $\lambda$ yields
$\sup_{\lambda>0} \rho(\lambda) \le (\mathcal{C}+ o(1))\log \eta_{n}^{-1}$.
Thus we obtain the desired upper bound
\[\mathcal{R}(\Theta[s_{n}]) \leq (n-s_{n})O(\eta_{n}) + (\mathcal{C}+o(1)) s_{n} \log \eta_{n}^{-1},\]
which completes the proof.
\qed


\subsection{Proof of Theorem \ref{theorem: adaptive}}
\label{subsection: proof of theorem adaptive}

Let $\bar{p} := \sum_{j=1}^{s_{n}} (1-\mathrm{e}^{-r\theta_{[j]}})/s_{n}$,
where $\theta_{[j]}$ ($j=1,\ldots,s_{n}$) denotes the $j$-th largest component of $\{\theta_{i}:i=1,\ldots,n\}$.

\subsubsection{Supporting lemma: properties of $\hat{s}_{n}$}

We start with summarizing the behaviour of $\hat{s}_{n}$ which is an important ingredient of the proof.
The proof is given in Section \ref{section: proofs for auxiliary lemmas}.
\begin{lemma}
\label{lemma: properties of hats_n}
The following hold for $\theta\in\Theta[s_{n}]$:
\begin{itemize}
\item[(a)] $\hat{s}_{n}\ge 1$;
\item[(b)] $\max\{ \Ep_{\theta}|\hat{s}_{n}/ s_{n} - 1 |, \Ep_{\theta} | \hat{s}_{n} /s_{n} -1 |^{2} \} \le 3$;
\item[(c)] If $s_{n} \ge 4$ and $\theta_{[1]} \ge 1/\sqrt{\log s_{n}}$,
then, for sufficiently large $n$ depending only on $r$,
\[\Ep_{\theta}  \log (s_{n} / \hat{s}_{n}) \le c_{1}\max\{\sqrt{\log s_{n}}, s_{n}\exp(-c_{2}s_{n}/\log s_{n} )\}\]
with positive constants $c_{1}$ and $c_{2}$ depending only on $r$.
\end{itemize}
\end{lemma}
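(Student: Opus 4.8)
The plan is to dispose of (a) and (b) by elementary bookkeeping and to concentrate the real effort on (c).

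\textbf{Parts (a) and (b).} Item (a) is immediate, since $\hat s_n=\max\{1,\cdot\}\ge 1$. For (b), write $B_i:=1_{X_i\ge 1}$; these are independent $\mathrm{Bernoulli}(p_i)$ with $p_i=1-\mathrm{e}^{-r\theta_i}$, and $p_i=0$ whenever $\theta_i=0$, so $Z:=\sum_{i=1}^{n}B_i$ is a sum of at most $\|\theta\|_0\le s_n$ Bernoulli variables. Hence $0\le Z\le s_n$, $\Ep_\theta Z=\sum_i p_i=s_n\bar p$ and $\Var_\theta Z\le\sum_i p_i\le s_n$. Since $s_n\ge 1$, one has $(\max\{1,Z\}-s_n)^2\le(Z-s_n)^2$, so $\Ep_\theta(\hat s_n/s_n-1)^2\le\Ep_\theta(Z/s_n-1)^2=\Var_\theta(Z)/s_n^2+(\bar p-1)^2\le 1/s_n+1\le 2$; the first-moment bound follows by Cauchy--Schwarz, and both are at most $3$.

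\textbf{Part (c): reduction by concentration.} Keep the notation above and put $\mu:=\Ep_\theta Z=s_n\bar p$. The guiding idea is that $\hat s_n$ concentrates around $\mu$, so that $\log(s_n/\hat s_n)$ is essentially $\log(1/\bar p)$. Split on the event $G:=\{Z\ge\mu/2\}$: on $G$ one has $\hat s_n\ge\max\{1,\mu/2\}$, hence $\log(s_n/\hat s_n)\le\log(2s_n/\mu)=\log(2/\bar p)$; on $G^{c}$ use the crude bound $\log(s_n/\hat s_n)\le\log s_n$. A multiplicative Chernoff bound for a sum of independent Bernoulli variables gives $\Pr_\theta(G^{c})=\Pr_\theta(Z<\mu/2)\le\mathrm{e}^{-\mu/8}$. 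Therefore
\[
\Ep_\theta\log(s_n/\hat s_n)\le\log(2/\bar p)+(\log s_n)\,\mathrm{e}^{-\mu/8}.
\]

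\textbf{Part (c): using the signal condition, and the main obstacle.} It remains to bound the right-hand side. Since $1-\mathrm{e}^{-x}\ge\min\{x/2,\,1-\mathrm{e}^{-1}\}$ for $x\ge 0$, the hypothesis $\theta_{[1]}\ge 1/\sqrt{\log s_n}$ forces the detection probability of the leading coordinate to satisfy $p_{[1]}\ge c_r/\sqrt{\log s_n}$ with $c_r>0$ depending only on $r$, whence $\mu\ge p_{[1]}\ge c_r/\sqrt{\log s_n}$. When the signal is comparatively strong, say $\bar p\ge 1/\log s_n$ (equivalently $\mu\ge s_n/\log s_n$), the display gives at once $\log(2/\bar p)=O(\log\log s_n)=O(\sqrt{\log s_n})$ and, using $\sqrt{\log s_n}\le\log s_n$, $(\log s_n)\mathrm{e}^{-\mu/8}\le s_n\,\mathrm{e}^{-c_2 s_n/\log s_n}$, i.e.\ the asserted bound. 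The delicate case is the complementary regime of small $\bar p$, where the concentration argument no longer controls $\hat s_n$: there one must argue from $\theta_{[1]}\ge 1/\sqrt{\log s_n}$ directly --- that at least one coordinate is still detected with probability of order $1/\sqrt{\log s_n}$, so that $\hat s_n$ cannot be so small as to make $\Ep_\theta\log(s_n/\hat s_n)$ exceed $c_1\sqrt{\log s_n}$. Balancing the weak-signal term $\log(2/\bar p)$ against the small-$\mu$ term $(\log s_n)\mathrm{e}^{-\mu/8}$ is precisely what produces the maximum of the two quantities in the statement, and carrying this trade-off through uniformly in $\theta$ (while keeping track of the threshold on $n$ coming from $r$) is the main obstacle.
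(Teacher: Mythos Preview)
Your treatment of (a) and (b) is fine and matches the paper's.

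For (c) there is a genuine gap. Your two--term bound
\[
\Ep_\theta\log(s_n/\hat s_n)\le \log(2/\bar p)+(\log s_n)\,\mathrm e^{-\mu/8}
\]
is correct, but the only consequence you extract from the hypothesis $\theta_{[1]}\ge 1/\sqrt{\log s_n}$ is the very weak $\mu\ge p_{[1]}\ge c_r/\sqrt{\log s_n}$, i.e.\ $\bar p\ge c_r/(s_n\sqrt{\log s_n})$. With that alone, in your ``small $\bar p$'' regime the first term can be as large as $\log(2/\bar p)\asymp\log s_n$, while the second is close to $\log s_n$ as well (since $\mathrm e^{-\mu/8}\approx 1$ when $\mu\asymp 1/\sqrt{\log s_n}$); both blow past the target $c_1\sqrt{\log s_n}$. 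Your suggested rescue---``one coordinate is detected with probability $\sim 1/\sqrt{\log s_n}$''---does not help: detecting one coordinate only yields $\hat s_n\ge 1$, which is already built into the definition and still leaves $\log(s_n/\hat s_n)$ as large as $\log s_n$. What is actually needed is the stronger claim $\bar p\ge \tilde c_1/\sqrt{\log s_n}$; once that is in hand your two--term bound finishes the job immediately (then $\log(2/\bar p)=O(\log\log s_n)$ and $\mu\ge \tilde c_1 s_n/\sqrt{\log s_n}$ makes the Chernoff term negligible), and the ``small $\bar p$'' case never occurs.

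For comparison, the paper takes a different route to the same endpoint: it writes $\Ep_\theta\log(s_n/\hat s_n)=\int_{1/s_n}^{1}\Pr(\hat s_n<\beta s_n)\,d\beta/\beta$ by the layer--cake formula, bounds each tail by Hoeffding to get the integrand $\exp\{f(\beta)\}$ with $f(\beta)=-2s_n(\beta-\bar p)^2-\log\beta$, and then locates the maximizer of $f$ on $[1/s_n,1]$ to bound the integral by $\exp\{\max(f(1/s_n),f(\beta^\ast))\}$. The two resulting pieces, $\exp f(1/s_n)\le s_n\,\mathrm e^{-\tilde c_2 s_n/\log s_n}$ and $\exp f(\beta^\ast)\lesssim\sqrt{\log s_n}$, are exactly the two members of the maximum in the statement. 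Both evaluations, however, rest on the same input $\bar p\ge \tilde c_1/\sqrt{\log s_n}$. So your Chernoff split is a legitimate and shorter alternative to the paper's layer--cake/Hoeffding calculation, but you must first secure that lower bound on $\bar p$ rather than on $\mu$; the trade--off you describe between $\log(2/\bar p)$ and $(\log s_n)\mathrm e^{-\mu/8}$ is not what produces the maximum in the lemma.
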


This lemma indicates that $\hat{s}_{n}$ is not so far from $s_{n}$.
In the sparse region (i.e.,~$s_{n}=o(n^{1/2})$),
properties (a) and (b) are sufficient to prove Theorem \ref{theorem: adaptive}.
In the dense region (i.e.,~$s_{n}>c n^{1/2}$ for any $c>0$),
property (c) is additionally required.

\subsubsection{Proving Theorem \ref{theorem: adaptive}}
Decompose the difference between the Kullback--Leibler divergences in such a way that
\begin{align}
R(\theta,q_{\Pi[\hat{\eta}_{n},\kappa]})
-R(\theta,q_{\Pi[\eta_{n},\kappa]})
&=
\sum_{i=1}^{n}
\underbrace{\Ep_{\theta}\log\left\{\frac{
q_{\Pi[\eta_{n},\kappa],i}(Y_{i}\mid X_{i})}{
q_{\Pi[\hat{\eta}_{n},\kappa],i}(Y_{i}\mid X_{i})}\right\}}_{:=D_{i}}
\nonumber\\
&=
\sum_{i\in\mathcal{A}} D_{i}
+
\sum_{i\not\in\mathcal{A}} D_{i},
\label{Decomposition of KL with plug in}
\end{align}
where
for $\theta\in\Theta[s_{n}]$,
let $\mathcal{A}:=\mathcal{A}(\theta)=\{i: \theta_{i}\neq 0\}$.
The following three steps give an upper bound on the rightmost side of (\ref{Decomposition of KL with plug in}).
In all steps, we use the following expression of $q_{\Pi[h,\kappa]}$:
\begin{align}
&q_{\Pi[h,\kappa]}(y\mid x) \nonumber\\
&= \prod_{i=1}^{n}
\left\{ \omega_{i} \delta_{0}(y_{i}) 
+ (1-\omega_{i}) \begin{pmatrix} x_{i}+y_{i} +\kappa -1 \\ y_{i} \end{pmatrix} \left(\frac{r}{r+1}\right)^{x_{i}+\kappa}\left(1-\frac{r}{r+1}\right)^{y_{i}} \right\},
\label{eq: explicit form}
\end{align}
where
\begin{align*}
\omega_{i}:=
\begin{cases}
1\big{/}\big{\{}1+h \Gamma(\kappa)/r^{\kappa}\big{\}} & \text{ if } x_{i}=0, \\
0 & \text{ if }x_{i}\geq 1.
\end{cases}
\end{align*}

\textit{Step 1: Bounding $D_{i}$ for $i\not\in\mathcal{A}$}.
From (\ref{eq: explicit form}), we have
\begin{align}
D_{i} &=\Ep_{\theta}\log \left\{ \frac{1+\hat{\eta}_{n}\Gamma(\kappa)/r^{\kappa} }{1+\eta_{n}\Gamma(\kappa)/r^{\kappa}}\right\}
+\Ep_{\theta}\log \left[ \frac{1+\{\eta_{n}\Gamma(\kappa)/r^{\kappa}\}\{r/(r+1)\}^{\kappa} }
{1+\{\hat{\eta}_{n}\Gamma(\kappa)/r^{\kappa}\}\{r/(r+1)\}^{\kappa}}\right].
\label{Case i not in A decomposition}
\end{align}
Since $\log(1+x) \le x$ for $x>0$, we have
\begin{align}
\Ep_{\theta}\log \left\{ \frac{1+\hat{\eta}_{n}\Gamma(\kappa)/r^{\kappa} }{1+\eta_{n}\Gamma(\kappa)/r^{\kappa}}\right\}
&\le
\Ep_{\theta}\log \left\{ 1 + \frac{(\hat{\eta}_{n}-\eta_{n})\Gamma(\kappa)/r^{\kappa}}{1+\eta_{n}\Gamma(\kappa)/r^{\kappa}} \right\}
\nonumber\\
&\le
\Ep_{\theta}\log \left\{ 1 + \eta_{n} |\hat{s}_{n}/s_{n} -1 | \frac{\Gamma(\kappa)}{r^{\kappa}} \right\} 
\nonumber\\
&\le
\eta_{n}\frac{\Gamma(\kappa)}{r^{\kappa}}\Ep_{\theta}|\hat{s}_{n}/s_{n}-1|
\nonumber\\
&\le c_{1} \eta_{n} \text{ with }c_{1}:=3\frac{\Gamma(\kappa)}{r^{\kappa}}
\label{adaptive bound 1}
\end{align}
where the last inequality follows from Lemma \ref{lemma: properties of hats_n} (b).
Observe that
\[
\frac{\{\eta_{n}-\hat{\eta}_{n}\}\{\Gamma(\kappa)/(r+1)^{\kappa}\}}{1+\eta_{n}\{\Gamma(\kappa)/(r+1)^{\kappa}\}}
\leq \frac{\eta_{n}\Gamma(\kappa)/(r+1)^{\kappa}}
{1+\eta_{n}\Gamma(\kappa)/(r+1)^{\kappa}}
\leq \frac{\Gamma(\kappa)}{1+\Gamma(\kappa)}.
\]
Then, together with the inequality
\[
-\log(1-x) \leq \{1/(1-U)^{2}\}x^{2}+x \text{ for $0<x \leq U$ with some $0<U<1$},
\]
this observation gives
\begin{align}
\Ep_{\theta}&\log \left[ \frac{1+\{\eta_{n}\Gamma(\kappa)/r^{\kappa}\}\{r/(r+1)\}^{\kappa} }
{1+\{\hat{\eta}_{n}\Gamma(\kappa)/r^{\kappa}\}\{r/(r+1)\}^{\kappa}}\right]
\nonumber\\
&\le -\Ep_{\theta}
\log \left[1 - \frac{\{\eta_{n}-\hat{\eta}_{n}\}\{\Gamma(\kappa)/(r+1)^{\kappa}\}}{1+\eta_{n}\{\Gamma(\kappa)/(r+1)^{\kappa}\}}\right]\nonumber\\
&\le \eta_{n}\frac{\Gamma(\kappa)}{(r+1)^{\kappa}}\Ep_{\theta}|\hat{\eta}_{n}/\eta_{n}-1| 
+ \left[\{1+\Gamma(\kappa)\}\eta_{n}\frac{\Gamma(\kappa)}{(r+1)^{\kappa}}\right]^{2}\Ep_{\theta} | \hat{\eta}_{n} / \eta_{n} -1 |^{2}
\nonumber\\
&\le c_{2}\eta_{n} \text{ with } 
c_{2}:=3\frac{\Gamma(\kappa)}{(r+1)^{\kappa}}+3\left[\{1+\Gamma(\kappa)\}\frac{\Gamma(\kappa)}{(r+1)^{\kappa}}\right]^{2},
\label{adaptive bound 2}
\end{align}
where the last inequality follows from Lemma \ref{lemma: properties of hats_n} (b).
Combining (\ref{adaptive bound 1}) and (\ref{adaptive bound 2}) with (\ref{Case i not in A decomposition})
yields
\begin{align}
D_{i} \le c_{3} \eta_{n} \text{ with }c_{3}:=c_{1}+c_{2} \text{ for $i\not\in\mathcal{A}$}.
\label{Case i not in A}
\end{align}

\textit{Step 2: Bounding $D_{i}$ for $i\in \mathcal{A}$}.
Consider the following four cases:
(i) $X_{i}=0$, $Y_{i}=0$; 
(ii) $X_{i}\geq 1$, $Y_{i}=0$;
(iii) $X_{i}=0$, $Y_{i}\geq 1$;
(iv) $X_{i}\geq 1$, $Y_{i}\geq 1$.
In Case (i),
we have
\begin{align}
\log&\left\{
\frac{q_{\Pi[\eta_{n},\kappa],i}(Y_{i}\mid X_{i})}
{q_{\Pi[\hat{\eta}_{n},\kappa],i}(Y_{i}\mid X_{i})}\right\}\nonumber\\
&=\log \left\{\frac{1+\hat{\eta}_{n}\Gamma(\kappa)/r^{\kappa}}{1+\eta_{n}\Gamma(\kappa)/r^{\kappa}}
\right\}
+ 
\log \left[ \frac{1+\{\eta_{n}\Gamma(\kappa)/r^{\kappa}\}\{r/(r+1)\}^{\kappa} }
{1+\{\hat{\eta}_{n}\Gamma(\kappa)/r^{\kappa}\}\{r/(r+1)\}^{\kappa}}\right]\nonumber\\
&\le |\hat{s}_{n}/s_{n}-1| + \log [1+\Gamma(\kappa)/(r+1)^{\kappa}],
\label{Case i}
\end{align}
where we use the inequalities
\begin{align*}
\log \left\{\frac{1+\hat{\eta}_{n}\Gamma(\kappa)/r^{\kappa}}{1+\eta_{n}\Gamma(\kappa)/r^{\kappa}}
\right\}
\le
\log \left\{1+\frac{|\hat{\eta}_{n}-\eta_{n}|\Gamma(\kappa)/r^{\kappa}}{1+\eta_{n}\Gamma(\kappa)/r^{\kappa}}
\right\}
\le |\hat{s}_{n}/s_{n}-1|
\end{align*}
and
\begin{align*}
\log \left[ \frac{1+\{\eta_{n}\Gamma(\kappa)/r^{\kappa}\}\{r/(r+1)\}^{\kappa} }
{1+\{\hat{\eta}_{n}\Gamma(\kappa)/r^{\kappa}\}\{r/(r+1)\}^{\kappa}}\right]
&\le
\log \left[1+ \{\eta_{n}\Gamma(\kappa)/r^{\kappa}\}\{r/(r+1)\}^{\kappa} \right]
\nonumber\\
&\le \log \left[1+ \Gamma(\kappa)/(r+1)^{\kappa} \right].
\end{align*}
Similarly, 
we get the following evaluations:
In Case (ii), 
\begin{align}
\log\left\{\frac{
q_{\Pi[\eta_{n},\kappa],i}(Y_{i}\mid X_{i})}
{q_{\Pi[\hat{\eta}_{n},\kappa],i}(Y_{i}\mid X_{i})}
\right\}
&=0;
\label{Case ii}
\end{align}
In Case (iii), 
\begin{align}
\log\left\{
\frac{q_{\Pi[\eta_{n},\kappa]}(Y_{i}\mid X_{i})} 
{q_{\Pi[\hat{\eta}_{n},\kappa]}(Y_{i}\mid X_{i})}
\right\}
&\leq \log (s_{n}/\hat{s}_{n}) + |\hat{s}_{n}/s_{n}-1|;
\label{Case iii}
\end{align}
In Case (iv), 
\begin{align}
\log\left\{
\frac{
q_{\Pi[\eta_{n},\kappa]}(Y_{i}\mid X_{i})}
{q_{\Pi[\hat{\eta}_{n},\kappa]}(Y_{i}\mid X_{i})}
\right\}
&=0.
\label{Case iv}
\end{align}
From (\ref{Case i})-(\ref{Case iv}),
we have, for $i\in \mathcal{A}$,
\begin{align}
    D_{i} &\le 2\Ep_{\theta}|\hat{s}_{n}/s_{n}-1|
    +\log[1+\Gamma(\kappa)/(r+1)^{\kappa}]
    +\Ep_{\theta} [1_{X_{i}=0,Y_{i}\ge 1}\log(s_{n}/\hat{s}_{n})]\nonumber\\
    &\leq 6 +\log[1+\Gamma(\kappa)/(r+1)^{\kappa}]
    +\Ep_{\theta} [1_{X_{i}=0,Y_{i}\ge 1}\log(s_{n}/\hat{s}_{n})],
    \label{Case i in A}
\end{align}
where the last inequality follows from Lemma \ref{lemma: properties of hats_n} (b).

\textit{Step 3}.
Combining (\ref{Case i not in A}) and (\ref{Case i in A}) with (\ref{Decomposition of KL with plug in}) gives
\begin{align}
R(\theta, q_{\Pi[\hat{\eta}_{n},\kappa]})
&\le 
R(\theta, q_{\Pi[\eta_{n},\kappa]}) + c_{3}(n-s_{n})\eta_{n} + s_{n}\{6 +\log[1+\Gamma(\kappa)/(r+1)^{\kappa}]\} 
\nonumber\\
&\quad + \sum_{i\in \mathcal{A}}\Ep_{\theta} [1_{X_{i}=0,Y_{i}\ge 1}\log(s_{n}/\hat{s}_{n})] \nonumber\\
&\le  R(\theta,q_{\Pi[\eta_{n},\kappa]}) + c_{4}s_{n} + 
\sum_{i\in \mathcal{A}} \underbrace{\Ep_{\theta} [1_{X_{i}=0,Y_{i}\ge 1}\log(s_{n}/\hat{s}_{n})]}_{:=T_{i}},
\end{align}
where $c_{4}:=c_{3}+ 6+ \log[1+\Gamma(\kappa)/(r+1)^{\kappa}]$.
We will show 
\begin{align}
\sum_{i\in\mathcal{A}}T_{i}=o(s_{n}\log (n/ s_{n})).
\label{eq: bounding T}
\end{align}
Since the number of indices in $\mathcal{A}$ is bounded above from $s_{n}$,
it suffices to show $T_{i}=o(\log (n/s_{n}))$ uniformly in $\theta\in\Theta[s_{n}]$ and $i\in\mathcal{A}$.
First consider the case with $s_{n}=o(n^{1/2})$.
Since $\log (s_{n}/\hat{s}_{n}) \le \log s_{n}$ from Lemma 
\ref{lemma: properties of hats_n} (a),
we get
\begin{align}
T_{i} \le \log s_{n} = o( \log (n/s_{n}) ).
\label{eq: eval T1 1}
\end{align}
Next consider the case with $s_{n}>cn^{1/2}$ for any $c>0$.
Since $\log (s_{n}/\hat{s}_{n}) \le \log s_{n}$ from Lemma 
\ref{lemma: properties of hats_n} (a),
we get,
for $\theta\in\Theta[s_{n}]$ such that $\theta_{[1]} \le 1/\sqrt{\log s_{n}}$,
\begin{align}
T_{i}\le \Ep_{\theta} [1_{Y_{i}\ge 1}\log(s_{n}/\hat{s}_{n})] 
\le \left(1-\mathrm{e}^{-\theta_{[1]}}\right)\log s_{n}
\le \theta_{[1]}\log s_{n}
\le \sqrt{\log s_{n}}.
\label{eq: eval T1 2}
\end{align}
Using Lemma \ref{lemma: properties of hats_n} (c),
we have,
for $\theta\in\Theta[s_{n}]$ such that $\theta_{[1]} \ge 1/\sqrt{\log s_{n}}$,
\begin{align}
T_{i}\le \Ep_{\theta} [\log(s_{n}/\hat{s}_{n})] \le
c_{5}\sqrt{\log s_{n}},
\label{eq: eval T1 3}
\end{align}
where $c_{5}$ is the constant depending only on $r$
appearing in Lemma \ref{lemma: properties of hats_n} (c).
From (\ref{eq: eval T1 1})-(\ref{eq: eval T1 3}), we obtain (\ref{eq: bounding T})
and thus complete the proof.
\qed

\section{Proofs of auxiliary lemmas}
\label{section: proofs for auxiliary lemmas}
This section provides proofs of Lemmas \ref{lemma: Kullback--Leibler formula} and \ref{lemma: properties of hats_n}.

\begin{proof}[Proof for Lemma \ref{lemma: Kullback--Leibler formula}]
Let $\Pi$ be a prior of $\theta$
and
suppose that the Bayes estimate $\hat{\theta}_{\Pi}(x;t)$ based on $\Pi$
is strictly larger than $0$ for any $x\in\mathbb{N}^{n}$ and any $t\in(r,r+1)$.

Observe that the Kullback--Leibler risk is decomposed as 
\begin{align*}
R(\theta, q_{\Pi}) = \Ep_{\theta}\left[\log \left\{\frac{s(Y,X\mid \theta)}{ s_{\Pi}(Y,X)}\right\} \right] - \Ep_{\theta}\left[\log \left\{\frac{p(X\mid \theta)}{p_{\Pi}(X)}\right\}\right],
\end{align*}
where $s(y,x\mid \theta)=p(x\mid\theta)q(y\mid\theta)$, $s_{\Pi}(y,x):=\int s(y,x\mid \theta) d\Pi(\theta)$,
and
$p_{\Pi}(x):= \int p(x\mid\theta) d\Pi(\theta)$.
For $z\in\mathbb{N}^{n}$ and $t\in(r,r+1)$,
let $p(z \mid \theta; t):=\prod_{i=1}^{n}\mathrm{e}^{-t\theta_{i}}(t\theta_{i})^{z_{i}-1}/z_{i}!$
and
let $p_{\Pi}(z;t):= \int p(z\mid\theta ;t)\Pi(d\theta).$
From the sufficiency reduction, we have
\begin{align*}
\Ep_{\theta}\left[\log\left\{ \frac{s(Y,X\mid\theta)}{s_{\Pi}(Y,X)}\right\}\right]= \Ep_{\theta}\left[\log \left\{\frac{p( X+Y \mid \theta ; r+1) }{ p_{\Pi}(X+Y;r+1)}\right\}\right].
\end{align*}
Introducing the random variable $Z_{t}$ from $\otimes_{i=1}^{n}\mathrm{Po}(t\theta_{i})$ ($t\in(r,r+1)$),
we get
\begin{align*}
R(\theta,q_{\Pi})=\int_{r}^{r+1} \frac{d}{dt}\Ep\left[\log \left\{ \frac{p(Z_{t}\mid \theta ; t)}{ p_{\Pi}(Z_{t}; t)} \right\}\right] dt.
\end{align*}
Therefore, it suffices to show
\begin{align}
\frac{d}{dt} \Ep\left[\log \left\{ \frac{p(Z_{t}\mid \theta ; t)}{ p_{\Pi}(Z_{t}; t) }\right\}\right] = \frac{R_{\mathrm{e}}(t\theta,t\hat{\theta}_{\Pi}(\cdot;t))}{t},\ t\in (r,r+1).
\label{eq: derivative of KL risk}
\end{align}
Differentiating $\Ep[\log \{ p(Z_{t}\mid \theta ; t)/ p_{\Pi}(Z_{t}; t) \}]$ with respect to $t$ yields
\begin{align}
\Ep[\log \{ p(Z_{t}\mid \theta ; t)/ p_{\Pi}(Z_{t}; t) \}] 
&=
\Ep\left[\left\{\frac{d\log p(Z_{t}\mid \theta ; t)}{dt}\right\} \log\left\{\frac{p(Z_{t}\mid\theta;t)}{p_{\Pi}(Z_{t};t)}\right\}\right]
\nonumber
\\
&\quad+\Ep\left[\frac{d\log p(Z_{t}\mid \theta ; t)}{dt}\right]
-\Ep\left[\frac{d\log p_{\Pi}(Z_{t};t)}{dt}\right].
\label{eq: differentiating}
\end{align}
Let $e_{i}$ be the unit length vector in the $i$-th coordinate direction ($i=1,\ldots,n$).
Together with the simple fact that
\[p_{\Pi}(Z_{t}+e_{i};t)/p_{\Pi}(Z_{t};t)=\hat{\theta}_{\Pi,i}(Z_{t};t),\]
Hudson's lemma 
($\Ep[\sum_{i=1}^{n}(Z_{t,i}-1)f(Z_{t})]=\Ep[\sum_{i=1}^{n}t\theta_{i}f(Z_{t}+e_{i})]$ for any function $f:\mathrm{N}^{n}\to\R$)
yields
\begin{align}
\Ep&[\{d\log p(Z_{t}\mid \theta ; t)/dt\} \log\{p(Z_{t}\mid\theta;t)/p_{\Pi}(Z_{t};t)\}]
\nonumber\\
&= \Ep\left[\left\{\sum_{i=1}^{n}\frac{Z_{t,i}-1-t\theta_{i})}{t}\right\}\log\{p(Z_{t}\mid\theta;t)/p_{\Pi}(Z_{t};t)\} \right]
\nonumber\\
&= \Ep \sum_{i=1}\theta_{i}\big{[}\log\{ p(Z_{t}+e_{i}\mid \theta;t) / p(Z_{t}\mid \theta;t)\} - \log\{p_{\Pi}(Z_{t}+e_{i};t) / p_{\Pi}(Z_{t})\}\big{]}
\nonumber\\
&= \Ep \sum_{i=1}^{n}\theta_{i}\log\{ \theta_{i}/\hat{\theta}_{\Pi,i}(Z_{t};t) \}.
\label{identity 1}
\end{align}
Similarly,
the identity $(d/dt)\log p_{\Pi}(x;t) = -\sum_{i=1}^{n}\{\hat{\theta}_{\Pi,i}(x;t)-x_{i}+1\}$
gives
\begin{equation}
\begin{split}
\Ep\left[\frac{d}{dt}\{\log p(Z_{t}\mid \theta ; t)\}\right] &= \Ep\left[\sum_{i=1}^{n}\frac{Z_{t,i}-1-t\theta_{i}}{t}\right]
\text{ and }\\
\Ep\left[\frac{d}{dt}\{\log p_{\Pi}(Z_{t};t)\}\right] &= \Ep\left[-\sum_{i=1}^{n}\frac{\hat{\theta}_{\Pi,i}(Z_{t};t) -Z_{t,i}+1}{t} \right].
\end{split}
\label{identity 2}
\end{equation}
Combining identities (\ref{identity 1}) and (\ref{identity 2}) with (\ref{eq: differentiating}) gives (\ref{eq: derivative of KL risk}), which completes the proof.
\end{proof}

\vspace{5mm}

\begin{proof}[Proof of Lemma \ref{lemma: properties of hats_n}]
Property (a) is obvious by definition.
Consider the bias of $\hat{s}_{n}-s_{n}$.
Decompose $\#\{i: X_{i}\geq 1\}$ in such a way that
$\#\{i:X_{i}\geq 1\}=\sum_{j=1}^{s_{n}}Z_{j}$
with independent Bernoulli random variable $Z_{j}$ ($j=1,\ldots,s_{n}$) having the success probability $1-\exp(-r\theta_{[j]})$.
This decomposition gives
\begin{align}
-\sum_{j=1}^{s_{n}}\mathrm{e}^{-r\theta_{[j]}}
\le
\Ep_{\theta}(\hat{s}_{n}-s_{n})
\le 0.
\label{proof bias}
\end{align}
Consider the variance of $\hat{s}_{n}-s_{n}$.
Since
\begin{align*}
    -1+\sum_{j=1}^{s_{n}}(Z_{j}-\Ep Z_{j})
    \leq
    (\hat{s}_{n}-\Ep_{\theta}\hat{s}_{n})
    \leq
    1+\sum_{j=1}^{s_{n}}(Z_{j}-\Ep Z_{j}),
\end{align*}
we have
\begin{align}
\Ep_{\theta}\left|\frac{\hat{s}_{n}-\Ep_{\theta}\hat{s}_{n}}{s_{n}}\right|^{2}
\leq \frac{1}{s^{2}_{n}} + \sum_{j=1}^{s_{n}}\frac{\mathrm{e}^{-r\theta_{[j]}}\left(1-\mathrm{e}^{-r\theta_{[j]}}\right)}{s_{n}^{2}}.
\label{proof variance}
\end{align}
Given that $s_{n} \ge 1$,
we get $\max\{\Ep|\hat{s}_{n}/s_{n}-1|,\Ep|\hat{s}_{n}/s_{n}-1|^{2}\} \le 3$
from (\ref{proof bias}) and (\ref{proof variance}),
which shows (b).

By the layer-cake representation and since $\sum_{j=1}^{s_{n}}Z_{j} \le \hat{s}_{n}$, we have
\begin{align*}
\Ep_{\theta} \log \frac{\eta_{n}}{\hat{\eta}_{n}}
&= \int_{0}^{\log s_{n}} \Pr \left( \log \frac{\eta_{n}}{\hat{\eta}_{n}} > x\right)dx\nonumber\\
&=  \int_{1/s_{n}}^{1} \Pr \left( \hat{s}_{n} < \beta s_{n} \right) \frac{d\beta}{\beta}
\le  \int_{1/s_{n}}^{1} \Pr \left( \sum_{j=1}^{s_{n}}Z_{j} < \beta s_{n}  \right)\frac{d\beta}{\beta}.
\end{align*}
Together with the Hoeffding inequality, this yields
\begin{align}
\Ep_{\theta} \log \frac{\eta_{n}}{\hat{\eta}_{n}}
&\le \int_{1/s_{n}}^{1} \Pr \left( \sum_{j=1}^{s_{n}}Z_{j} - \Ep\left[\sum_{j=1}^{s_{n}}Z_{j}\right] < (\beta - \bar{p})s_{n}  \right)
\frac{d\beta}{\beta}
\nonumber\\
&\le \int_{1/s_{n}}^{1} \frac{1}{\beta}\exp\{ -2 s_{n} (\beta - \bar{p})^{2} \} d\beta
= \int_{1/s_{n}}^{1} \exp( f(\beta) ) d\beta,
\label{eq: expression using f}
\end{align}
where $f(\beta):= -2s_{n} (\beta -\bar{p})^{2} - \log \beta$.

We employ the following bound on $f(\beta)$ to obtain an upper bound on the right hand side in (\ref{eq: expression using f}).
\begin{lemma}\label{lemma: f}
If $\bar{p}^{2}> 1/s_{n}$ and $s_{n}\ge 4$,
then we have 
\[
f(\beta) \le \max\left[ f(1/s_{n}), f\left\{ (1/2)\left(\bar{p} + \sqrt{\bar{p}^{2} - 1/s_{n}}\right) \right\}\right]
\text{ for $1/s_{n} \le \beta \le 1$}.
\]
\end{lemma}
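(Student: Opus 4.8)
The plan is to study $f(\beta)=-2s_{n}(\beta-\bar{p})^{2}-\log\beta$ on $(0,\infty)$ through its derivative and to locate its critical points explicitly. Differentiating gives $f'(\beta)=-4s_{n}(\beta-\bar{p})-1/\beta=-\bigl(4s_{n}\beta^{2}-4s_{n}\bar{p}\,\beta+1\bigr)/\beta$. The quadratic $4s_{n}\beta^{2}-4s_{n}\bar{p}\,\beta+1$ has discriminant $16s_{n}(s_{n}\bar{p}^{2}-1)$, which is positive exactly under the hypothesis $\bar{p}^{2}>1/s_{n}$, and its roots are $\beta_{\pm}=\tfrac12\bigl(\bar{p}\pm\sqrt{\bar{p}^{2}-1/s_{n}}\bigr)$, so that $f'(\beta)=-4s_{n}(\beta-\beta_{-})(\beta-\beta_{+})/\beta$ with $0<\beta_{-}<\beta_{+}$. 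Hence $f'>0$ on $(\beta_{-},\beta_{+})$ and $f'<0$ on $(0,\beta_{-})\cup(\beta_{+},\infty)$: $f$ decreases on $(0,\beta_{-}]$, increases on $[\beta_{-},\beta_{+}]$, and decreases on $[\beta_{+},\infty)$, with a local minimum at $\beta_{-}$ and a local maximum at $\beta_{+}$. Note that $\beta_{+}$ is precisely the point appearing in the statement.

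Next I would record that $\beta_{-}+\beta_{+}=\bar{p}$ and $\beta_{-}\beta_{+}=1/(4s_{n})$. Since $\bar{p}$ is an average of the numbers $1-\mathrm{e}^{-r\theta_{[j]}}\in[0,1)$, we have $0\leq\bar{p}<1$, whence $\beta_{+}<\bar{p}<1$. From $\bar{p}^{2}>1/s_{n}$ we get $\beta_{+}\geq\tfrac12\bar{p}>\tfrac{1}{2\sqrt{s_{n}}}$, and $\tfrac{1}{2\sqrt{s_{n}}}\geq\tfrac{1}{s_{n}}$ precisely when $s_{n}\geq4$; therefore $\beta_{+}\in(1/s_{n},1)$. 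In particular $\beta_{+}$ is an interior point of $[1/s_{n},1]$ at which $f$ has a local maximum, and $f$ is strictly decreasing on $[\beta_{+},1]$, so $f(\beta)\leq f(\beta_{+})$ for $\beta\in[\beta_{+},1]$. It then remains to bound $f$ on $[1/s_{n},\beta_{+}]$, which I would handle by a two-case split on the position of the local minimum $\beta_{-}$: if $\beta_{-}\leq1/s_{n}$ then $f$ is increasing on all of $[1/s_{n},\beta_{+}]$, so $f\leq f(\beta_{+})$ there; if $\beta_{-}>1/s_{n}$ then $\beta_{-}\in(1/s_{n},\beta_{+})$, $f$ decreases on $[1/s_{n},\beta_{-}]$ and increases on $[\beta_{-},\beta_{+}]$, so its maximum over $[1/s_{n},\beta_{+}]$ is attained at an endpoint, giving $f\leq\max\{f(1/s_{n}),f(\beta_{+})\}$. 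Combining the two pieces yields the claimed inequality on all of $[1/s_{n},1]$.

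There is no genuine difficulty here beyond this bookkeeping; the only place where the hypotheses are used is in checking that the right critical point $\beta_{+}$ lies strictly inside $[1/s_{n},1]$ — reality of $\beta_{\pm}$ and the lower bound $\beta_{+}>1/(2\sqrt{s_{n}})$ come from $\bar{p}^{2}>1/s_{n}$, the comparison $1/(2\sqrt{s_{n}})\geq1/s_{n}$ comes from $s_{n}\geq4$, and $\beta_{+}<1$ comes from $\bar{p}<1$.
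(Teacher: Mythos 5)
Your proof is correct and follows essentially the same route as the paper: compute $f'(\beta)=-\bigl(4s_n\beta^2-4s_n\bar{p}\beta+1\bigr)/\beta$, locate the critical points $\beta_\pm=\tfrac12\bigl(\bar{p}\pm\sqrt{\bar{p}^2-1/s_n}\bigr)$, observe the decreasing/increasing/decreasing profile with local maximum at $\beta_+$, and verify that $\beta_+\in(1/s_n,1)$. One point worth flagging: your explicit case split on whether $\beta_-\le1/s_n$ or $\beta_->1/s_n$ is actually needed. The paper's writeup instead justifies the monotonicity claim by asserting that $\tfrac12\bigl(\bar{p}-\sqrt{\bar{p}^2-1/s_n}\bigr)<1/s_n$, but that auxiliary claim is not true in general under the stated hypotheses (for instance, $\bar{p}=0.2$, $s_n=100$ gives $\beta_-\approx0.0134>0.01=1/s_n$, even though $\bar{p}^2>1/s_n$ and $s_n\ge4$). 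Since the lemma's right-hand side already allows the maximum to occur at $1/s_n$, this does not affect the correctness of the lemma itself, and your two-case argument covers exactly the gap that the paper glosses over. In short: same method, but your version is airtight.
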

\begin{proof}[Proof of Lemma \ref{lemma: f}]
Observe that 
we have
\begin{align*}
f''(\beta) > 0   & \text{ for } 1/s_{n} \le \beta < 1/\sqrt{2s_{n}},\\
f''(\beta) \le 0 & \text{ for } 1/\sqrt{2s_{n}} \le \beta \le 1,
\end{align*}
and
\begin{align*}
f'(\beta) \ge 0 & \text{ for } \max\{1/s_{n},  (1/2)(\bar{p} - \sqrt{\bar{p}^{2}-1/s_{n}}) \} \le \beta < (1/2)(\bar{p} + \sqrt{\bar{p}^{2}-1/s_{n}}),\\
f'(\beta) \le 0 & \text{ for } (1/2)(\bar{p} + \sqrt{\bar{p}^{2}-1/s_{n}}) \le \beta \le 1,
\end{align*}
where the first two inequalities follow from $f''(\beta)  =  -4s_{n} + 1/\beta^{2}$,
and the last two inequalities follow from $f'(\beta)= -4s_{n}(\beta-\bar{p}) - 1/\beta$
and  $(1/2)( \bar{p} - \sqrt{\bar{p}^{2} - 1/s_{n}}) < 1/s_{n}$.
This observation gives the desired inequality.
\end{proof}
Go back to proving (c).
Observe that $\bar{p}^{2} > 1/s_{n}$ for $n\ge N$ with sufficiently large $N$ depending only on $r$,
because the assumption $\theta_{[1]}\ge 1/\sqrt{\log s_{n}}$ implies that
$\bar{p} \ge 1-\exp\{-r\theta_{[1]}\} \ge \tilde{c}_{1}/\sqrt{\log s_{n}}$
with $\tilde{c}_{1}$ depending only on $r$
for sufficiently large $n$ depending only on $r$.
Thus, Lemma \ref{lemma: f}, 
together with (\ref{eq: expression using f}), gives
\begin{align}
\Ep_{\theta} \log \frac{\eta_{n}}{\hat{\eta}_{n}} \le \exp\left[ \max\left\{ f(1/s_{n}) , f\left( \frac{\bar{p} + \sqrt{\bar{p}^{2}-1/s_{n} }}{2}\right) \right\}\right] \text{ for }n\ge N.
\label{eq: lemma final 1}
\end{align}
Simple calculations yield
\begin{align}
f(1/s_{n}) &= -2\tilde{c}_{1}^{2}s_{n}/\log s_{n} + 2\tilde{c}_{1}/\sqrt{\log s_{n}} -2/s_{n} + \log s_{n}\nonumber\\
&\le - \tilde{c}_{2} s_{n}/\log s_{n} +\log s_{n} + \tilde{c}_{3}
\label{eq: lemma final 2}
\end{align}
with constants $\tilde{c}_{2}$ and $\tilde{c}_{3}$ depending only on $\tilde{c}_{1}$,
as well as
\begin{align}
f\left\{ \left(\bar{p} + \sqrt{\bar{p}^{2}-1/s_{n}} \right)/2)\right\}
&=-\frac{s_{n}}{2}(\bar{p}-\sqrt{\bar{p}^{2}-1/s_{n}})^{2} -\log\frac{\bar{p}+\sqrt{\bar{p}^{2}-1/s_{n}}}{2}\nonumber\\
&=-\frac{1}{2s_{n}(\bar{p}+\sqrt{\bar{p}^{2}-1/s_{n}})^{2}} -\log\frac{\bar{p}+\sqrt{\bar{p}^{2}-1/s_{n}}}{2}\nonumber\\
&\le -1/(2s_{n}\bar{p}^{2}) -\log(\bar{p}/2)\nonumber\\
&\le -\tilde{c}_{4}(\log s_{n})/s_{n} + \log (\sqrt{\log s_{n}}) + \tilde{c}_{5}
\label{eq: lemma final 3}
\end{align}
with constants $\tilde{c}_{4}$ and $\tilde{c}_{5}$ depending only on $\tilde{c}_{1}$.
Combining (\ref{eq: lemma final 2}) and (\ref{eq: lemma final 3}) with (\ref{eq: lemma final 1}) 
completes the proof.
\end{proof}

\section{Discussion and conclusions}

We have studied asymptotic minimaxity in sparse Poisson sequence models.
We have presented Bayes predictive densities that are adaptive in the asymptotically minimax sense.

Proposition \ref{proposition: optimality of Cauchy or Pareto} shows that
spike-and-slab priors based on polynomially decaying slabs give asymptotically minimax predictive densities. 
This implies that the spike-and-slab approach is useful for sparse count data analysis.
At the same time, asymptotic minimaxity does not tell which slab prior is the best,
and selecting a single slab prior seems to require the other metrics, say, 
the computational complexity or the robustness.
From the viewpoint of computational complexity,
predictive densities based on our slab priors are easily implemented by exact sampling and are a good starting choice.
\cite{Hamuraetal} investigate Bayesian tail robustness and 
derive yet another interesting heavy-tailed prior in the global-local shrinkage literature.
It would also be helpful to understand more about similarities and differences between our work and \cite{Hamuraetal} from the predictive perspective.

We can consider more general sparsity in count data.
Quasi sparsity with a non-shrinking spike component is one such example.
If the value of the spike component is known,
a slight modification of our method would work.
If the value is unknown, further consideration would be necessary.

\section{Acknowledgement}

We are thankful to the Editor, the Associate Editor, and anonymous referees for their helpful comments.
We used the dataset of \textit{the number of crimes in Tokyo prefecture by town and type} 
\cite{Metropolitan}
by Tokyo metropolitan government and Tokyo Metropolitan Police Department.
We used OpenStreetMap \cite{OpenStreetMap} and CartoDB \cite{CartoDB} to creat Figure \ref{figure: Japanese crime}.
This research was supported by JST CREST (JPMJCR1763), MEXT KAKENHI (16H06533),
and JST KAKENHI (17H06570, 19K20222).

\bibliographystyle{plain}
\bibliography{SparsePoisson}

\begin{thebibliography}{10}

\bibitem{Agarwaletal(2002)}
D.~Agarwal, D.~Gelfand, and S.~Citron-Pousty.
\newblock Zero-inflated model with application to spatial count data.
\newblock {\em Environ. and Ecolog. Statist.}, 9:341--355, 2002.

\bibitem{Aitchison(1975)}
J.~Aitchison.
\newblock Goodness of prediction fit.
\newblock {\em Biometrika}, 62:547--554, 1975.

\bibitem{Akaike(1978)}
H.~Akaike.
\newblock A new look at {B}ayes procedure.
\newblock {\em Biometrika}, 65:53--59, 1978.

\bibitem{Aslan(2006)}
M.~Aslan.
\newblock Asymptotically minimax {B}ayes predictive densities.
\newblock {\em Ann. Statist.}, 34:2921--2938, 2006.

\bibitem{BoisbunonandMaruyama(2014)}
A.~Boisbunon and Y.~Maruyama.
\newblock Inadmissibility of the best equivariant predictive density in the
  unknown variance case.
\newblock {\em Biometrika}, 101:733--740, 2014.

\bibitem{BrownGreenshteinRitov(2013)}
L.~Brown, E.~Greenshtein, and Y.~Ritov.
\newblock The poisson compound decision problem revisited.
\newblock {\em J. Am. Statist. Assoc.}, 108:741--749, 2013.

\bibitem{CartoDB}
{CartoDB contributors}.
\newblock \url{https://carto.com}, 2011.

\bibitem{Carvalhoetal(2010)}
C.~Carvalho, N.~Polson, and J.~Scott.
\newblock The horseshoe estimator for sparse signals.
\newblock {\em Biometrika}, 97:465--480, 2010.

\bibitem{CastilloandMismer(2018)}
I.~Castillo and R.~Mismer.
\newblock Empirical {B}ayes analysis of spike and slab posterior distributions.
\newblock {\em Electron. J. Statist.}, 12:3953--4001, 2018.

\bibitem{CastilloandSzabo(2019)}
I.~Castillo and B.~Szab\'{o}.
\newblock Spike and slab empirical {B}ayes sparse credible sets.
\newblock {\em \textrm{To appear in} Bernoulli}, 2019.

\bibitem{ClevensonandZidek(1975)}
L.~Clevenson and J.~Zidek.
\newblock Simultaneous estimation of the means of independent poisson laws.
\newblock {\em J. American Statist. Assoc.}, 70:698--705, 1975.

\bibitem{CorcueraandGiummole(1999)}
J.~Corcuera and F.~Giummol\`{e}.
\newblock A generalized {B}ayes rule for prediction.
\newblock {\em Scand. J. Statist.}, 26:266--279, 1999.

\bibitem{DattaandDunson(2016)}
J.~Datta and D.~Dunson.
\newblock {B}ayesian inference on quasi-sparse count data.
\newblock {\em Biometrika}, 103:971--983, 2016.

\bibitem{DawidandMusio(2015)}
P.~Dawid and M.~Musio.
\newblock {B}ayesian model selection based on proper scoring rules.
\newblock {\em Bayesian Anal.}, 10:479--499, 2015.

\bibitem{Deledalle(2017)}
C.-A. Deledalle.
\newblock Estimation of {K}ullback--{L}eibler losses for noisy recovery
  problems within the exponential family.
\newblock {\em Electron. J. Statist.}, 11:3141--3164, 2017.

\bibitem{Metropolitan}
Tokyo Metropolitan~Police Department.
\newblock The number of crimes in tokyo prefecture by town and type.
\newblock
  \url{http://www.keishicho.metro.tokyo.jp/about\_mpd/jokyo\_tokei/jokyo/ninchikensu.html}.

\bibitem{Efromovich(2011)}
S.~Efromovich.
\newblock Nonparametric regression with predictors missing at random.
\newblock {\em J. American Statist. Assoc.}, 106:306--319, 2011.

\bibitem{Efromovich(2013)}
S.~Efromovich.
\newblock Adaptive nonparametric density estimation with missing observations.
\newblock {\em J. Statist. Plann. Infer.}, 143:637--650, 2013.

\bibitem{Fourdrinieretal(2011)}
D.~Fourdrinier, \'{E}. Marchand, A.~Righi, and W.~Strawderman.
\newblock On improved predicitve density estimation with parametric
  constraints.
\newblock {\em Electron. J. Statist.}, 5:172--191, 2011.

\bibitem{Fourdrinieretal(2019)}
D.~Fourdrinier, \'{E}. Marchand, and W.~Strawderman.
\newblock On efficient prediction and predictive density estimation for normal
  and spherically symmetric models.
\newblock {\em J. Multivariate Anal.}, 173:18--25, 2019.

\bibitem{GeorgeLiangandXu(2006)}
E.~George, F.~Liang, and X.~Xu.
\newblock Improved minimax predictive densities under {K}ullback--{L}eibler
  loss.
\newblock {\em Ann. Statist.}, 34:78--91, 2006.

\bibitem{GhoshandYang(1988)}
M.~Ghosh and M.-C. Yang.
\newblock Simultaneous estimation of poisson means under entropy loss.
\newblock {\em Ann. Statist.}, 16:278--291, 1988.

\bibitem{Hall(2002)}
D.~Hall.
\newblock Zero-inflated {P}oisson and binomial regression with random effects:
  a case study.
\newblock {\em Biometrics}, 56:1030--1039, 2002.

\bibitem{Hamuraetal}
Y.~Hamura, K.~Irie, and S.~Sugasawa.
\newblock On global-local shrinkage priors for count data.
\newblock arXiv:1907.01333.

\bibitem{Hartigan(1998)}
J.~Hartigan.
\newblock The maximum likelihood prior.
\newblock {\em Ann. Statist.}, 26:2083--2103, 1998.

\bibitem{Hartigan(2002)}
J.~Hartigan.
\newblock Bayesian regression using {A}kaike priors.
\newblock New Haven, CT, Yale University, Preprint, 2002.

\bibitem{Johnstone(1984)}
I.~Johnstone.
\newblock Admissibility, difference equations and recurrence in estimating a
  {P}oisson mean.
\newblock {\em Ann. Statist.}, 12:1173--1198, 1984.

\bibitem{JohnstoneandMacGibbon(1992)}
I.~Johnstone and B.~MacGibbon.
\newblock Minimax estimation of a constrained {P}oisson vector.
\newblock {\em Ann. Statist.}, 20:807--831, 1992.

\bibitem{JohnstoneandSilverman(2004)}
I.~Johnstone and B.~Silverman.
\newblock Needles and straw in haystacks: Empirical {B}ayes estimates of
  possibly sparse sequences.
\newblock {\em Ann. Statist.}, 32:1594--1649, 2004.

\bibitem{Kato(2009)}
K.~Kato.
\newblock Improved prediction for a multivariate normal distribution with
  unknown mean and variance.
\newblock {\em Ann. Ins. Statist. Math.}, 61:531--542, 2009.

\bibitem{KobayashiandKomaki(2008)}
K.~Kobayashi and F.~Komaki.
\newblock {B}ayesian shrinkage prediction for the regression problem.
\newblock {\em J. Multivariate Anal.}, 99:1888--1905, 2008.

\bibitem{Komaki(1996)}
F.~Komaki.
\newblock On asymptotic properties of predictive distributions.
\newblock {\em Biometrika}, 83:299--313, 1996.

\bibitem{Komaki(2001)}
F.~Komaki.
\newblock A shrinkage predictive distribution for multivariate normal
  observables.
\newblock {\em Biometrika}, 88:859--864, 2001.

\bibitem{Komaki(2004)}
F.~Komaki.
\newblock Simultaneous prediction of independent {P}oisson observables.
\newblock {\em Ann. Statist.}, 32:1744--1769, 2004.

\bibitem{Komaki(2006JMVA)}
F.~Komaki.
\newblock A class of proper priors for {B}ayesian simultaneous prediction of
  independent {P}oisson observables.
\newblock {\em J. Multivariate Anal.}, 97:1815--1823, 2006.

\bibitem{Komaki(2015)}
F.~Komaki.
\newblock Simultaneous prediction for independent {P}oisson processes with
  different durations.
\newblock {\em J. Multivariate Anal.}, 141:35--48, 2015.

\bibitem{Kubokawaetal(2013)}
T.~Kubokawa, \'{E}. Marchand, W.~Strawderman, and J.-P. Turcotte.
\newblock Minimaxity in predictive density estimation with parametric
  constraints.
\newblock {\em J. Multivariate Anal.}, 116:382--397, 2013.

\bibitem{Lambert(1992)}
D.~Lambert.
\newblock Zero-inflated {P}oisson regression, with an application to random
  defects in manufacturing.
\newblock {\em Technometrics}, 34:1--14, 1992.

\bibitem{LeungandBarron(2006)}
G.~Leung and A.~Barron.
\newblock Information theory and mixing least-squares regressions.
\newblock {\em IEEE Trans. on Information theory}, 52:3396--3410, 2006.

\bibitem{LiangandBarron(2004)}
F.~Liang and A.~Barron.
\newblock Exact minimax strategies for predictive density estimation, data
  compression, and model selection.
\newblock {\em IEEE Trans. on Information theory}, 50:2708--2726, 2004.

\bibitem{LmouddenandMarchand(2019)}
A.~L'Moudden and \'{E}. Marchand.
\newblock On predictive density estimation under $\alpha$-divergence loss.
\newblock {\em Math. Methods Statist.}, 28:127--143, 2019.

\bibitem{Mouddenetal(2017)}
A.~L'Moudden, \'{E}. Marchand, O.~Kortbi, and W.~Strawderman.
\newblock On predictive density estimation for {G}amma models with parametric
  constraints.
\newblock {\em J. Statist. Plan. Infer.}, 185:56--68, 2017.

\bibitem{MacGibbon(2010)}
B.~MacGibbon.
\newblock Minimax estimation over hyperrectangles with implications in the
  {P}oisson case.
\newblock {\em IMS Collections}, 6:32--42, 2010.

\bibitem{MaruyamaandStrawderman(2012)}
Y.~Maruyama and W.~Strawderman.
\newblock {B}ayesian predictive densities for linear regression models under
  $\alpha$-divergence loss: Some results and open problems.
\newblock {\em IMS Collections}, 8:42--56, 2012.

\bibitem{MaruyamaandTakemura(2008)}
Y.~Maruyama and A.~Takemura.
\newblock Admissibility and minimaxity of generalized {B}ayes estimators for
  spherically symmetric family.
\newblock {\em J. Multivariate Anal.}, 99:50--73, 2008.

\bibitem{MatsudaandKomaki(2015)}
T.~Matsuda and F.~Komaki.
\newblock Singular value shrinkage priors for {B}ayesian prediction.
\newblock {\em Biometrika}, 102:843--854, 2015.

\bibitem{MukherjeeandJohnstone(2017)}
G.~Mukherjee and I.~Johnstone.
\newblock On minimax optimality of sparse {B}ayes predictive density estimates.
\newblock arXiv:1707.04380.

\bibitem{MukherjeeandJohnstone(2015)}
G.~Mukherjee and I.~Johnstone.
\newblock Exact minimax estimation of the predictive density in sparse gaussian
  models.
\newblock {\em Ann. Statist.}, 43:937--961, 2015.

\bibitem{Murray(1977)}
G.~Murray.
\newblock A note on the estimation of probability density functions.
\newblock {\em Biometrika}, 64:150--152, 1977.

\bibitem{Ng(1980)}
V.~Ng.
\newblock On the estimation of parametric density function.
\newblock {\em Biometrika}, 67:505--506, 1980.

\bibitem{OpenStreetMap}
{OpenStreetMap contributors}.
\newblock {Planet dump retrieved from https://planet.osm.org}.
\newblock \url{https://www.openstreetmap.org}, 2017.

\bibitem{Robbins(1956)}
H.~Robbins.
\newblock An empirical {B}ayes approach to statistics.
\newblock In J.~Neyman, editor, {\em Proc. 3rd Berkeley Symp. Math. Statist.
  Prob.} University of California Press, 1956.

\bibitem{RockovaandGeorge(2018)}
V.~Ro\u{c}kov\'{a} and E.~George.
\newblock The {S}pick-and-{S}lab {LASSO}.
\newblock {\em J. Amer. Statist. Assoc.}, 113:431--444, 2018.

\bibitem{SuzukiandKomaki(2010)}
T.~Suzuki and F.~Komaki.
\newblock On prior selection and covariate shift of $\beta$-{B}ayesian
  prediction under $\alpha$-divergence risk.
\newblock {\em Comm. Statist. Theory and Methods}, 39:1655--1673, 2010.

\bibitem{XuandLiang(2010)}
X.~Xu and F.~Liang.
\newblock Asymptotic minimax risk of predictive density estimation for
  non-parametric regression.
\newblock {\em Bernoulli}, 16:543--560, 2010.

\bibitem{XuandZhou(2011)}
X.~Xu and D.~Zhou.
\newblock Empirical {B}ayes predictive densities for high-dimensional normal
  models.
\newblock {\em J. Multivariate Anal.}, 102:1417--1428, 2011.

\bibitem{YKK}
K.~Yano, R.~Kaneko, and F.~Komaki.
\newblock Minimax predictive density for sparse count data.

\bibitem{YanoandKomaki(2017)}
K.~Yano and F.~Komaki.
\newblock Asymptotically minimax prediction in infinite sequence models.
\newblock {\em Electron. J. Statist.}, 11:3165--3195, 2017.

\bibitem{Zhangetal(2018)}
F.~Zhang, Y.~Shi, H.~Ng, and R.~Wang.
\newblock Information geometry of generalized {B}ayesian prediction using
  $\alpha$-divergence as loss functions.
\newblock {\em IEEE Tran. on Information Theory}, 64:1812--1824, 2018.

\end{thebibliography}

\newpage

\section*{Supplement to ``Minimax Predictive Density for Sparse Count Data"}

This supplemental material is organized as follows.
In this supplement, the numbering for theorems and propositions follows that of the main manuscript.
Appendix \ref{section: proofs for propositions in section 2} contains proofs of 
all propositions in Section \ref{section: predictive density estimation in sparse Poisson models}. 
Appendix \ref{section: proofs for propositions in section 3} contains proofs of 
all propositions in Section \ref{section: extensions}.
Appendix \ref{Appendix: supplemental experiments} provides additional simulation studies.
Appendix \ref{subsection: Discussion: MCAR} provides discussions on the results for MCAR settings.
Appendix \ref{rare mutation rates} applies our methods to exome sequencing data.
\appendix

\section{Proofs of propositions in Section \ref{section: predictive density estimation in sparse Poisson models}}
\label{section: proofs for propositions in section 2}

This section provides proofs of propositions in Section \ref{section: predictive density estimation in sparse Poisson models}.

\subsection{Proof of Proposition \ref{proposition: exact minimaxity for estimation}}
\label{subsection: proof of proposition exact minimaxity for estimation}


\textit{Lower bound on $\mathcal{E}(\Theta[s_{n}])$}:
The Bayes risk based on a block-independent prior $\Pi_{\mathrm{B},\nu}$ with $\nu>0$
is
\begin{align*}
\int R(\theta,q(\cdot\mid \hat{\theta}_{\Pi_{\mathrm{B},\nu}}))d\Pi_{\mathrm{B},\nu}(\theta)
&= s_{n}[ \mathrm{e}^{-r\nu} \nu \log \{\nu/(\nu \lfloor \eta_{n}\rfloor )\} +(1-\mathrm{e}^{-r\nu})\nu\log(\nu/\nu)] \\
&= s_{n} \nu\mathrm{e}^{-r\nu}\log\lfloor \eta_{n}^{-1} \rfloor
\end{align*}
and thus maximizing $\int R(\theta,q(\cdot\mid\hat{\theta}_{\Pi_{\mathrm{B},\nu}}))d\Pi_{\mathrm{B},\nu}(\theta)$ with respect to $\nu$ yields
\[\mathcal{E}(\Theta[s_{n}])\geq \mathrm{e}^{-1}r^{-1}s_{n}(1+o(1))\log\eta_{n}^{-1}.\]

\textit{Upper bound on $\mathcal{E}(\Theta[s_{n}])$}:
Consider the Bayes estimator based on $\Pi^{*}:=\Pi[\eta_{n},\kappa]$ with $\kappa>0$ given by
\begin{align*}
    \hat{\theta}_{\Pi^{*},i}(x_{i}) = \Bigg{\{}\begin{array}{ll} \eta_{n} \frac{\Gamma(\kappa+1)/r^{\kappa+1}}{1+\eta_{n}\Gamma(\kappa)/r^{\kappa}}, & x_{i}=0, \\ (x_{i}+\kappa)/r, & x_{i} \geq 1, \end{array}
\end{align*}
and introduce the notation
\[
\tilde{\rho}(\lambda) :=
\Ep_{r\lambda}[\lambda\log(\lambda/\hat{\theta}_{\Pi^{*},1}(X_{1})) - \lambda + \hat{\theta}_{\Pi^{*},1}(X_{1})].
\]
Then we get
\[\mathcal{E}(\Theta[s_{n}]) \leq (n-s_{n})\tilde{\rho}(0)+s_{n}\sup_{\lambda>0}\tilde{\rho}(\lambda).\]
So, it suffices to show two (asymptotic) inequalities
\begin{align*}
\tilde{\rho}(0)=O(\eta_{n}) \text{ and } \sup_{\lambda>0}\tilde{\rho}(\lambda)=(1+o(1))\mathrm{e}^{-1}r^{-1}s_{n}\log\eta_{n}^{-1} .
\end{align*}
Consider $\sup_{\lambda>0}\tilde{\rho}(\lambda)$.
For $\lambda>0$, $\tilde{\rho}(\lambda)$ is expressed as
\begin{align*}
\tilde{\rho}(\lambda) &= 
\Ep_{r\lambda} 1_{X_{1}\geq 1}(X_{1}) \left[\lambda\log\left(\frac{r\lambda}{X_{1}+\kappa}\right)-\lambda+\frac{X_{1}+\kappa}{r}\right]
\\
&\quad+
\Ep_{r\lambda } 1_{X_{1}=0}(X_{1})\left[\lambda\log\left(\frac{\lambda}{\eta_{n}}\frac{1+\eta_{n}\Gamma(\kappa)/r^{\kappa}}{\Gamma(\kappa+1)/r^{\kappa+1}}\right) - \lambda+ \eta_{n} \frac{\Gamma(\kappa+1)/r^{\kappa+1}}{1+\eta_{n}\Gamma(\kappa)/r^{\kappa}}\right].
\end{align*}
By the same argument as in the proof of Theorem \ref{theorem: exact minimaxity within sparse Poisson models},
we obtain 
\[\sup_{\lambda>0}\tilde{\rho}(\lambda)= (1+o(1)) \mathrm{e}^{-1}r^{-1}s_{n}\log\eta_{n}^{-1}
\text{ and }
\tilde{\rho}(0)= O(\eta_{n}),\]
which yields the desired upper bound on $\mathcal{E}(\Theta[s_{n}])$ and completes the proof.
\qed

\subsection{Proof of Proposition \ref{theorem: adaptive minimaxity for estimation}}
\label{subsection: proof of adaptive minimax estimation}

It suffices to show that $R_{\mathrm{e}}(\theta,\hat{\theta}_{\Pi[\hat{\eta}_{n},\kappa]}) - R_{\mathrm{e}}(\theta,\hat{\theta}_{\Pi[\eta_{n},\kappa]})$ is uniformly negligible compared to $\mathcal{E}(\Theta[s_{n}])$
since $\sup_{\theta\in\Theta[s_{n}]} R_{\mathrm{e}}(\theta, \hat{\theta}_{\Pi[\eta_{n},\kappa]}) \sim \mathcal{E}(\Theta[s_{n}])$.
We begin with decomposing $R_{\mathrm{e}}(\theta,\hat{\theta}_{\Pi[\hat{\eta}_{n},\kappa]}) - R_{\mathrm{e}}(\theta,\hat{\theta}_{\Pi[\eta_{n},\kappa]})$ in such a way that
\begin{align}
R_{\mathrm{e}}(\theta,\hat{\theta}_{\Pi[\hat{\eta}_{n},\kappa]}) 
- R_{\mathrm{e}}(\theta,\hat{\theta}_{\Pi[\eta_{n},\kappa]})
&=\sum_{i=1}^{n} 
\underbrace{\Ep_{\theta}\left\{\theta_{i}\log \frac{  \hat{\theta}_{\Pi[\eta_{n},\kappa],i}  }{ \hat{\theta}_{\Pi[\hat{\eta}_{n},\kappa],i}} + \hat{\theta}_{\Pi[\hat{\eta}_{n},\kappa],i}  - \hat{\theta}_{\Pi[\eta_{n},\kappa],i} \right\}}_{:=G_{i}}
\nonumber\\
&=\sum_{i\in\mathcal{A}}G_{i} + \sum_{i \not\in \mathcal{A}}G_{i},
\label{decomposition of estiamtion difference}
\end{align}
where $\mathcal{A}:=\mathcal{A}(\theta):=\{i:\theta_{i}\neq 0\}$.
In three steps, we will show 
\[R_{\mathrm{e}}(\theta,\hat{\theta}_{\Pi[\hat{\eta}_{n},\kappa]}) - R_{\mathrm{e}}(\theta,\hat{\theta}_{\Pi[\eta_{n},\kappa]})=o(s_{n}\log (n/s_{n})) \text{ uniformly in } \theta\in\Theta[s_{n}].\]

\textit{Step 1: Bounding $G_{i}$ for $i\not\in\mathcal{A}$}.
By the same argument as in Step 1 of the proof of Theorem \ref{theorem: adaptive},
we have
\begin{align}
G_{i}
&\le c_{1}\eta_{n} \text{ with }c_{1}:=3\frac{\Gamma(\kappa+1)}{r^{\kappa+1}} \eta_{n}.
\label{bound Gi for i not in A}
\end{align}

\textit{Step 2: Bounding $G_{i}$ for $i\in\mathcal{A}$}.
Since 
$\hat{\theta}_{\Pi[\eta_{n},\kappa],i}(X_{i})=\hat{\theta}_{\Pi[\hat{\eta}_{n},\kappa],i}(X_{i})$ for $X_{i}\ge 1$
from the explicit expression of $\hat{\theta}_{\Pi[\eta_{n},\kappa]}$,
we have
\begin{align*}
G_{i}
&=
\Ep_{\theta} 1_{X_{i}=0}\left\{\theta_{i}\log \frac{  \hat{\theta}_{\Pi[\eta_{n},\kappa],i}  }{ \hat{\theta}_{\Pi[\hat{\eta}_{n},\kappa],i}} + \hat{\theta}_{\Pi[\hat{\eta}_{n},\kappa],i}  - \hat{\theta}_{\Pi[\eta_{n},\kappa],i} \right\}
\\
&=
\Ep_{\theta} 1_{X_{i}=0}\left\{
\theta_{i}\log \frac{\eta_{n}}{\hat{\eta}_{n}}\frac{1+\hat{\eta}_{n}\Gamma(\kappa)/r^{\kappa}}{1+\eta_{n}\Gamma(\kappa)/r^{\kappa}}
+
\frac{ (\hat{\eta}_{n}-\eta_{n})\Gamma(\kappa+1)/r^{\kappa+1} }{ \{1+\hat{\eta}_{n}\Gamma(\kappa)/r^{\kappa} \} \{ 1+ \eta_{n}\Gamma(\kappa)/r^{\kappa} \}}
\right\}\\
&\le 
\Ep_{\theta} 1_{X_{i}=0}\left\{
\theta_{i}\log \frac{\eta_{n}}{\hat{\eta}_{n}}\frac{1+\hat{\eta}_{n}\Gamma(\kappa)/r^{\kappa}}{1+\eta_{n}\Gamma(\kappa)/r^{\kappa}}\right\}
+\eta_{n}\Ep_{\theta}|\hat{s}_{n}/s_{n}-1|\frac{\Gamma(\kappa+1)}{r^{\kappa+1}}\\
&\le
\Ep_{\theta} 1_{X_{i}=0}\left\{
\theta_{i}\log \frac{\eta_{n}}{\hat{\eta}_{n}}\frac{1+\hat{\eta}_{n}\Gamma(\kappa)/r^{\kappa}}{1+\eta_{n}\Gamma(\kappa)/r^{\kappa}}\right\}
+3\frac{\Gamma(\kappa+1)}{r^{\kappa+1}}\eta_{n},
\end{align*}
where the last inequality follows from Lemma \ref{lemma: properties of hats_n} (b).
Consider the following decomposition of the first term in the rightmost side of the above inequality:
\begin{align*}
&\Ep_{\theta} 1_{X_{i}=0}\left\{
\theta_{i}\log \frac{\eta_{n}}{\hat{\eta}_{n}}\frac{1+\hat{\eta}_{n}\Gamma(\kappa)/r^{\kappa}}{1+\eta_{n}\Gamma(\kappa)/r^{\kappa}}\right\}
\\
&=
\underbrace{\theta_{i}
\Ep_{\theta} 1_{X_{i}=0} \log \frac{\eta_{n}}{\hat{\eta}_{n}}  }_{=:T^{(1)}_{i}}
+
\underbrace{\theta_{i}
\Ep_{\theta} 1_{X_{i}=0} \log \frac{1+\hat{\eta}_{n} \Gamma(\kappa)/r^{\kappa}}{1+ \eta_{n}\Gamma(\kappa)/r^{\kappa}} }_{=:T^{(2)}_{i}}.
\end{align*}
By the same argument as in Step 3 of the proof of Theorem \ref{theorem: adaptive},
we have $T^{(1)}_{i}=o(\log (n/s_{n}))$ for any $s_{n}$ such that $1\le \inf_{n} s_{n}$ and $\sup_{n} s_{n}/n < 1$.
Let us bound $T^{(2)}_{i}$.
Observe that 
\begin{align*}
\log \frac{1 + \hat{\eta}_{n}\Gamma(\kappa)/r^{\kappa}}{1 + \eta_{n} \Gamma(\kappa)/r^{\kappa}}
\le \log \left\{ 1 + \frac{|\hat{\eta}_{n}-\eta_{n}| \Gamma(\kappa)/r^{\kappa}}{1+\eta_{n}\Gamma(\kappa)/r^{\kappa}}\right\}
\le \eta_{n} |\hat{s}_{n}/s_{n}-1| \frac{\Gamma(\kappa)}{r^{\kappa}}.
\end{align*}
Together with Lemma \ref{lemma: properties of hats_n} (b), this gives
\begin{align*}
T^{(2)}_{i} 
&\le \eta_{n} \frac{\Gamma(\kappa)}{r^{\kappa}} \theta_{i} \Ep_{\theta}[1_{X_{i}=0} |\hat{s}_{n}/s_{n}-1|]\\
&\le \frac{\Gamma(\kappa)}{r^{\kappa}} \sup_{\theta_{i}>0}\{\theta_{i}\mathrm{e}^{-r\theta_{i}/2}\} \sqrt{\Ep_{\theta} |\hat{s}_{n}/s_{n}-1|^{2}}\\
&\le c_{2} \text{ with some $c_{2}$ depending only on $r$},
\end{align*}
which concludes that 
\begin{align}
G_{i}=T^{(1)}_{i}+T^{(2)}_{i}=o(\log (n/s_{n})).
\label{bound Gi for i in A}
\end{align}

\textit{Step 3}. Combining (\ref{bound Gi for i not in A}) and (\ref{bound Gi for i in A}) into (\ref{decomposition of estiamtion difference})
yields
\begin{align*}
R_{\mathrm{e}}(\theta,\hat{\theta}_{\Pi[\hat{\eta}_{n},\kappa]}) 
- R_{\mathrm{e}}(\theta,\hat{\theta}_{\Pi[\eta_{n},\kappa]})
&=o(s_{n}\log (n/s_{n})),
\end{align*}
which completes the proof. \qed

\subsection{Proof of Proposition \ref{proposition: suboptimality of spike and slab}}

Violating Condition (\ref{weak tail robustness}) in the main manuscript implies for some $i$,
\begin{align*}
\mathop{\mathrm{liminf}}_{\theta_{i}\to\infty}\hat{\theta}_{\Pi,i}(X_{i};t) / \theta_{1} >1
\text{ almost surely}.
\end{align*}
Together with Lemma \ref{lemma: Kullback--Leibler formula},
this gives
\begin{align*}
\mathop{\mathrm{liminf}}_{\theta_{i}\to\infty}\frac{R(\theta,q_{\Pi_{\Pi}})}{\theta_{i}}
= 
\mathop{\mathrm{liminf}}_{\theta_{i}\to\infty}
\int_{r}^{r+1}\Ep_{t\theta} 
\left[ \log \frac{\theta_{i}}{\hat{\theta}_i (X_{i};t)}
+\frac{\hat{\theta}_{i} (X_{i};t)}{\theta_{i}} -1
\right]dt>0,
\end{align*}
which completes the proof.
\qed

\subsection{Proof of Proposition 
\ref{proposition: optimality of Cauchy or Pareto}}

Without loss of generality, we can assume that $\eta_{n} \le 1/2$.
Further, we can assume $\Lambda>2$ (if not, replace $\Lambda$ by $\Lambda+2$).
It suffices to show $\Pi=\Pi_{\gamma}[\eta_{n}]$ satisfies Condition \ref{posterior condition} in the main manuscript.

Let $I_{\gamma}(s;t):= \int_{0}^{\infty} \lambda^{s-1} \mathrm{e}^{-t\lambda} \gamma(\lambda) d\lambda $.
Note $I_{\gamma}(s;t)$ is finite for $s\ge 1$ and $t>0$ because
\begin{align*}
I_{\gamma}(s;t) 
\le \underbrace{ \int_{0}^{1}  \mathrm{e}^{-t\lambda}\gamma(\lambda)d\lambda }_{<\infty \text{ by Condition 
(\ref{condition integrability}) in the main manuscript}} 
+ 
\underbrace{ \int_{1}^{\infty}\lambda^{s-1} \mathrm{e}^{-t\lambda} \gamma(\lambda)d\lambda }_{<\infty \text{ by Condition (\ref{condition on slab}) in the main manuscript}}.
\end{align*}
Observe that for $i=1,\ldots,n$,
\begin{align*}
\hat{\theta}_{\Pi,i}(X_{i};t) &= \frac{ \eta_{n} I_{\gamma}(X_{i}+2;t) }{ (1-\eta_{n}) 0^{X_{i}} + \eta_{n} I_{\gamma}(X_{i}+1;t)}.
\end{align*}
This gives $C_{1} \eta_{n} \le \hat{\theta}_{\Pi,i}(0;t) \le C_{2} \eta_{n}$
and
$C_{3} \le \hat{\theta}_{\Pi,i}(X_{i};t) \le C_{4}, \,X_{i}=1,\ldots, \lceil \Lambda-1\rceil$,
where
\begin{align*}
&C_{1}:= \frac{I_{\gamma}(2;r+1)}{1+I_{\gamma}(1;r)}, \quad\quad\quad\quad\quad\quad\quad\quad\quad C_{2}:= 2I_{\gamma}(2;r),\\
&C_{3}:= \min_{s=1,\ldots,\lceil \Lambda-1\rceil}\frac{I_{\gamma}(s+2;r+1)}{I_{\gamma}(s+1;r)}, 
 \text{ and } C_{4}:=\max_{s=1,\ldots,\lceil \Lambda-1\rceil}\frac{I_{\gamma}(s+2;r)}{I_{\gamma}(s+1;r+1)}.
\end{align*}
These conclude that $\Pi$ meets (P1) and (P2) of Condition \ref{posterior condition}.

Observe that the integration-by-parts formula yields
\begin{align*}
I_{\gamma}(s+2;t) = \frac{s+1}{t} I_{\gamma}(s+1;t) + \frac{1}{t}\int_{0}^{\infty}\lambda^{s}\mathrm{e}^{-t\lambda}\gamma(\lambda)
\left\{\lambda\frac{d}{d\lambda} \log \gamma(\lambda)\right\} d\lambda.
\end{align*}
This, together with Condition (\ref{condition on slab}) in \cite{YKK}, implies
\begin{align*}
\frac{X_{i}+1-\Lambda}{t} \le \hat{\theta}_{\Pi,i}(X_{i};t) = \frac{I_{\gamma}(X_{i}+2;t)}{I_{\gamma}(X_{i}+1;t)} \le \frac{X_{i}+1+\Lambda}{t}, \, X_{i}>  \lceil \Lambda-1\rceil,
\end{align*}
which concludes that $\Pi$ satisfies (P3) of Condition \ref{posterior condition} and completes the proof.
\qed

\subsubsection{Proof of Proposition 
\ref{proposition: optimal scaling}}
\label{subsection: proof for optimal scaling}

From the explicit form of $\hat{\theta}_{\Pi[\eta_{n},\kappa]}$,
we can replace $C_{2}$ in (\ref{bounding the first integral}) in the main manuscript by $\Gamma(\kappa+1) \{r^{-\kappa}-(r+1)^{\kappa}\}/\kappa$.
After this replacement, we multiply $\eta_{n}$ by $L$ in $\Pi^{*}=\Pi[\eta_{n},\kappa]$ of 
the proof of Theorem \ref{theorem: exact minimaxity within sparse Poisson models}.
Then, inequalities (\ref{bounding the first integral})-(\ref{bounding the third integral}) 
in the main manuscript give
\begin{align*}
    \sup_{\theta\in\Theta[s_{n}]} R(\theta, q_{\Pi[L\eta_{n},\kappa]})&= s_{n}\sup_{\lambda>0}R_{1}(\lambda,q_{\Pi[L\eta_{n},\kappa],1}) + (n-s_{n})R_{1}(0,q_{\Pi[L\eta_{n},\kappa],1})\\
    &\leq \mathcal{C}s_{n}\log (\eta_{n}^{-1}L^{-1}) + \Upsilon_{1} + \mathcal{K} L n\eta_{n}\\
    &= \mathcal{C}s_{n}\log \eta_{n}^{-1} + s_{n}\{\mathcal{K} L  - \mathcal{C}\log L\} + \Upsilon_{1},
\end{align*}
where $\Upsilon_{1}$ consists of terms independent of $L$ and terms that are $o(s_{n})$. This completes the proof.\qed

\section{Proofs for Section \ref{section: extensions}}
\label{section: proofs for propositions in section 3}

This section presents proofs of propositions in Section \ref{section: extensions}.

\subsection{Proof of Proposition \ref{proposition: quasi-sparse minimax}}
\label{subsection: proof of proposition quasi-sparse minimax}

The proof follows almost the same line as in the proof of Theorem \ref{theorem: exact minimaxity within sparse Poisson models}.

\subsubsection*{Step 1: Lower bound on $\mathcal{R}(\Theta[s_{n},\varepsilon_{n}])$}

We start with extracting an exact sparse subspace in $\Theta[s_{n},\varepsilon_{n}]$.
Fix $\nu^{\circ}$ in such a way that $\mathrm{e}^{-r\nu^{\circ}}-\mathrm{e}^{-(r+1)\nu^{\circ}}=\mathcal{C}$.
Taking a sufficiently large $n$ (depending only on $\nu^{\circ}$), we have
\[
\left\{ \theta \in \R_{+}^{n} : \theta_{[1]} = \cdots = \theta_{[ s_{n} ] } = \nu , \ \theta_{ [ s_{n} + 1 ] } = \cdots = \theta_{[n]} = 0 \right\} 
 \subset 
\Theta [ s_{n} , \varepsilon_{n} ],
\]
where $\theta_{[i]}$ is the $i$-th largest component of $\{\theta_{i}:i=1,\ldots,n\}$.
This gives
\[
\mathcal{R}(\Theta[s_{n},\varepsilon_{n}]) 
\geq \int R(\theta, q_{\Pi_{ \mathrm{B}, \nu^{\circ}} }) d \Pi_{\mathrm{B}, \nu }(\theta) 
\geq \{ \mathcal{C} s_{n} \log (n / s_{n}) \} (1+o(1)),
\]
which completes Step 1.

\subsubsection*{Step 2: Upper bound on $\mathcal{R}(\Theta[s_{n},\varepsilon_{n}])$}

Recall that $N(\theta,\varepsilon):=\#\{i:\theta_{i} > \varepsilon\}$.
By definition of $N(\theta,\varepsilon)$, we have, for an independent prior $\Pi$,
\begin{align}
R(\theta, q_{\Pi}) 
& \leq N(\theta, \varepsilon) \sup_{\varepsilon_{n}< \lambda} R_{i} (\lambda , q_{\Pi, i})
+ (n - N(\theta, \varepsilon) ) \sup_{\lambda \leq \varepsilon_{n}} R_{i} ( \lambda, q_{\Pi, i}),
\label{eq: upper bound in case (b)}
\end{align}
where
$R_{i}$ ($i=1,\ldots,n$) is the Kullback--Leibler risk for the $i$-th coordinate.
Since $N(\theta,\varepsilon_{n})\leq s_{n}$ for $\theta\in\Theta[s_{n},\varepsilon_{n}]$, we have
\begin{align}
R(\theta, q_{\Pi})
& \leq s_{n} \sup_{0<\lambda}R_{i}(\lambda,q_{\Pi,i}) + n \sup_{\lambda \leq \varepsilon_{n}} R_{i} (\lambda, q_{\Pi,i}).
\label{eq: upper bound 2 in case (b)}
\end{align}
We put a prior $\Pi^{*} = \Pi[\eta_{n},\kappa]$ into $\Pi$ and then we have $R_{i} (\lambda , q_{\Pi^{*}, i}) = \rho(\lambda)$.
From (\ref{bounding the first integral})-(\ref{bounding the third integral}) in the main manuscript, we have 
$\sup_{0 < \lambda}R_i(\lambda,q_{\Pi^{*}, i})\leq \mathcal{C}\log(n / s_n)$
as well as $\sup_{\lambda \leq \varepsilon_{n}} R_{i}(\lambda,q_{\Pi_{\Pi^{*}}}) = O(\varepsilon_{n}\log \{\eta_{n}^{-1}\})$,
which completes the proof.\qed

\subsection{Proof of Proposition \ref{proposition: adaptive in quasi-sparse}}

The proof follows almost the same line as in the proof for Theorem \ref{theorem: adaptive}.
For $\theta\in\Theta[s_{n},\varepsilon_{n}]$,
let $\mathcal{A}:=\mathcal{A}(\theta)=\{i:\theta_{i}>\varepsilon_{n}\}$.

For $i\in\mathcal{A}$, we use the bound (\ref{Case i in A}) in the main manuscript.
For $i\not\in\mathcal{A}$ and for $X_{i}\geq 1$,
from (\ref{Case ii}) in the main manuscript and (\ref{Case iv}) in the main manuscript,
we have
\begin{align}
    \Ep_{Y_{i}\mid\theta_{i}}\log \left\{\frac{q_{\Pi[\eta_{n},\kappa],i}(Y_{i}\mid X_{i})}{q_{\Pi[\hat{\eta}_{n},\kappa],i}(Y_{i}\mid X_{i})}\right\}
    =0.
    \label{quasi Case x_i > 0}
\end{align}
For $i\not\in\mathcal{A}$ and for $X_{i}=0$,
from (\ref{Case i}) in the main manuscript and (\ref{Case iii}) in the main manuscript,
we have
\begin{align}
&\Ep_{Y_{i}\mid\theta_{i}}\log \left\{\frac{q_{\Pi[\eta_{n},\kappa],i}(Y_{i}\mid X_{i}=0)}{q_{\Pi[\hat{\eta}_{n},\kappa],i}(Y_{i}\mid X_{i}=0)}\right\}
\nonumber\\
&\leq
\left|\frac{\hat{s}_{n}}{s_{n}}-1\right|+\log \left\{1+\frac{\Gamma(\kappa)}{(r+1)^{\kappa}}\right\}
+(1-\mathrm{e}^{-\varepsilon_{n}})
\left\{\left|\frac{\hat{s}_{n}}{s_{n}}-1\right|+\log \frac{s_{n}}{\hat{s}_{n}}\right\}.
\label{quasi Case x_i = 0}
\end{align}

Thus these, together with (\ref{Case i in A}),
(\ref{eq: eval T1 1}), (\ref{eq: eval T1 2}), and (\ref{Decomposition of KL with plug in}) in the main manuscript, completes the proof.
\qed

\subsection{Proof of Proposition \ref{proposition: minimax risk in random design}}
\subsubsection{Proof of Proposition \ref{proposition: minimax risk in random design}}
\label{subsection: imgredient of theorem minimax risk in random design}

We begin with presenting the proof by assuming that the next lemma holds.
\begin{condition}
\label{condition: condition on ris}
The asymptotic equality
$\overline{\mathcal{C}}:=\sum_{i = 1}^n\mathcal{C}_i / n \sim \sum_{i \in J} 
\mathcal{C}_i / s_n$ 
holds
for any subset $J\subset\{1,\ldots,n\}$ satisfying $|J| = s_n$.
\end{condition}

\begin{lemma}
\label{lemma: exact minimaxity within sparse Poisson models in inhomogeneous case}
Fix an infinite sequence $\{r_i\in(0, \infty):i\in\mathbb{N}\}$ such that $0 < \inf_i r_i 
\leq \sup_i r_i <\infty$.
Fix also $0<\kappa \le 1$.
Suppose Condition \ref{condition: condition on ris} holds. Then, the asymptotic equality
\begin{align*}
\overline{\mathcal{R}}(\Theta[s_{n}] \mid \{r_{i}\}) 
\sim \sup_{\theta\in\Theta[s_{n}]} R(\theta, q_{\Pi[\eta_{n},\kappa]}\mid \{r_{i}\})
\sim \overline{\mathcal{C}} s_{n} \log (\eta_{n}^{-1})
\end{align*}
holds as $n\to\infty$ and $\eta_{n}=s_{n}/n\to 0$.
\end{lemma}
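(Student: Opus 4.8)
The plan is a sandwich argument following the two steps of the proof of Theorem~\ref{theorem: exact minimaxity within sparse Poisson models}: a lower bound on $\overline{\mathcal{R}}(\Theta[s_{n}]\mid\{r_{i}\})$ by maximizing a Bayes risk over block-independent priors, and an upper bound on $\sup_{\theta\in\Theta[s_{n}]}R(\theta,q_{\Pi[\eta_{n},\kappa]}\mid\{r_{i}\})$ by a coordinatewise analysis. Since $\overline{\mathcal{R}}(\Theta[s_{n}]\mid\{r_{i}\})\le\sup_{\theta\in\Theta[s_{n}]}R(\theta,q_{\Pi[\eta_{n},\kappa]}\mid\{r_{i}\})$ always, it suffices to show that the lower bound and the upper bound are each $\overline{\mathcal{C}}\,s_{n}\log(\eta_{n}^{-1})(1+o(1))$. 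As a preliminary, one records the inhomogeneous counterpart of Lemma~\ref{lemma: Kullback--Leibler formula}: running the Hudson-lemma computation of that proof along the path $u\mapsto(r_{1}+u,\dots,r_{n}+u)$, $u\in[0,1]$, in the product of Poisson clocks yields, for any prior $\Pi$,
\[
R(\theta,q_{\Pi}\mid\{r_{i}\})=\int_{0}^{1}\sum_{i=1}^{n}\Ep_{(r_{i}+u)\theta_{i}}\!\left[\theta_{i}\log\frac{\theta_{i}}{\hat{\theta}_{\Pi,i}}-\theta_{i}+\hat{\theta}_{\Pi,i}\right]du,
\]
with $\hat{\theta}_{\Pi,i}$ the Bayes estimate of $\theta_{i}$ at clock $(r_{1}+u,\dots,r_{n}+u)$; for a product prior this reduces to $\sum_{i}\int_{r_{i}}^{r_{i}+1}t^{-1}R_{\mathrm{e},i}(t\theta_{i},t\hat{\theta}_{\Pi,i}(\cdot;t))\,dt$. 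The summand is nonnegative pointwise in the observation and vanishes whenever $\theta_{i}=0$, so the identity remains legitimate for the block-independent prior below even at configurations where $\hat{\theta}_{\Pi,i}=0$.

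For the lower bound, fix (as in the proof of Theorem~\ref{theorem: exact minimaxity within sparse Poisson models}) $\lfloor s_{n}\rfloor$ disjoint blocks $B_{1},\dots,B_{\lfloor s_{n}\rfloor}$ of length $m_{n}:=\lfloor\eta_{n}^{-1}\rfloor$, leaving $o(n)$ coordinates uncovered, where one sets $\theta=0$. Use the block-independent prior that on each $B_{j}$ places a single spike at a uniformly random location $i\in B_{j}$, with value $\nu_{i}^{\ast}:=\log\{(r_{i}+1)/r_{i}\}$ — the maximizer of $\nu\mapsto\mathrm{e}^{-r_{i}\nu}-\mathrm{e}^{-(r_{i}+1)\nu}$, so that this maximum equals $\mathcal{C}_{i}$; by $0<\inf_{i}r_{i}\le\sup_{i}r_{i}<\infty$ the $\nu_{i}^{\ast}$ are uniformly bounded. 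A Bayes computation as in (\ref{Bayes estimator})--(\ref{sum explicit form}) shows that, on the event that all observations in $B_{j}$ vanish (which, since only the spike coordinate can be nonzero, occurs with probability $\mathrm{e}^{-(r_{i}+u)\nu_{i}^{\ast}}$ when the spike is at $i$), the Bayes estimate of $\theta_{i}$ is at most $C\nu_{i}^{\ast}/m_{n}$ with $C$ depending only on $\inf_{i}r_{i},\sup_{i}r_{i}$. Keeping only this event in the nonnegative summand above, integrating over $u\in[0,1]$ (which turns $\nu_{i}^{\ast}\mathrm{e}^{-(r_{i}+u)\nu_{i}^{\ast}}$ into $\mathcal{C}_{i}$), and averaging over the prior gives
\[
\overline{\mathcal{R}}(\Theta[s_{n}]\mid\{r_{i}\})\ \ge\ \int R(\theta,q_{\Pi}\mid\{r_{i}\})\,d\Pi(\theta)\ \ge\ \frac{\log m_{n}+O(1)}{m_{n}}\sum_{i\in B_{1}\cup\cdots\cup B_{\lfloor s_{n}\rfloor}}\mathcal{C}_{i}.
\]
Since each $\mathcal{C}_{i}$ lies in a fixed compact subinterval of $(0,1)$, the last sum equals $n\overline{\mathcal{C}}(1-o(1))$, and with $m_{n}\sim\eta_{n}^{-1}$, $n/m_{n}\sim s_{n}$, $\log m_{n}\sim\log\eta_{n}^{-1}$ this yields $\overline{\mathcal{R}}(\Theta[s_{n}]\mid\{r_{i}\})\ge\overline{\mathcal{C}}\,s_{n}\log(\eta_{n}^{-1})(1-o(1))$; this step does not use Condition~\ref{condition: condition on ris}.

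For the upper bound, $\Pi[\eta_{n},\kappa]$ is a product prior, so $R(\theta,q_{\Pi[\eta_{n},\kappa]}\mid\{r_{i}\})=\sum_{i}R_{i}(\theta_{i};r_{i})$, where $R_{i}(\cdot\,;r_{i})$ is exactly the single-coordinate risk treated in Step~2 of the proof of Theorem~\ref{theorem: exact minimaxity within sparse Poisson models}, now with clock range $(r_{i},r_{i}+1)$, and $\Pi[\eta_{n},\kappa]$ satisfies Condition~\ref{posterior condition} with constants that are continuous in $r$. Because each $r_{i}$ lies in a compact subinterval $[\inf_{i}r_{i},\sup_{i}r_{i}]$ of $(0,\infty)$, the bounds (\ref{bounding the first integral})--(\ref{bounding the third integral}) hold with every constant and every $o(1)$ term uniform in $i$ (this is where $0<\kappa\le1$ enters), giving $R_{i}(0;r_{i})=O(\eta_{n})$ and $\sup_{\lambda>0}R_{i}(\lambda;r_{i})\le(\mathcal{C}_{i}+o(1))\log\eta_{n}^{-1}$ uniformly. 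Hence for large $n$ the worst-case parameter has support of size exactly $s_{n}$, and
\[
\sup_{\theta\in\Theta[s_{n}]}R(\theta,q_{\Pi[\eta_{n},\kappa]}\mid\{r_{i}\})\ \le\ \Bigl(\max_{|J|=s_{n}}\sum_{i\in J}\mathcal{C}_{i}\Bigr)\log\eta_{n}^{-1}+o(s_{n}\log\eta_{n}^{-1})+O(s_{n}).
\]
Applying Condition~\ref{condition: condition on ris} along a sequence of maximizing subsets $J_{n}^{\ast}$ gives $\max_{|J|=s_{n}}\sum_{i\in J}\mathcal{C}_{i}\sim s_{n}\overline{\mathcal{C}}$, so the right-hand side is $\overline{\mathcal{C}}\,s_{n}\log(\eta_{n}^{-1})(1+o(1))$. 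Combined with the lower bound and with $\overline{\mathcal{R}}(\Theta[s_{n}]\mid\{r_{i}\})\le\sup_{\theta}R(\theta,q_{\Pi[\eta_{n},\kappa]}\mid\{r_{i}\})$, this establishes both asymptotic equalities.

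I expect the main obstacle to be the uniformity bookkeeping in the upper bound: one must revisit every estimate feeding into (\ref{bounding the first integral})--(\ref{bounding the third integral}) — the Condition~\ref{posterior condition} constants, the threshold $\lambda^{\circ}$, and the auxiliary functions $F_{1},F_{2}$ — and confirm that the resulting $o(1)$ is uniform as $r$ ranges over a compact subinterval of $(0,\infty)$, which holds by continuity but is tedious to spell out. A secondary point is the Bayes-estimate bound on each block of the lower-bound prior once the spike value is allowed to depend on the $r_{i}$ at its location; everything else is a transcription of the proof of Theorem~\ref{theorem: exact minimaxity within sparse Poisson models} with $\mathcal{C}$ replaced coordinatewise by $\mathcal{C}_{i}$.
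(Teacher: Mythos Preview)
Your proposal is correct and follows essentially the same two-step sandwich argument as the paper: the lower bound via a block-independent prior whose spike at coordinate $i$ has height $\nu_i^\ast=\log\{(r_i+1)/r_i\}$ (the paper calls this a ``varied-spike block-independent'' prior and arrives at the same maximizer), and the upper bound via the coordinatewise risk of $q_{\Pi[\eta_n,\kappa]}$, sorting coordinates by $\mathcal{C}_i$ and invoking Condition~\ref{condition: condition on ris} to replace $\max_{|J|=s_n}\sum_{i\in J}\mathcal{C}_i$ by $s_n\overline{\mathcal{C}}$. The only cosmetic difference is that the paper computes the block Bayes estimator exactly (obtaining the weights $w_k^{(j)}=\mathrm{e}^{-r_k^{(j)}\nu_k^{(j)}}/\sum_l\mathrm{e}^{-r_l^{(j)}\nu_l^{(j)}}$) and tracks the $-\theta_i+\hat\theta_i$ terms explicitly rather than absorbing them into an $O(1)$; your shortcut via pointwise nonnegativity of the KL integrand is fine, though the aside that the summand ``vanishes whenever $\theta_i=0$'' is not literally true (it equals $\hat\theta_i\ge0$), which does not affect the lower bound.
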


Using Lemma \ref{lemma: exact minimaxity within sparse Poisson models in inhomogeneous case}, 
we will show that
under Condition \ref{Condition: tail and edge},
for $\delta\in(0,1)$,
the asymptotic inequality 
\begin{align}
\left|\overline{\mathcal{R}}(\Theta[s_{n}] \mid \{r_{i}\}) 
	/ (\mathbb{E}_G \overline{\mathcal{C}} s_{n} \log \eta_n^{-1}) - 1\right|
    \leq b_n + \sqrt{ \{1/(2n)\}\log (2/\delta)}
\label{eq: probab}
\end{align}
holds with probability greater than $1-\delta$, 
where $b_n$ is an $o_{P}(1)$ term independent of $\delta$.
From Lemma \ref{lemma: exact minimaxity within sparse Poisson models in inhomogeneous case},
it suffices to show that under Condition \ref{Condition: tail and edge},
for $\delta\in(0,1)$,
the asymptotic inequality 
\[\left|\overline{\mathcal{C}} s_{n} \log \eta_n^{-1})
	/ (\mathbb{E}_G \overline{\mathcal{C}} s_{n} \log \eta_n^{-1}) - 1\right|
    \leq b_n + \sqrt{ \{1/(2n)\}\log (2/\delta)}\]
holds with probability greater than $1-\delta$.
Take the expectation of $\overline{\mathcal{C}}$ with respect to $G$. 
Since $\overline{\mathcal{C}}$ is in (0,1),
the Hoeffding inequality gives
\[\mathrm{Pr}
	\left(
    	\left|
        	\overline{\mathcal{C}} - \mathbb{E}_G \overline{\mathcal{C}} 
		\right|
		\geq t
	\right)
    \leq 2\exp\left(-2nt^2\right), \ t>0.
\]
This implies that for any $\delta\in(0,1)$, we have
\begin{equation}
\left|\overline{\mathcal{C}} - \mathbb{E}_G \overline{\mathcal{C}} \right| 
\leq \sqrt{\{1/(2n)\} \log(2 / \delta)}
\label{concentration}
\end{equation}
with probability greater than $1-\delta$.
Observe that Condition \ref{Condition: tail and edge} together with (\ref{bounding the first integral})-(\ref{bounding the third integral}) 
in the main manuscript assures that
$\overline{\mathcal{R}}(\Theta[s_{n}]\mid \{r_{i}\}) - \overline{\mathcal{C}}s_{n}\log(\eta_{n}^{-1}) =O_{P}(1)$.
Then, this together with (\ref{concentration}) yields
\begin{align*}
\overline{\mathcal{R}}(\Theta[s_{n}]\mid \{r_{i}\}) &\ge
\left(
	\mathbb{E}_G \overline{\mathcal{C}} - \sqrt{ \{1/(2n)\}\log(1 / \delta)} + b_n
\right)
s_n \log \eta_n^{-1} \text{ and }\\
\overline{\mathcal{R}}(\Theta[s_n] \mid \{r_{i}\})
&\le \left(
	\mathbb{E}_G \overline{\mathcal{C}} + \sqrt{ \{1/(2n)\}\log(2 / \delta)} + b_n
\right)
s_n \log \eta_n^{-1}
\end{align*}
with probability greater than $1-\delta$,
where $b_n$ is an $o_P(1)$ term that is independent of $\delta$. 
This shows (\ref{eq: probab}).

By substituting $\delta = \delta_n = \mathrm{e}^{-n / \log n}$ in (\ref{eq: probab})
and by using Lemma \ref{lemma: exact minimaxity within sparse Poisson models in inhomogeneous case},
we complete the proof of Proposition \ref{proposition: minimax risk in random design}. \qed

In the rest of this subsection,
we will present the supporting lemma for the proof of Lemma \ref{lemma: exact minimaxity within sparse Poisson models in inhomogeneous case} ahead,
and
then will provide the full proof for Lemma \ref{lemma: exact minimaxity within sparse Poisson models in inhomogeneous case}.

\subsubsection{Supporting lemma}

We provide the useful formula for the Kullback-Leibler risk for MCAR settings.
For $i = 1,2,\ldots,n$, 
let $t_i = t_i(\tau)$ $(i = 1,\cdots, n)$ be a smooth and monotonically increasing function of $\tau\in[0,1]$ 
such that $t_i(0) = r_i$ and $t_i(1) = 1 + r_i$. 
Using $t_i(\tau)$, 
let $Z_i(\tau)$ ($i=1,\ldots,n$) be a random variable independently distributed to Po$(t_i(\tau)\theta_i)$. 
The density of $Z(\tau) = (Z_1(\tau),\ldots,Z_n(\tau))$ is denoted by 
\[p(z \mid \theta;\tau) = \prod_{i=1}^{n} \left[ \frac{\exp\{- t_i(\tau)\theta_{i}\}\{t_i(\tau)\theta_{i}\}^{z_{i}}}{z_{i}!}\right].\]
By definition, $X$ and $Y$ follow the same distributions as those of $Z(0)$ and $Z(1) - Z(0)$, respectively.
For a prior $\Pi$ of $\theta$, 
let 
\[\hat{\theta}_{\Pi,i}(z;\tau) := \int \theta_{i} p(z \mid \theta;\tau) d\Pi(\theta) \bigg{/} \int p(z \mid \theta ; \tau) d\Pi(\theta), \ i=1,\ldots,n.
\]
For the Bayes estimator $\hat{\theta}_\Pi$ based on $\Pi$, 
let
\[R_\mathrm{e}(\theta, \hat{\theta}_\Pi;\tau) := \mathbb{E}_{\theta;\tau}
\left[ \sum_{i=1}^n\dot{t}_i(\tau) 
\left\{ \theta_i\log\frac{\theta_i}{\hat{\theta}_{\Pi,i}(Z;\tau)} - \theta_i + \hat{\theta}_{\Pi,i}(Z;\tau)  
\right\}
\right], \, \tau\in[0,1],
\]
where $\Ep_{\theta;\tau}$ is the expectation with respect to $p(\cdot \mid \theta ; \tau)$.
\begin{lemma}{\cite{Komaki(2015)}}
\label{lemma: Kullback--Leibler formula in inhomogeneous case}
For a prior $\Pi$ of $\theta$,
if $\hat{\theta}_{\Pi}(z;\tau)$ based on $\Pi$ is strictly larger than $0$ for any $z\in\mathbb{N}^{n}$ and any $\tau\in[0,1]$,
then, the equality
\begin{align*}
R(\theta, q_\Pi) = \int_0^1 R_\mathrm{e}(\theta, \hat{\theta}_\Pi;\tau)d\tau
\end{align*}
holds.
\end{lemma}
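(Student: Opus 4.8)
The plan is to follow the proof of Lemma~\ref{lemma: Kullback--Leibler formula} essentially verbatim, with the single scale parameter $t\in(r,r+1)$ replaced by the coordinatewise interpolation path $\tau\mapsto(t_{1}(\tau),\dots,t_{n}(\tau))$. First I would write, exactly as there, $R(\theta,q_{\Pi})=\Ep_{\theta}[\log\{s(Y,X\mid\theta)/s_{\Pi}(Y,X)\}]-\Ep_{\theta}[\log\{p(X\mid\theta)/p_{\Pi}(X)\}]$ with $s(y,x\mid\theta)=p(x\mid\theta)q(y\mid\theta)$, and then use sufficiency: since $X_{i}\sim\mathrm{Po}(r_{i}\theta_{i})$ and $Y_{i}\sim\mathrm{Po}(\theta_{i})$ are independent, $X_{i}+Y_{i}\sim\mathrm{Po}((r_{i}+1)\theta_{i})$ is sufficient for $\theta_{i}$, so the first term equals $\Ep_{\theta}[\log\{p(X+Y\mid\theta;1)/p_{\Pi}(X+Y;1)\}]$. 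Because $X\stackrel{d}{=}Z(0)$ and $X+Y\stackrel{d}{=}Z(1)$, this yields $R(\theta,q_{\Pi})=\Phi(1)-\Phi(0)$, where $\Phi(\tau):=\Ep[\log\{p(Z(\tau)\mid\theta;\tau)/p_{\Pi}(Z(\tau);\tau)\}]$ is the Kullback--Leibler divergence between $p(\cdot\mid\theta;\tau)$ and $p_{\Pi}(\cdot;\tau)$; hence $R(\theta,q_{\Pi})=\int_{0}^{1}\Phi'(\tau)\,d\tau$, and it suffices to prove $\Phi'(\tau)=R_{\mathrm e}(\theta,\hat{\theta}_{\Pi};\tau)$ for $\tau\in[0,1]$.

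To compute $\Phi'(\tau)$ I would differentiate the sum over $z\in\mathbb{N}^{n}$ defining $\Phi$. The direct dependence on $\tau$ through $\log p(Z(\tau)\mid\theta;\tau)$ contributes $\Ep[\partial_{\tau}\log p(Z(\tau)\mid\theta;\tau)]$, which cancels against $\sum_{z}\partial_{\tau}p(z\mid\theta;\tau)=0$; what remains is a cross term $\Ep[(\partial_{\tau}\log p(Z(\tau)\mid\theta;\tau))\log\{p(Z(\tau)\mid\theta;\tau)/p_{\Pi}(Z(\tau);\tau)\}]$ together with $-\Ep[\partial_{\tau}\log p_{\Pi}(Z(\tau);\tau)]$. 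Using $\partial_{\tau}\log p(z\mid\theta;\tau)=\sum_{i}\dot t_{i}(\tau)\{z_{i}/t_{i}(\tau)-\theta_{i}\}$ and the Poisson Stein identity $\Ep[W g(W)]=\mu\,\Ep[g(W+1)]$ for $W\sim\mathrm{Po}(\mu)$ applied coordinatewise, one gets the inhomogeneous analogue of Hudson's lemma, $\Ep[\{Z_{i}(\tau)/t_{i}(\tau)-\theta_{i}\}h(Z(\tau))]=\theta_{i}\{\Ep[h(Z(\tau)+e_{i})]-\Ep[h(Z(\tau))]\}$ for any $h$. Taking $h=\log\{p(\cdot\mid\theta;\tau)/p_{\Pi}(\cdot;\tau)\}$ and using the elementary ratios $p(z+e_{i}\mid\theta;\tau)/p(z\mid\theta;\tau)=t_{i}(\tau)\theta_{i}/(z_{i}+1)$ and $p_{\Pi}(z+e_{i};\tau)/p_{\Pi}(z;\tau)=t_{i}(\tau)\hat{\theta}_{\Pi,i}(z;\tau)/(z_{i}+1)$, the cross term collapses to $\sum_{i}\dot t_{i}(\tau)\theta_{i}\,\Ep[\log\{\theta_{i}/\hat{\theta}_{\Pi,i}(Z(\tau);\tau)\}]$. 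Similarly $\partial_{\tau}\log p_{\Pi}(z;\tau)=\sum_{i}\dot t_{i}(\tau)\{z_{i}/t_{i}(\tau)-\hat{\theta}_{\Pi,i}(z;\tau)\}$, so $-\Ep[\partial_{\tau}\log p_{\Pi}(Z(\tau);\tau)]=-\sum_{i}\dot t_{i}(\tau)\{\theta_{i}-\Ep[\hat{\theta}_{\Pi,i}(Z(\tau);\tau)]\}$, using $\Ep[Z_{i}(\tau)/t_{i}(\tau)]=\theta_{i}$. Adding the two pieces gives precisely $\Ep[\sum_{i}\dot t_{i}(\tau)\{\theta_{i}\log(\theta_{i}/\hat{\theta}_{\Pi,i}(Z(\tau);\tau))-\theta_{i}+\hat{\theta}_{\Pi,i}(Z(\tau);\tau)\}]=R_{\mathrm e}(\theta,\hat{\theta}_{\Pi};\tau)$, as required.

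The algebraic identities above are immediate once the coordinatewise Poisson Stein identity is in hand, so I expect the only genuine obstacle to be the justification of differentiating under the sum over $z$ and the finiteness of every term. This is exactly where the hypothesis $\hat{\theta}_{\Pi}(z;\tau)>0$ for all $z$ and all $\tau\in[0,1]$ is used: it keeps the logarithms finite, and, combined with the smoothness and strict monotonicity of each $t_{i}$ on the compact interval $[0,1]$ (so that $\inf_{\tau}t_{i}(\tau)=r_{i}>0$ and $\dot t_{i}$ is bounded), it supplies a $\tau$-uniform dominating function for the tails of the relevant series, after which dominated convergence legitimises the interchange. Everything else parallels the proof of Lemma~\ref{lemma: Kullback--Leibler formula} line by line, with $\int_{r}^{r+1}(\cdot)\,dt/t$ replaced by $\int_{0}^{1}(\cdot)\,d\tau$ and the scalar weight $1/t$ replaced by the vector of weights $\dot t_{i}(\tau)$.
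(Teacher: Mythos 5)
Your proof is correct. The paper itself imports Lemma~\ref{lemma: Kullback--Leibler formula in inhomogeneous case} from Komaki (2015) without proof, and your argument is a faithful coordinatewise generalization of the paper's own proof of Lemma~\ref{lemma: Kullback--Leibler formula}: the same decomposition of $R(\theta,q_{\Pi})$ as a difference of two Kullback--Leibler terms, the same sufficiency reduction to the interpolating Poisson variable $Z(\tau)$ with endpoints $Z(0)\stackrel{d}{=}X$ and $Z(1)\stackrel{d}{=}X+Y$, the same application of the Poisson Stein/Hudson identity to collapse the cross term into $\sum_{i}\dot t_{i}(\tau)\theta_{i}\Ep\log\{\theta_{i}/\hat{\theta}_{\Pi,i}\}$, and the same cancellation identities for $\partial_{\tau}\log p_{\Pi}$ involving the Bayes estimator.
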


\subsubsection{Proof of Lemma \ref{lemma: exact minimaxity within sparse Poisson models in inhomogeneous case}}
\label{subsubsection: proof of lemma exact minimaxity within sparse Poisson models in inhomogeneous case}

Hereafter, we suppress the dependence on $\{r_{i}\}$:
we denote by $R(\theta, \hat{q})$ the Kullback--Leibler risk conditioned on $r_{i}$s
and
denote by $\overline{R}(\Theta[s_{n}])$ the minimax Kullback--Leibler risk conditioned on $r_{i}$s.

\subsubsection*{Step 1: Lower bound on $\overline{\mathcal{R}}(\Theta[s_n])$
}

A lower bound on $\overline{\mathcal{R}}(\Theta[s_n])$ builds upon the Bayes risk maximization based on a \textit{varied-spike block-independent} prior.
Let \[\Pi_{\mathrm{VB}, \nu}\text{ with }\nu=(\nu^{(1)},\ldots,\nu^{(s_{n})})\in \mathbb{R}^{m_{n}} \times \mathbb{R}^{m_{n}} \times \cdots \times \mathbb{R}^{m_{n}} \times \mathbb{R}^{n-m_{n}s_{n}}\] be a \textit{varied-spike block-independent} prior built as follows: 
divide $\{1,2,\ldots,n\}$ into contiguous blocks $B_{j}$ ($j=1,2,\ldots,s_{n}$) with each length $m_{n}:=\lfloor \eta_{n}^{-1} \rfloor$.
In each block $B_{j}$,
draw $(\theta_{1+m_{n}(j-1)},\ldots,\theta_{m_{n}j})$ independently according to a single spike prior with spike strength parameter $\nu^{(j)}\in\mathbb{R}^{s_n}_+$,
where a single spike prior with spike strength parameter $\nu^{(j)}\in\mathbb{R}^{s_n}_+$ is the distribution of $\nu^{(j)}_I e_{I}$
with a uniformly random index $I\in\{1,\ldots,m_{n}\}$ and a unit length vector $e_{i}$ in the $i$-th coordinate direction.
Finally, set $\theta_{i}=0$ for the remaining $n-m_{n}s_{n}$ components. 
It is worth noting that a varied-spike block-independent prior may be different from a block-independent prior
since the spike strength may be varied in each coordinate. 
Hereafter, we use the block notation of a vector in $\mathbb{R}^{n}$:
For $v=(v_{1},v_{2},\ldots,v_{n})\in\mathbb{R}^{n}$,
for $j=1,\ldots,s_{n}-1$,
and for $k=1,\ldots,m_{n}$,
$v^{(j)}_{k}=v_{k+m_{n}(j-1)}$ 
and $v^{(j)}=(v^{(j)}_{1},\ldots,v^{(j)}_{m_{n}})$.

We first get the explicit expression of $\hat{\theta}_{\Pi_{\mathrm{VB},\nu}}=(\hat{\theta}_{\Pi_{\mathrm{VB}}}^{(1)},\ldots,\hat{\theta}_{\Pi_{\mathrm{VB}}}^{(s_{n})})$.
From the Bayes formula, we have,
for $j=1,\ldots,s_{n}-1$ and for $k=1,\ldots,m_{n}$,
\begin{align}
\hat{\theta}_{\Pi_{\mathrm{VB},\nu}, k}^{(j)}(x^{(j)})=
\begin{cases} 
w_k^{(j)}\nu_k^{(j)}& \text{if $\|x^{(j)}\|_{0}=0$,} \\ 
\nu_k^{(j)} & \text{if $x^{(j)}_{k}\neq 0$ and $x^{(j)}_{l}=0$ for $l\neq k$,} \\
0   & \text{otherwise,}
\end{cases}
\label{eq: expression of block estimator}
\end{align}
where $w_k^{(j)} := \exp(-r_k^{(j)}\nu_k^{(j)}) / \sum_{l=1}^{m_n}\exp(-r_l^{(j)}\nu_l^{(j)})$.

By the coordinate-wise additive property of the Kullback-Leibler divergence,
we decompose $R_\mathrm{e}(\theta, \hat{\theta}_{\Pi_{\mathrm{VB},\nu}};\tau)$ as
\[
R_\mathrm{e}(\theta, \hat{\theta}_{\Pi_{\mathrm{VB},\nu}};\tau)
=
\sum_{j=1}^{s_n -1 } 
R^{(j)}(\theta^{(j)};\tau) + R^{(s_{n})}( 0 ;\tau) ,\]
where, for $j=1,\ldots,s_{n}$,
\begin{align*}
&R^{(j)}(\theta^{(j)};\tau)\\
&:=
\Ep_{\theta;\tau}
 \sum_{k=1}^{m_n} 
\dot{t}^{(j)}_k(\tau) 
\bigg{[}
\theta^{(j)}_{k}\log\{\theta^{(j)}_k /\hat{\theta}^{(j)}_{\Pi_{\mathrm{VB},\nu},k}(Z^{(j)};\tau)\} 
- \theta^{(j)}_k 
+ \hat{\theta}^{(j)}_{\Pi_{\mathrm{VB},\nu},k}(Z^{(j)};\tau)  
\bigg{]}
\end{align*}
and, for $j=s_{n}+1$,
\begin{align*}
&R^{(s_{n})}(\theta^{(s_{n})};\tau)\\
&:=
\Ep_{\theta;\tau}
 \sum_{k=1}^{n-m_{n}s_{n}} 
\dot{t}^{(j)}_k(\tau) 
\bigg{[}
\theta^{(j)}_{k}\log\bigg{\{}
\frac{\theta^{(j)}_k}{ \hat{\theta}^{(j)}_{\Pi_{\mathrm{VB},\nu},k}(Z^{(j)};\tau)}\bigg{\}}
- \theta^{(j)}_k 
+ \hat{\theta}^{(j)}_{\Pi_{\mathrm{VB},\nu},k}(Z^{(j)};\tau)  
\bigg{]}.
\end{align*}
Here $\dot{f}(\tau)$ denotes the derivative of $f$ with respect to $\tau$.

Fix $\theta^{(j)}$ to be the $j$-th block of $\theta$ in the support of $\Pi_{\mathrm{VB},\nu}$
and consider $R^{(j)}(\theta^{(j)};\tau)$.
For notational brevity, we omit $\tau$ in $t_i(\tau)$'s.
From (\ref{eq: expression of block estimator}),
we have
\begin{align*}
&R^{(j)}(\theta^{(j)};\tau)\\
&= \Ep_{\theta;\tau}
	\sum_{k=1}^{m_n}  \dot{t}_k^{(j)}
	\left[ \theta_k^{(j)}\log\{\theta_k^{(j)} / 			\hat{\theta}_{\Pi_{\mathrm{VB},\nu},k}^{(j)}(Z^{(j)};\tau)\} 
		- \theta_k^{(j)} 
		+ \hat{\theta}_{\Pi_{\mathrm{VB},\nu},k}^{(j)}(Z^{(j)};\tau)  
	\right] \\
&= \mathrm{e}^{-t^{(j)}_{\gamma} \nu^{(j)}_{\gamma}} 
	\left\{
    	\dot{t}_{\gamma}^{(j)} \nu_{\gamma}^{(j)}\log \frac{1}{w^{(j)}_{\gamma}} 
		-\dot{t}^{(j)}_{\gamma}\nu^{(j)}_{\gamma} 
        + \sum_{k = 1}^{m_n}\dot{t}_{k}^{(j)} w_{k}^{(j)}\nu_{k}^{(j)}
	\right\},
\end{align*}
where we denote by $\gamma=\gamma(j)$ the location in which the element is a spike.
Taking the expectation with respect to $\Pi_{\mathrm{VB},\nu}$ yields
\begin{align*}
&\int R^{(j)}(\theta^{(j)};\tau) d\Pi_{\mathrm{VB},\nu}(\theta)\\
&= \frac{1}{m_n}\sum_{k=1}^{m_n}
	\mathrm{e}^{-t_{k}^{(j)}\nu_{k}^{(j)}}
	\left(
    	\dot{t}_{k}^{(j)} \nu_{k}^{(j)}\log \frac{1}{w_{k}^{(j)}} 
		-\dot{t}_{k}^{(j)}\nu_{k}^{(j)} 
        + \sum_{l = 1}^{m_n}\dot{t}_{l}^{(j)} w_{l}^{(j)}\nu_{l}^{(j)}
	\right) \\
&\geq \frac{1}{m_n}\sum_{k=1}^{m_n}
	\mathrm{e}^{-t_{k}^{(j)} \nu_{k}^{(j)}} 
	\left\{
    	\dot{t}_{k}^{(j)} \nu_{k}^{(j)}\log (1 / w_{k}^{(j)}) 
		-\dot{t}_{k}^{(j)}\nu_{k}^{(j)} 
	\right\} .
\end{align*}
Integrating the both hand sides of the above equality with respect to $\tau$ over $[0,1]$, 
we have
\begin{align*}
\int_0^1 
	&\left[\int R^{(j)}(\theta^{(j)};\tau) 
	d\Pi_{\mathrm{VB},\nu} (\theta)\right]d\tau \\
&\geq \frac{1}{m_n} \sum_{k=1}^{m_n} 
	\bigg{\{}
    	f_{k}^{(j)}(\nu_{k}^{(j)})\log \bigg{(}\frac{1}{w_{k}^{(j)}}\bigg{)}
		-  f_{k}^{(j)}(\nu_{k}^{(j)})
    \bigg{\}},
\end{align*}
where $f_{k}^{(j)}(\lambda) := \exp\{-r_{k}^{(j)} \lambda\} - \exp\{-(1 + r_{k}^{(j)})\lambda\}$, $\lambda>0$.
By summing up the block-wise risk evaluation, we have the following lower bound on the overall Bayes risk of $\Pi_{\mathrm{VB},\nu}$:
\begin{align}
\overline{\mathcal{R}}(\Theta[s_n])
& \geq \sum_{j=1}^{s_n} \int_0^1 
	\left[
    	\int R^{(j)} (\theta^{(j)} ;\tau) 
        d\Pi_{\mathrm{VB},\nu} (\theta)
    \right]
    d\tau \nonumber \\
&\geq \frac{1}{m_n} \sum_{j=1}^{s_n}\sum_{k=1}^{m_n} 
	f^{(j)}_k(\nu^{(j)}_k)\log \frac{1}{w^{(j)}_k} 
    - \frac{1}{m_n} \sum_{j = 1}^{s_n} \sum_{k = 1}^{m_n}
    f^{(j)}_k(\nu^{(j)}_{k}) \nonumber\\
&= \frac{1}{m_n} \sum_{j=1}^{s_n}\sum_{k=1}^{m_n} 
	f^{(j)}_k(\nu^{(j)}_k)\log \frac{1}{w^{(j)}_k} 
	- \frac{1}{m_n} \sum_{i = 1}^{n} f_i(\nu_{i}),
\label{lower}\end{align}
where $f_{i}(\lambda) := \exp\{-r_{i}\lambda\} - \exp\{ -(1+r_{i})\lambda \}$, $\lambda>0$.

We next show that the asymptotic inequality
\begin{equation}
\overline{\mathcal{R}}(\Theta[s_n]) 
\geq  \overline{f}(\nu)
	\{s_n \log (\eta_{n}^{-1})\} (1 + o(1))
\label{lowerbound}
\end{equation}
holds with $\overline{f}(\nu) := \sum_{i=1}^{n} f_{i}(\nu_{i}) / n$ ,  provided that the following condition holds for $\nu$:
\begin{condition}
\label{condition: homogeneity in some sense}
There exists a positive constant $C$ 
such that 
$\max_{l} r^{(j)}_l\nu^{(j)}_l \leq C$ 
for any $j = 1,\ldots,s_n.$
\end{condition}
For $\nu$ satisfying Condition \ref{condition: homogeneity in some sense},
the first term of (\ref{lower}) is rewritten as
\begin{align*}
\frac{1}{m_n} \sum_{j=1}^{s_n}\sum_{k=1}^{m_n} 
	f^{(j)}_i(\nu^{(j)}_i)\log\frac{1}{w^{(j)}_k} 
&= \frac{1}{m_n} \sum_{j=1}^{s_n}\sum_{k=1}^{m_n} 
	f^{(j)}_i(\nu^{(j)}_i)\left( \log m_n + \log\frac{1}{m_n w^{(j)}_k}\right) \\
&= \overline{f}(\nu) s_{n}\{\log (\eta_{n}^{-1})\} (1 + o(1)),
\end{align*}
because by definition of $w^{(j)}_k$
we have,
for any $k=1,...,m_n$ and $j=1,...,s_n$,
\begin{align*}
\exp\left\{- \max_{l=1,...,m_n} r^{(j)}_l\nu^{(j)}_l 
	+ r^{(j)}_k\nu^{(j)}_k\right\}
&\leq \frac{1}{m_nw^{(j)}_k}\\
&\leq \exp\left\{- \min_{l=1,...,m_n} r^{(j)}_l\nu^{(j)}_l
 	+ r^{(j)}_k\nu^{(j)}_k\right\}.
\end{align*}
For $\nu$ satisfying Condition \ref{condition: homogeneity in some sense},
the second term of (\ref{lower}) is negligible compared to the first term.

Since Condition \ref{condition: homogeneity in some sense}
holds for $\nu^{\circ}$ that maximizes the right hand side of (\ref{lowerbound}), 
that is, $\nu^{\circ}_i = \log (1 + 1 / r_i)$ $(i = 1,\ldots,n)$,
we obtain the desired lower bound by substituting $\nu=\nu^{\circ}$,
which completes Step 1.

\subsubsection*{Step 2: Upper bound on $\overline{\mathcal{R}}(\Theta[s_n])$ 
}
We derive an upper bound on $\overline{\mathcal{R}}(\Theta[s_n])$.
For simplicity, 
we only show that
\[
\sup_{\theta\in\Theta[s_{n}]} R(\theta, q_{\Pi[\eta_{n},1]}) \sim \overline{C}s_{n} \log \eta_{n}^{-1},
\]
and omit the proof for general $\kappa \in (0,1]$.
Let $\Pi^{*}=\Pi[\eta_{n},1]$.
Fix $i=1,2,\ldots,n$. For $\lambda_i > 0$, 
let
\begin{align*}
\rho_i(\lambda_i) :=
\Ep_{\lambda_{i}} \log [ \{\exp(-\lambda_i)\lambda_i^{Y_{1}}/Y_{1}!\}/\{q_{\Pi^{*},i}(Y_{1}\mid X_{1})\} ] .
\end{align*} 
For $\lambda_i > 0$ and $\tau\in[0,1]$, 
let 
\begin{align*}
\hat{\rho}_i(\lambda_i,z_{1};\tau) 
:=  \dot{t}_i(\tau)
	\left[
    	\lambda \log \{\lambda / \hat{\theta}_{\Pi^{*},1}(z_{1};\tau)\} 
		- \lambda_i 
        + \hat{\theta}_{\Pi^{*},1}(z_{1};\tau)
	\right].
\end{align*}

Observe that 
we have, for $\tau\in[0,1]$,
\begin{align*}
    \hat{\theta}_{\Pi^{*},i}(z_{i};\tau) &= (\eta_{n} /t_{i}(\tau)^{2}) \big{/} (1+\eta_{n}/t_{i}(\tau)^{1}), &z_{i}=0,\\
    \hat{\theta}_{\Pi^{*},i}(z_{i};\tau) &= (z_{i}+1)/ t_{i}(\tau), &z_{i}\geq 1.
\end{align*}
This gives
\begin{align*}
    \hat{\rho}_{i}(\lambda_{i},z_{i};\tau) &\leq 
    \dot{t}_i(\tau) \{\lambda_i \log \eta_n^{-1} 
	+  \lambda_i \log \lambda_i 
    -  \lambda_i 
    +  \lambda_i \log C_{1} 
    +  \eta_{n}/t_{i}(\tau)^{2} \}, 
    & z_{1}=0, \\
    \hat{\rho}_i(\lambda_i,z_{1};\tau) 
&\leq \dot{t}_i(\tau) \{\lambda_i \log \{ \dot{t}_{i}(\tau)\lambda_i /(z_{i}+1)\}
	- \lambda_i
    + (z_{i}+1) / t_i(\tau) \},
    & z_{i}\geq 1,
\end{align*}
where $C_{1}=\sup_{i}(r_{i}+2)^{2}$.

By the same way as in Subsection \ref{subsection: Proof of theorem exact minimaxity within sparse Poisson models}, we have,
for sufficiently large $n\in\mathbb{N}$ and for $\lambda_i > 0$,
\begin{equation}
\begin{split}
\rho_i(\lambda_i) 
\leq \int_0^1 
	\Bigg[
    	\mathrm{e}^{-t_i(\tau) \lambda_i} \dot{t}_i(\tau) \lambda_i \log \eta_n^{-1}
		&+ \mathrm{e}^{-t_i(\tau)\lambda_i} 
        	\dot{t}_i(\tau)\lambda_i
       		\log(\lambda_i C_{1}) \\
&
        + \frac{\dot{t}_i(\tau)}{t_i(\tau)}(1 - \mathrm{e}^{-t_i(\tau)\lambda_i})
 	\Bigg]
 	d\tau  + \eta_nC_2,
\end{split}
 \label{upper}
\end{equation}
where $C_{2}:=\sup_{i}\{1/r_{i}-1/(r_{i}+1)\}$.
Inequality (\ref{upper}) yields
\[\sup_{\lambda > 0}\rho_i(\lambda) 
\leq (\mathcal{C}_{i} + o(1)) \log \eta_n^{-1}\] 
and a similar procedure yields the inequality
$\rho_i(0) = O(\eta_n)$.

Finally, we derive an upper bound on $\overline{\mathcal{R}}(\Theta[s_{n}])$ by combining asymptotic inequalities
$\sup_{\lambda > 0}\rho_i(\lambda) 
\leq (\mathcal{C}_{i} + o(1)) \log \eta_n^{-1}$
and
$\rho_i(0) = O(\eta_n)$ for $i=1,\ldots,n$.
Let subscripts $[1],\ldots,[n]$ be the permutation of $1,\ldots,n$ that satisfies $\mathcal{C}_{[1]}\geq \ldots \geq \mathcal{C}_{[n]}$. 
The minimax risk $\overline{\mathcal{R}}(\Theta[s_n])$ is bounded as 
\begin{align}
\overline{\mathcal{R}}(\Theta[s_n]) 
&\leq \sup_{\theta\in\Theta[s_n]} R(\theta, q_{\Pi^{*}}) 
= \sum_{i = 1}^{s_n} \sup_{\lambda_>0}\rho_{[i]}(\lambda) 
	+ \sum_{i = s_n + 1}^n \rho_{[i]}(0).
\label{minimax risk with IM}
\end{align}
Together with Condition \ref{condition: condition on ris},
the asymptotic inequalities
\[
\sup_{\lambda_i > 0}\rho_i(\lambda_i)\leq (\mathcal{C}_{i}+o(1))\log \eta_{n}^{-1}
\text{ and }
\rho_i(0)=O(\eta_{n}) \ (i=1,\ldots,n)
\]
give
\begin{align*}
\overline{\mathcal{R}}(\Theta[s_n]) 
&\leq \sum_{i = 1}^{s_n} \{\mathcal{C}_{[i]} + o(1)\} \log \eta_n^{-1} 
	+ (n - s_n) O(\eta_n) \\ 
&\sim \frac{s_n}{n} \sum_{i = 1}^{n} \mathcal{C}_{i} \{\log \eta_n^{-1}\} (1 + o(1)) 
\sim \bar{\mathcal{C}} s_n \log \eta_n^{-1},
\end{align*}
from which we obtain the desired upper bound on $\overline{\mathcal{R}}(\Theta[s_n])$. \qed

\subsection{Proof of Proposition \ref{proposition: adaptive in random design}}
\label{subsection: proof of adaptive in random design}

Almost the same lemma as Lemma \ref{lemma: properties of hats_n}
holds for MCAR settings
by
replacing $\mathrm{e}^{-r\theta_{[j]}}$ in (\ref{proof bias}) in \cite{YKK} and (\ref{proof variance}) in \cite{YKK} with $\mathrm{e}^{-r_{[j]}\theta_{[j]}}$.
Thus following the same line as in Subsection \ref{subsection: proof of theorem adaptive} completes the proof.
\qed

\section{Supplemental experiments}
\label{Appendix: supplemental experiments}

This section presents supplemental experiments.

\subsection{Quasi-sparsity}

This subsection provides simulation studies for quasi-sparsity.
Parameter $\theta$ and observations $X$ and $Y$ are drawn from
\begin{align*}
\theta_{i} &\sim \nu_{i} e_{S,i} + {\xi}_{i} e_{S^{\mathrm{c}}, i} \ (i=1,\dots,n),\\
 X \mid \theta &\sim \otimes_{i=1}^{n}\mathrm{Po}(r\theta_{i}),
 Y \mid \theta \sim \otimes_{i=1}^{n}\mathrm{Po}(\theta_{i}),
 \text{ and }
 X \indep Y \mid \theta,
\end{align*}
respectively, where 
\begin{itemize}
    \item $\nu_{1},\ldots,\nu_{n}$ are independently drawn from the gamma distribution 
with a shape parameter $10$ and a scale parameter $1$;
\item $\xi_1,\ldots, \xi_n$ are independently and uniformly drawn from $[0,10^{-2}]$;
\item $S$ is drawn from the uniform distribution on all subsets having exactly $s$ and $S^{\mathrm{c}}$ is its complement;
\item $\nu_{1},\ldots,\nu_{n}$ and $S$ are independent.
\end{itemize}
Here for a subset $J\subset \{1,\ldots,n\}$, $e_{J}$ indicates the vector
of which the $i$-th component is 1 if $i\in J$ and 0 if otherwise.
We examine two cases $(n, s ,r) = (200, 5, 20)$ and $(n,s,r)=(200,20,20)$,
and generate 500 current observations $X$'s
and 500 future observations $Y$'s.

\begin{table}[h]
\caption{Comparison of predictive densities with $(n,s,r)=(200,5,20)$ and with quasi-sparsity:
for each result, the averaged value is followed by the corresponding standard deviation. Underlines indicate the best performance.
The same abbreviations as in Table \ref{table: simulation under exact sparsity  in homogeneity, (n,s,r)=(200,5,1)} are used.
}
\label{table: simulation under quasi sparsity 1 in homogeneity}
\centering
\begin{tabular}{|c||c|c|c|c|c|c|} \hline
      &$\Pi[L^{*}\hat{\eta}_{n},0.1]$&
      $\Pi[L^{*}\hat{\eta}_{n},1.0]$&
      GH & K04 & $\ell_{1}$ ($\lambda=0.1$)
      \\ \hline\hline
      $\ell_{1}$ distance & 
      13.8 (4.0) &
      \underline{13.6} (4.0)&
      18.1 (1.7) 
      & 63.3 (8.3) & 17.6 (4.9) 
      \\
      PLL & 
      -19.0 (5.3) &
      \underline{-18.8} (5.3)&
      -20.8 (4.3) &
      -43.8 (4.9) & -Inf
      \\
90\%CP (\%) &
      90.7 (2.8) &
      \underline{90.1} (2.9) &
      45.6 (18.9)
      & 43.5 (15.0) & 85.5 (4.2) 
      \\
      \hline
\end{tabular}
\end{table}
\begin{table}[h]
\caption{Comparison of predictive densities with $(n,s,r)=(200,20,20)$ and with quasi-sparsity:
for each result, the averaged value is followed by the corresponding standard deviation. Underlines indicate the best performance.
}
\label{table: simulation under quasi sparsity 2 in homogeneity}
\centering
\begin{tabular}{|c||c|c|c|c|c|c|} \hline
      & $\Pi[L^{*}\hat{\eta}_{n},0.1]$ &
      $\Pi[L^{*}\hat{\eta}_{n},1.0]$ &
      GH & K04 & $\ell_{1}$ ($\lambda=0.1$)
      \\ \hline\hline
      $\ell_{1}$ distance &
      49.0 (8.5) &
      \underline{48.8} (8.6)&
      50.3 (3.0) 
      & 206 (15) & 57.7 (9.1)
      \\
      PLL & 
      -52.3 (5.6) &
      \underline{-52.2} (5.7)&
      -56.7 (4.5) &
      -121 (8.8) & -Inf
      \\
      90\%CP (\%)& 
      \underline{89.8} (2.9) &
      89.5 (3.1)&
      50.3 (3.1)
      & 0.0 (0.0) & 82.8 (4.3) 
      \\
      \hline
\end{tabular}
\end{table}

Tables \ref{table: simulation under quasi sparsity 1 in homogeneity} and \ref{table: simulation under quasi sparsity 2 in homogeneity} show that
the performance of the proposed predictive densities does not depend on whether a parameter is exact sparse or quasi-sparse.

\subsection{Effect of $s$}

This subsection provides  simulation studies highlighting the effect of $s$. 
The set-up except for $s$ is the same as that in Subsection \ref{subsection: simulation studies}.

\begin{table}[h!]
\caption{Comparison of predictive densities with $(n,s,r)=(200,50,20)$ with exact sparsity:
for each result, the averaged value is followed by the corresponding standard deviation. Underlines indicate the best performance.
}
\label{table: simulation under exact sparsity  in homogeneity, s=50}

\centering
\begin{tabular}{|c||c|c|c|c|c|c|} \hline
      & $\Pi[L^{*}\hat{\eta}_{n},0.1]$ &
      $\Pi[L^{*}\hat{\eta}_{n},1.0]$ &
      GH & K04 & $\ell_{1}$ ($\lambda=0.1$)
      \\ \hline\hline
      $\ell_{1}$ distance &  
      \underline{106.0} (11.5)&
      106.4 ( 11.6)&
      136.3 (4.76) 
      & 117.1 (12.4) & 118.0 (13.0)
      \\
      PLL & 
      \underline{-110.2} (4.5)&
      110.4 (4.5)&
      -115.2 (4.6) &
      -117.8 (4.3) & -Inf
      \\
      90\%CP (\%)&
      89.0 (2.8)&
      \underline{89.8} (2.7)&
      92.1 (1.4)
      & 95.4 (2.4) & 79.4 (5.4) 
      \\
      \hline
\end{tabular}
\end{table}

\begin{table}[h!]
\caption{Comparison of predictive densities with $(n,s,r)=(200,100,20)$ and with exact sparsity:
for each result, the averaged value is followed by the corresponding standard deviation. Underlines indicate the best performance.
}
\label{table: simulation under exact sparsity  in homogeneity, s=100}

\centering
\begin{tabular}{|c||c|c|c|c|c|c|} \hline
      & $\Pi[L^{*}\hat{\eta}_{n},0.1]$ &
      $\Pi[L^{*}\hat{\eta}_{n},1.0]$ &
      GH & K04 & $\ell_{1}$ ($\lambda=0.1$)
      \\ \hline\hline
      $\ell_{1}$ distance &
      208.0 (18.6)&
      208.4 (18.7)&
      \underline{190.3} (6.0) 
      & 209.6 (12.4) & 221.3 (17.8)
      \\
      PLL & 
      -204.4 (6.7)&
      -204.8 (6.7)&
      \underline{-191.9} (1.9) &
      -207.4 (6.1) & -Inf
      \\
      90\%CP (\%)&
      88.9 (2.8)&
      \underline{89.7} (2.8)&
      100.0 (0.0)
      & 92.9 (2.2) & 72.1 (5.6) 
      \\
      \hline
\end{tabular}
\end{table}

Tables \ref{table: simulation under exact sparsity  in homogeneity, s=50} and \ref{table: simulation under exact sparsity  in homogeneity, s=100} find
that
the Bayes predictive density based on a Gauss hypergeometric prior
works better in the predictive likelihood sense as the sparsity $s_{n}$ is relatively large.

\subsubsection{Sparse Poisson model with Missing-Completely-At-Random settings}
\label{subsubsection: simulation study with sample-size heterogeneity}
This subsection presents simulation studies for MCAR settings.
Consider a sparse Poisson model with MCAR.
Parameter $\theta$ and observations $X$ and $Y$ are 
drawn in the following way:
\begin{align*}
\theta_{i} &\sim \nu_{i} e_{S,i} \ (i=1,\ldots, n), \\
X\mid \theta &\sim \otimes_{i = 1}^{n} \mathrm{Po}(r_i\theta_i), \
Y\mid \theta \sim \otimes_{i = 1}^n \mathrm{Po}(\theta_i), \
\text{and }
X \indep Y \mid \theta,
\end{align*}
where 
$\nu_{1},\ldots,\nu_{n}$ and $S$ follow the same distributions as those in the previous subsection.
In addition, $r_i-1$ ($i = 1,\ldots, n$) are independently drawn from the binomial distribution $\mathrm{Bi}(m, p)$ with the parameters $(m, p)$ either $(1, 0.9)$ or $(10, 0.9)$.
We set $(n,s) = (200, 5)$,
and
generate 500 current observations $X$'s and 500 future observations $Y$'s.

We compare the following four predictive densities:
\begin{itemize}
    \item The proposed predictive density based on $\Pi[\overline{L}\hat{\eta}_{n},\kappa]$ with $\overline{L}$ in Remark \ref{remark: optimal choice};
    \item The Bayes predictive density based on the shrinkage prior proposed in \cite{Komaki(2015)};
    \item The Bayes predictive density based on the Gauss hypergeometric prior proposed in \cite{DattaandDunson(2016)};
    \item The plug-in predictive density based on an $\ell_1$-penalized estimator.
\end{itemize}
An estimator $\hat{s}_{n}$ is determined in the same manner as in the previous subsection.
In \cite{Komaki(2015)},
the second predictive density is shown to dominate the Bayes predictive density based on the Jeffreys prior in the case where the numbers of observations are coordinate-wise different. 
In simulation studies, each hyper parameter $\beta_i$ of the second predictive density is fixed to be 1.

In comparing the performance,
we use the weighted $\ell_{1}$ distance between the mean of the predictive density and a future observation.
Here the the weighted $\ell_{1}$ distance between $u,v\in \mathbb{R}^n$ is given by $ \sum_{i=1}^{n}r_{i}|u_{i}-v_{i}| / (\sum_{i=1}^{n} r_{i}/n)$. 
For the construction of prediction sets, we also use this weighted $\ell_{1}$ distance.

\begin{table}[h]
\caption{
Comparison of predictive densities with MCAR and with $(n,s,m, p)=(200,5,1, 0.9)$:
the W-$\ell_1$ distance, PLL, and $90\%$CP represent the weighted mean $\ell_1$ distance, the predictive log likelihood, and the empirical coverage probability based on a $90\%$-prediction set, respectively.
For each result, the averaged value is followed by the corresponding standard deviation. 
Underlines indicate the best performance.
}
\label{table: simulation under exact sparsity in hetarogeneity, (n,s,m, p)=(200,5,1,0.9)}

\centering
\begin{tabular}{|c||c|c|c|c|c|c|} \hline
      &$\Pi[\overline{L}\hat{\eta}_{n},0.1]$
      &$\Pi[\overline{L}\hat{\eta}_{n},1.0]$&
      GH &K15& $\ell_{1}$ ($\lambda=0.1$)
      \\ \hline\hline
      $\text{W-}\ell_{1}$ distance &
      \underline{17.1} (5.6)& 26.8 (7.3) &
      48.3 (5.6) & 25.6 (8.7) & \underline{17.1} (5.8)
       \\
      PLL & 
      \underline{-14.7} (2.2)&
      -16.8 (2.7)&
      -42.3 (2.0) &-18.2 (3.4)& -Inf
      \\
      90\%CP (\%) &
      \underline{91.3} (0.1)&
      71.8 (0.2) &
      100 (0.0) &61.7 (20.9)& 68.6 (13.6)
      \\
      \hline
\end{tabular}
\end{table}

 \begin{table}[h]
\caption{Comparison of predictive densities with MCAR and with $(n,s,m, p)=(200,5,10, 0.9)$:
the W-$\ell_1$ distance, PLL, and $90\%$CP represent the weighted mean $\ell_1$ distance, the predictive log likelihood, and the empirical coverage probability based on a $90\%$-prediction set, respectively.
For each result, the averaged value is followed by the corresponding standard deviation. 
Underlines indicate the best performance.
}
\label{table: simulation under exact sparsity in heterogeneity, (n,s,m, p)=(200,5,10,0.9)}

\centering
\begin{tabular}{|c||c|c|c|c|c|c|} \hline
      &$\Pi[\overline{L}\hat{\eta}_{n},0.1]$
      &$\Pi[\overline{L}\hat{\eta}_{n},1.0]$&
      GH & K15  &　$\ell_{1}$ ($\lambda=0.1$)
      \\ \hline\hline
      $\text{W-}\ell_{1}$ distance &
      \underline{12.3} (4.2)&
      12.4 (4.2)&
      17.3 (1.8) &15.2 (4.2)& 13.0 (4.4)
       \\
      PLL & 
      \underline{-12.2} (1.7)&
      \underline{-12.2} (1.7)&
      -19.3 (1.7)& -14.0 (1.9)& -Inf
      \\
      90\%CP (\%) &
      88.3 (4.2)&
      \underline{88.5} (3.9)&
      51.7 (8.6) &78.3 (7.3)& 84.8 (4.6)
      \\
      \hline
\end{tabular}
\end{table}

Tables \ref{table: simulation under exact sparsity in hetarogeneity, (n,s,m, p)=(200,5,1,0.9)} and \ref{table: simulation under exact sparsity in heterogeneity, (n,s,m, p)=(200,5,10,0.9)}
show the results.
In addition to the abbreviations used in the previous subsection, the abbreviation W-$\ell_{1}$ distance represents a mean weighted $\ell_{1}$ distance.
We see that the weighted $\ell_1$ distances by the proposed predictive density based on $\Pi[\overline{L}\hat{\eta}_{n},0.1]$
and the plug-in predictive density based on the $\ell_1$-penalized estimator
are in the smallest level of all predictive densities compared here.
As is the case without MCAR, 
the proposed predictive density with $\Pi[\overline{L}\hat{\eta}_{n},0.1]$ broadly has the coverage probability that is relatively close to the nominal level, 
whereas the other predictive densities (with one exception) do not.

\subsection{Comparison to the extremely heavy tailed prior}

This subsection provides an additional comparison to 
the predictive density based on
the extremely heavy tailed prior distribution in \cite{Hamuraetal}.
We denote by EH the prior in \cite{Hamuraetal} and use the same settings as in Tables 1 and 2 of the main manuscript.
In summary, we found EH is a very good competitor and a thorough comparison would offer an interesting insight.

\begin{table}[h]
\caption{Comparison under the same setting as in Table 1 in the main manuscript. For each result, the averaged value is followed by the corresponding standard deviation. 
Underlines indicate the best performance.}
\begin{center}
\begin{tabular}{|c||c|c|c|c|c|c|}
\hline
 & $\Pi[L^{*}\hat{\eta}_{n},0.1]$ & $\Pi[L^{*}\hat{\eta}_{n},1.0]$ & GH & K04 & $\ell_{1}$ &EH \\
 \hline \hline
 $\ell_{1}$ distance & \underline{18.8} (5.8) & 21.9 (6.8) & 104 (4.9) & 96.5 (8.1) & 22.1 (7.8) & 21.3 (6.0) \\
 PLL & \underline{-15.4} (1.8) & -16.1 (1.6) & -66.3 (3.3) & -86.2 (8.8) & -Inf & -16.6 (1.2) \\
 90$\%$CP ($\%$) & 92.6 (0.1) & 95.8 (0.1) & \underline{92.0} (1.5) & 40.5 (24.4) & 49.4 (21.6) & 92.7 (0.1) \\\hline
 \end{tabular}
\end{center}
\end{table}

\begin{table}[h!]
\caption{Comparison under the same setting as in Table 2 in \cite{YKK}. For each result, the averaged value is followed by the corresponding standard deviation. 
Underlines indicate the best performance.}
\begin{center}
\begin{tabular}{|c||c|c|c|c|c|c|}
\hline
 & $\Pi[L^{*}\hat{\eta}_{n},0.1]$ & $\Pi[L^{*}\hat{\eta}_{n},1.0]$ & GH & K04 & $\ell_{1}$ &EH \\
 \hline \hline
 $\ell_{1}$ distance & \underline{14.0} (4.9) & 14.5 (4.5) & 15.7 (1.7) & 22.5 (5.2) & 14.1 (4.5) & 17.6 (1.6) \\
 PLL & -13.3 (1.6) & -13.5 (1.5) & -15.6 (1.5) & -21.6 (2.2) & -Inf & \underline{-12.23} (1.4) \\
 90$\%$CP ($\%$) & \underline{90.0} (0.0) & 89.4 (0.0) & 97.6 (0.7) & 97.5 (1.4) & 86.3 (3.9) & 91.8 (0.0) \\\hline
 \end{tabular}
\end{center}
\end{table}

\vspace{5mm}

\section{Discussions for Subsection \ref{subsection: MCAR}}
\label{subsection: Discussion: MCAR}

In this subsection,
we compare Proposition \ref{proposition: minimax risk in random design}
with Theorem \ref{theorem: exact minimaxity within sparse Poisson models}:
\begin{itemize}
\item[(A)] $G$ is the Gamma distribution with shape parameter $r/l$ and scale parameter $l$ for $0<l\leq 1$ and $r\geq 2$;
\item[(B)] $G$ is the distribution of $1+S$, where $S$ follows the binomial distribution with trial number $N$ and success probability $p$.
\end{itemize}
In Setting (A), the mean remains $r$ for any $l$ and the variance is $rl$,
which means that as $l\to0$, $G$ is weakly convergent to the Dirac measure $\delta_{r}$ centered at $r$ corresponding to a non-MCAR case.
In Setting (B), $G$ is weakly convergent to the Dirac measure $\delta_{1}$ centered at $1$ as $p\to 0$,
and $G$ is weakly convergent to the Dirac measure $\delta_{1+N}$ as $p\to1$.

\begin{figure}[ht]
\begin{minipage}{0.49\hsize}
\hspace{-0.4cm}
\includegraphics[width=6cm]{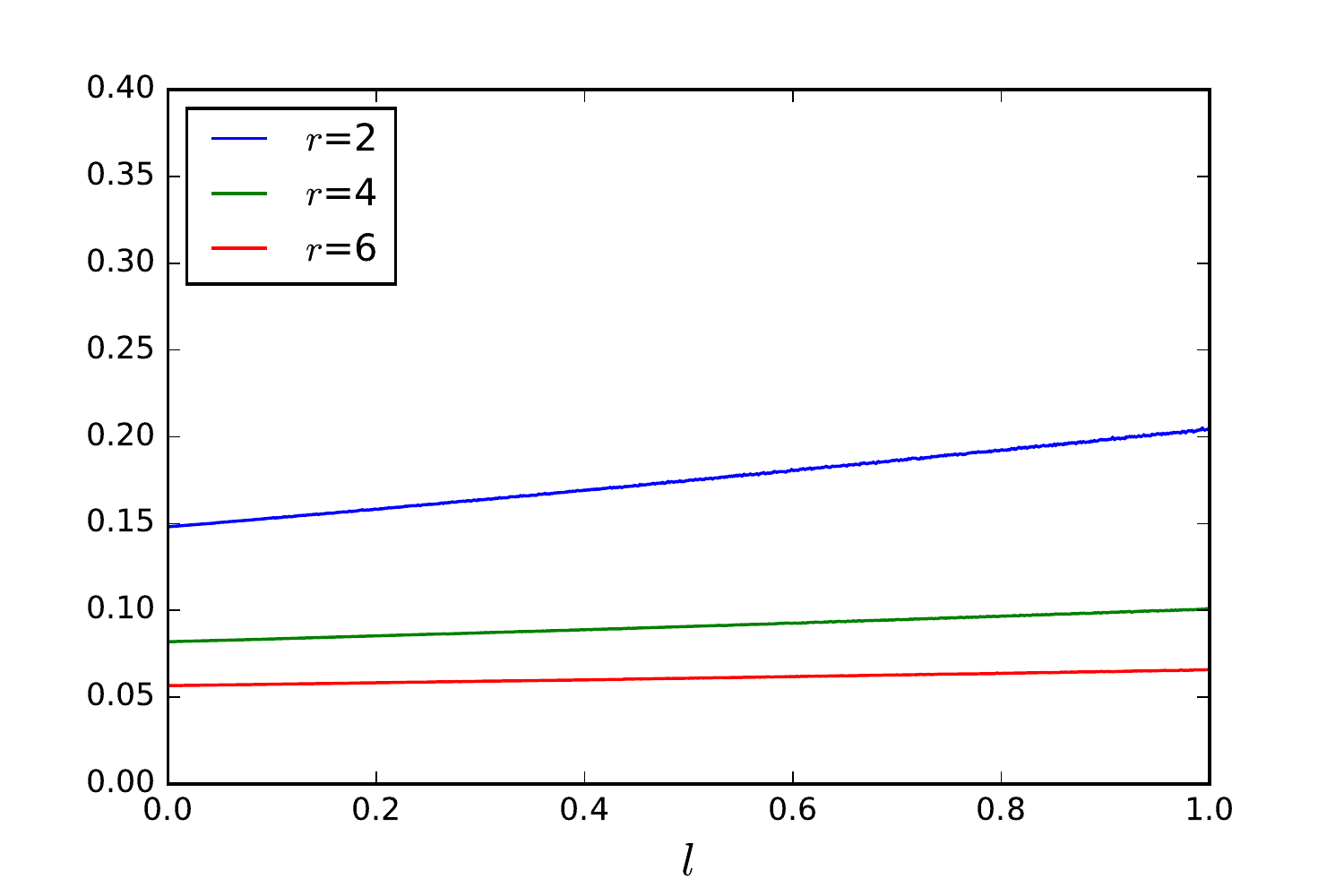}
\caption{Comparison of $\Ep_{G}[\overline{\mathcal{C}}]$ in \,\,\,\,\protect\linebreak Setting (A): the horizontal line indicates \,\,\,\,\protect\linebreak $l$ of $G$.}
\label{figure: comparison of homogeneous and heterogeneous (A)}
\end{minipage}
\begin{minipage}{0.49\hsize}
\hspace{-0.4cm}
\includegraphics[width=6cm]{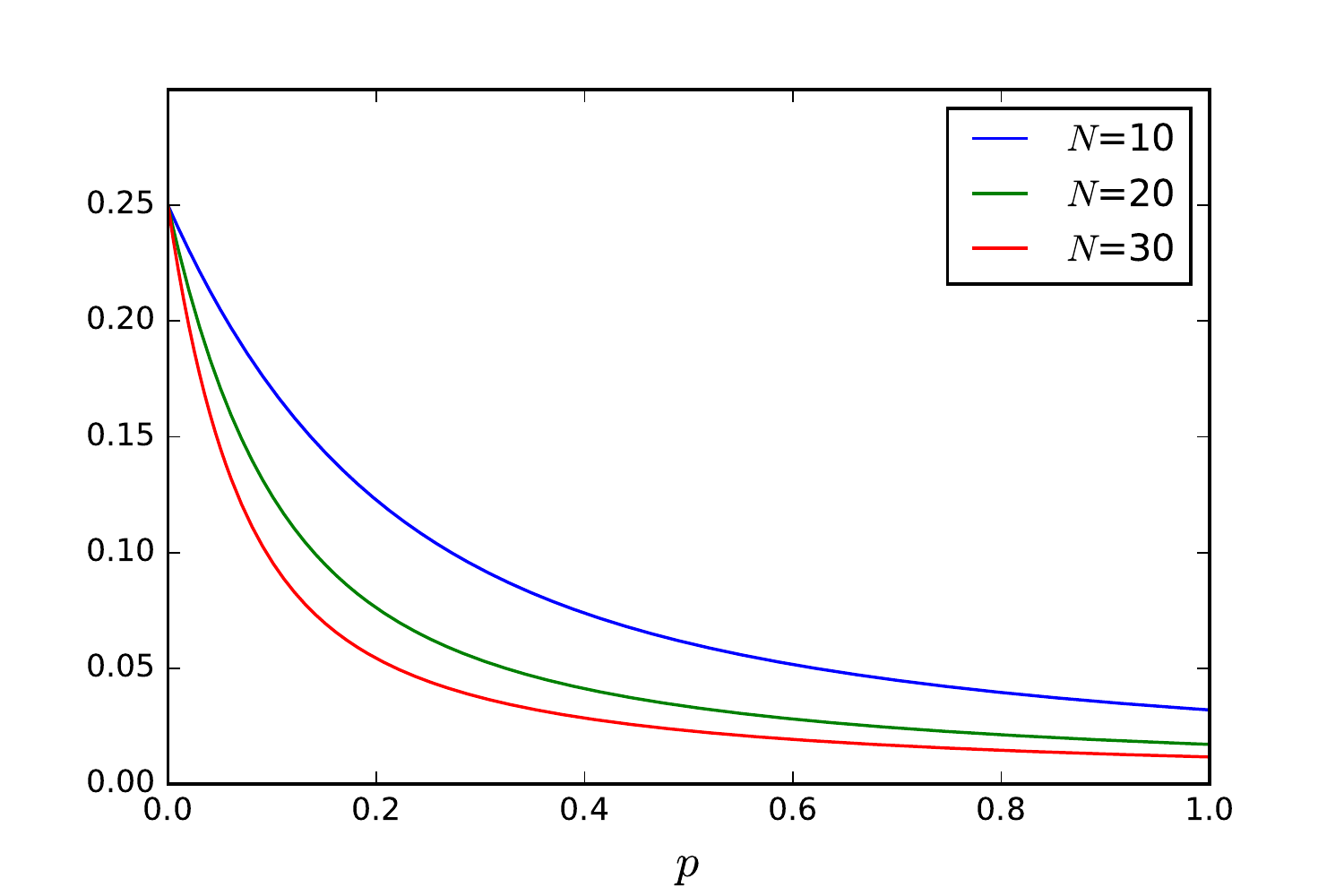}
\caption{Comparison of $\Ep_{G}[\overline{\mathcal{C}}]$ in \,\,\,\,\protect\linebreak Setting  (B): the horizontal line indicates \,\,\,\,\protect\linebreak $p$ of $G$.}
\label{figure: comparison of homogeneous and heterogeneous (B)}
\end{minipage}
\end{figure}

In Figure \ref{figure: comparison of homogeneous and heterogeneous (A)},
the vertical line indicates exact constants and the horizontal line indicates values of $l$. The blue line denotes the case with $r=2$, the green line denotes the case with $r=4$, and the red line denotes the case with $r=6$.
In Figure \ref{figure: comparison of homogeneous and heterogeneous (B)},
the vertical line indicates exact constants and the horizontal line indicates values of $p$.
The blue line denotes the case with $N=10$, the green line denotes the case with $N=20$, and the red line denotes the case with $N=30$.

Figures \ref{figure: comparison of homogeneous and heterogeneous (A)} and \ref{figure: comparison of homogeneous and heterogeneous (B)} show the exact constants of predictive minimax risks in Settings (A) and (B), respectively.
According to Figure \ref{figure: comparison of homogeneous and heterogeneous (A)},
the constant gets larger as the variance of $G$ increases.
The constant in a MCAR case approaches to that in a non-MCAR case in the limit $l\to 0$.
According to Figure \ref{figure: comparison of homogeneous and heterogeneous (B)},
the constant gets smaller as the missing probability $1-p$ gets smaller.
Further, the numerical result in Setting (B) is consistent to 
the results in \cite{Efromovich(2011),Efromovich(2013)} for the literature of nonparametric regression in the presence of missing observations.
Theorems 1 and 2 in \cite{Efromovich(2011)} provide
tight lower and upper bounds on mean integrated squared errors (MISE) in nonparametric regression with predictors missing at random.
Those theorems also provide an exact asymptotically minimax estimator for MCAR cases
and
show that if predictors are MCAR, the minimax MISE gets smaller as the missing probability approaches zero.

\section{Application to Rare mutation rates in an oncogene}
\label{rare mutation rates}

We present an application of the proposed methods to exome sequencing data from a huge database called the Exome Aggregation Consortium (ExAC). 
ExAC reports the total numbers of mutant alleles in each genetic position along the whole exome, gathered from 60706 unrelated individuals.
We focus on rare allele mutations 
in a gene PIK3CA; 
For the importance of analysing rare allele mutations and the choice of the gene, see \cite{DattaandDunson(2016)}.
We also follow the pre-process of the data described in \cite{DattaandDunson(2016)}.

We apply the sparse Poisson model to the numbers of rare mutant alleles as follows. 
We denote by $X_i$ $(i = 1\ldots,551)$ the number of rare mutant alleles in the $i$-th position. 
We assume that $X_i$ is distributed according to $\mathrm{Po}(r_i\theta_i)$.
Here $r_i$ ($i=1,\ldots,551$) is the double number of individuals whose $i$-th location are sequenced, and $\theta_i$ is the frequency rate common to individuals in the $i$-th position.
The doubling is necessary because each individual has two copies of each allele.
Since numerous fragments of DNA sequences are sampled and read at random,
$r_i$'s are different; 
hence the data is regarded as having an MCAR structure.

Our goal is to predict the behavior of rare allele mutations under the assumption that 
all individuals could be sequenced at all positions
and
the sequencing depth is uniform across the gene;
the target mutation counts $Y_i$'s are assumed to be distributed according to $\otimes_{i = 1}^n\mathrm{Po}(\overline{r}\theta_i)$
with $\overline{r}=121412=2\times 60706$.
We compare the proposed prior $\Pi[\overline{L}\hat{\eta}_{n},\kappa]$ with the two existing priors, that is, 
the Gauss hypergeometric prior in \cite{DattaandDunson(2016)} 
and the shrinkage prior in \cite{Komaki(2004)}. 
For the proposed prior,
$\kappa$ is set to 0.1 and
the estimate $\hat{s}_{n}$ to be plugged in is determined by the k-means clustering with $k=2$ (the resulting value of $\hat{s}_{n}$ is 17). 
In this study, we give the qualitative comparison using samples from predictive densities.

\begin{figure}
\centering\includegraphics[width=12cm]{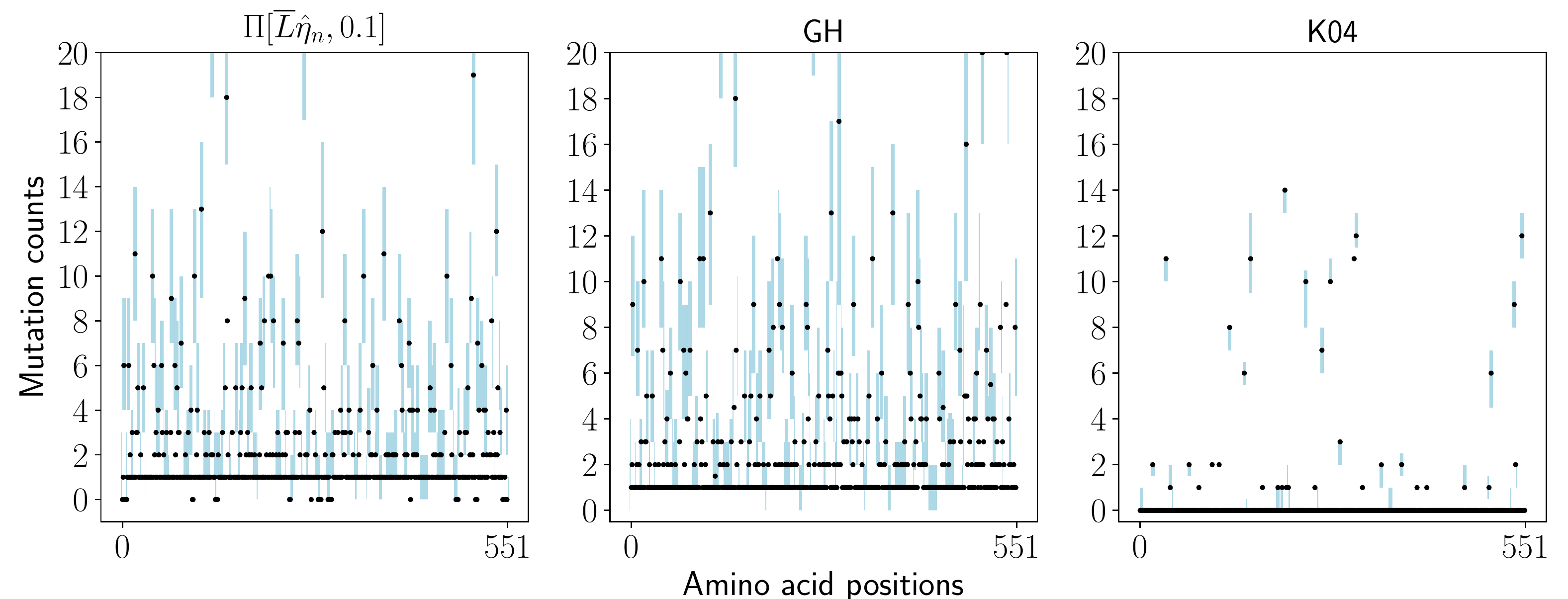}
\caption{Comparison of Bayes predictive densities in terms of marginal 50\%-prediction intervals. For each $i$-th position, the top and the bottom of the blue lines indicate 25\% and 75\% percentiles, respectively. The black points show the medians of marginal densities.}
\label{figure: exac result}
\end{figure}

Figure \ref{figure: exac result} shows marginal prediction intervals at a nominal level of 50\%. 
The prediction intervals of
the proposed predictive density $q_{\Pi[\overline{L}\hat{\eta}_{n},0.1]}$ and of the Bayes predictive density based on GH show apparently similar behaviors:
they shrink at positions whose counts are low,
and they remain the scales at positions whose counts are large.
The prediction intervals by K04 degenerate at most of the locations.
In lower regimes of counts,
there exists a dissimilarity between $q_{\Pi[\overline{L}\hat{\eta}_{n},0.1]}$ and GH.
Most of the medians of the intervals constructed by the predictive density based on GH are away from $0$; 
while those constructed by $q_{\Pi[\overline{L}\hat{\eta}_{n},0.1]}$
sometimes reach zero.
Owing to the coordinate-wise independence,
this dissimilarity is considered to reflect on
that the proposed predictive density has more flexibility than GH.

%
%
%

\end{document}